\newenvironment{bsmallmatrix}{\left[\begin{smallmatrix}}{\end{smallmatrix}\right]}
\newcommand{\Mid}{\;\middle|\;}
\newtheorem{theorem}{Theorem}[section]
\newtheorem{prop}[theorem]{Proposition}
\theoremstyle{definition}
\newtheorem{definition}[theorem]{Definition}
\newtheorem*{example}{Example}%[section]
\theoremstyle{remark}
\newtheorem*{remark}{Remark}
\newcommand{\XX}{{\mathfrak  S}}
\newcommand{\Z}{{\mathbb Z}}
\newcommand{\und}[1]{\underline{#1}}
\newcommand{\aut}[1]{{\sf Aut}(#1)}
\newcommand{\F}{{\mathbb F}}
\newcommand{\cG}{\mathcal{G}}
\newcommand{\cH}{\mathcal{H}}
\newcommand{\cK}{\mathcal{K}}
\newcommand{\sym}[1]{{\sf Sym }(#1)}
\newcommand{\cS}{{\mathcal S}}
\newcommand{\cA}{{\mathcal A}}
\newcommand{\half}{\frac{1}{2}}
\newcommand{\cJ}{\mathcal J}
\newcommand{\jaut}{\mathsf{JAut}}
\newcommand{\aaut}{\mathsf{AAut}}
\newcommand{\taut}{\mathsf{TAut}}
\newcommand{\gen}{autonomous\xspace}
\newcommand{\ingen}{non-autonomous\xspace}
\newcommand{\CC}{coherent configuration\xspace}
\newcommand{\CCs}{coherent configurations\xspace}
\newcommand{\AS}{association scheme\xspace}
\newcommand{\ASs}{association schemes\xspace}
\newcommand{\JC}{Jordan configuration\xspace}
\newcommand{\JS}{Jordan scheme\xspace}
\newcommand{\JSs}{Jordan schemes\xspace}
\newcommand{\WL}{\operatorname{WL}}
\newcommand{\restr}{\mathord{\upharpoonright}}
\author[M.\,Muzychuk]{Mikhail Muzychuk}
\address[M.\,Muzychuk]{Ben Gurion University of the Negev, Beer Sheva, Israel}
\email{muzychuk@bgu.ac.il}
\author[Ch.\,Pech]{Christian Pech}
\address[Ch.\,Pech]{Institute of Mathematics\\Czech Academy of Sciences\\\v{Z}itn\'a 25\\ 115\,67 Praha 1\\ Czech Republic}
\email{pech@math.cas.cz}
\thanks{The research of the second author was supported by GA~\v{C}R (Czech Science Foundation) grant EXPRO 20-31529X}
\author[A.\,Woldar]{Andrew Woldar}
\address[A.\,Woldar]{Villanova University, Villanova, PA, USA}
\email{andrew.woldar@villanova.edu}
\title{Classification of thin Jordan schemes}
\subjclass{17C50, 20N05, 05E30, 05E16, 16W10}
\keywords{Jordan algebra, thin Jordan scheme, coherent configuration, Moufang loop, RA-loop, autonomous}
\date{}
\begin{document}

	\begin{abstract}
		Jordan schemes generalize association schemes in a manner similar to how Jordan algebras generalize associative algebras. It is well known that association schemes of maximum rank are in one-to-one correspondence with groups (so-called thin schemes). In this paper, we classify Jordan schemes of maximum rank-to-order ratio and show that regular Jordan schemes of such type correspond to a special class of Moufang loops known as ring-alternative loops. 
	\end{abstract}
	\maketitle

    \section{Introduction}

    In 1959, Shah proposed \cite{Sha59} to use the Jordan matrix product as an algebraic tool for the analysis of block designs. This was a far-reaching generalization of the concept of an association scheme introduced in the same year by Bose and Mesner \cite{BosMes59}. The concept was further developed and led to a flourishing research area called ``Algebraic Combinatorics" by Bannai and Ito in their famous monograph~\cite{BanIto84}. Nevertheless, Shah's idea went virtually unnoticed until Bailey published her book~
    \cite{Bai04} where she analyzed the ``first" properties of objects arising from Shah's idea. In particular, she showed that the symmetrization of an association scheme always produces objects introduced by Shah. In 2001, Cameron called these objects Jordan schemes and posed the question, ``Is every symmetric Jordan scheme a symmetrization of an association scheme?'' In 2019,  Klin, Muzychuk and Reichard gave a negative answer to Cameron's question and started a systematic investigation of Jordan schemes and, more generally, coherent Jordan configurations \cite{KliMuzRei19}.

    It is well known that all finite groups form a special subclass of association schemes, called thin schemes \cite{Zie05}. Such schemes may be characterized as those that have a maximum rank-to-order ratio among all association schemes. In this paper, we classify all Jordan schemes that attain maximum rank-to-order ratio. We further show that regular Jordan schemes with this property are in a one-to-one correspondence with a special class of Moufang loops. We direct the reader to \cite{Bru71} for detailed information about various types of loops, e.g. Moufang loops, Bol loops, etc.

    The paper is organized as follows. Section 2 contains all necessary definitions and basic facts about coherent Jordan configurations and Jordan schemes. Section 3 is the most central part of our paper. There we show that the rank-to-order ratio of a \JS is bounded by $3/2$ with equality occurring if and only if the scheme is thin and nonregular. All such Jordan schemes are classified in Subsection 3.1. Subsection 3.2 treats thin regular Jordan schemes. There we establish a one-to-one correspondence between such schemes and ring-alternative Moufang loops (RA-loops). Finally, in Section 4 we show that Jordan schemes corresponding to RA-loops are \gen, that is to say, they are not algebraic fusions of coherent configurations. 

Thus we describe in Section 4 the first infinite family of \gen Jordan schemes. Previously, the only known \gen Jordan scheme was the one on $15$ points described in \cite{MuzReiKli22} (see also \cite[Section 4]{KliMuzRei19}) related to one of the two Siamese $\mathrm{STS}(15)$ constructed in \cite[Section 4]{KliReiWol05}, see also \cite{KliReiWol09}. The fact that this \JS is \gen was established using the COCO2P package \cite{COCO2P} in GAP \cite{GAP4} and Ziv-Av's catalog of small coherent configurations \cite{Ziv18}.
    
    \section{Background Material}
    
	\subsection{Binary relations and matrices} Let $\Omega$ be a nonempty finite set. A \emph{binary relation} on $\Omega$ is an arbitrary subset $s\subseteq \Omega\times\Omega$. We refer to the set ${\omega} s:=\{\beta\in\Omega \mid (\omega,\beta)\in s\}$ as the \emph{$s$-neighborhood} of $\omega\in\Omega$. A relation is called \emph{regular} if $|\omega s|$ does not depend on $\omega$. In such case, we refer to the constant $n_s:=|\omega s|$ as the \emph{valency} of $s$.   
	If $s$ is a regular relation of valency one, then ${\omega}s$ contains a unique element which we denote as $s(\omega)$, i.e.\ ${\omega} s=\{s(\omega)\}$. Thus $s$ is a function on $\Omega$. All functions in this paper, including permutations from $\sym{\Omega}$, will be considered as binary relations. 
	
	Every relation $s$ admits a \emph{transposed} 
	relation $s^t=\{(\alpha,\beta)\in\Omega^2 \mid(\beta,\alpha)\in s\}$.  Throughout, we will denote the unary operator that transposes relations by $\tau$, that is, $\tau:s\mapsto s^t$. Following~\cite{ChePon24} we call a relation $s\subseteq \Omega\times\Omega$ {\it thin} if $|\omega s|\leq 1\geq |\omega s^t|$ holds for each $\omega\in\Omega$.
    
    We write $ab$ for the relational product of $a,b\subseteq\Omega^2$ and $a\star b = ab\cup ba$ for their \emph{Jordan product}. For $\Delta\subseteq \Omega$,  
    the \emph{diagonal relation} $1_\Delta$ is defined by $1_\Delta:=\{(\delta,\delta) \mid\delta\in\Delta\}$.
	The \emph{adjacency matrix} of $s\subseteq\Omega^2$ is the square matrix, denoted $\und{s}$, whose rows and columns are labeled by the elements of $\Omega$ and whose $(\alpha,\beta)$-entry $\und{s}_{\alpha,\beta}$ equals one if $(\alpha,\beta)\in s$ and zero otherwise.
    
	The algebra of all $\Omega\times\Omega$ matrices over a field  $\mathbb{F}$ will be denoted as $M_\Omega(\mathbb{F})$. It is always assumed that $\mathsf{char}(\F)\neq 2$. The standard product of two matrices $A,B\in M_\Omega(\mathbb{F})$ will be written as $A\cdot B$ or $AB$, while their Jordan product $\half (AB+BA)$ will be denoted by $A\star B$. The Schur-Hadamard (or entry-wise) product will be denoted as $A\circ B$. The identity matrix and all-ones matrix will be denoted as $I_\Omega$ and $J_\Omega$, respectively.
    The linear span of any set of matrices ${\mathcal S}\subseteq M_\Omega(\F)$ will be denoted by $\mathbb{F}[\mathcal{S}]$. In the case where $\mathcal{S}$ is given as a family $\{A_i\}_{i\in I}$, we also write $\langle A_i\rangle_{i\in I}$ for $\mathbb{F}[\mathcal{S}]$. 
    
	\subsection{Rainbow algebras}
	Consider a vector subspace $\cA\subseteq M_\Omega(\mathbb{F})$ which is closed with respect to $\tau$ and $\circ$ and contains $I_\Omega, J_\Omega$.
    Every such subspace $\cA\subseteq M_\Omega(\mathbb{F})$ has a uniquely determined \emph{standard basis}, namely the basis consisting of all minimal $\circ$-idempotents of the algebra $(\cA,\circ)$. These are $\{0,1\}$-matrices $A_1,\dots,A_r\in\cA$ that are pairwise $\circ$-orthogonal. Each $A_i$ is the adjacency matrix of a unique binary relation $s_i\subseteq\Omega^2$, i.e.\ $A_i = \und{s}_i,i=1,\dots,r$. The set $S=\{s_1,\dots,s_r\}$ forms a partition of $\Omega^2$ that satisfies the following two conditions: 
	\begin{enumerate}[(a)]
			\item $1_\Omega$ is a union of some relations  of $S$;
			\item $S^t = S$, where $S^t:=\{s^t \mid s\in S\}$;
	\end{enumerate}
	Following \cite{Hig90} we call a pair $\mathfrak{S}=(\Omega,S)$  a \emph{rainbow} if it satisfies these conditions. 
	The elements of $S$ are called \emph{basic relations} of the rainbow. The cardinalities $|\Omega|$ and $|S|$ are called the \emph{order} and \emph{rank} of the rainbow, respectively. Observe that a rainbow has rank one if and only if $|\Omega|=1$. For $|\Omega|\ge 2$ one has $|S|\geq 2$, where equality is attained if and only if $S=\{1_\Omega,\,\Omega^2\setminus 1_\Omega\}$. Such rainbows
	are called \emph{trivial}. The \emph{relation matrix} of a rainbow ${\mathfrak S} = (\Omega,S)$ is the $\Omega\times\Omega$ matrix $A({\mathfrak S})$ defined via $A({\mathfrak S})_{\alpha,\beta} = S(\alpha,\beta)$ where $S(\alpha,\beta)$ is the unique basic relation containing $(\alpha,\beta)\in\Omega^2$.
    
    A rainbow of maximum rank $|\Omega|^2$ is called \emph{discrete}. 
    Each basic relation of a discrete rainbow contains a unique ordered pair.
	
	In what follows, we call a subspace $\cA\subseteq M_\Omega(\mathbb{F})$ a \emph{rainbow algebra} if it contains $I_\Omega,J_\Omega$ and is closed with respect to $\tau$ and $\circ$. We say $\cA$ is \emph{symmetric} if every $A\in \cA$ symmetric, i.e.\ $A^t=A$ for every $A\in \cA$. We call $\cA$ \emph{homogeneous} if $\langle I_\Omega \rangle\circ\cA\subseteq \langle I_\Omega \rangle$ and \emph{regular} if $\langle J_\Omega\rangle\cdot\cA\subseteq \langle J_\Omega\rangle$. 
	
	Given a rainbow $\mathfrak{S}=(\Omega,S)$, the linear span $\mathbb{F}[S]:=\langle \und{s}\rangle_{s\in S}\subseteq M_\Omega(\mathbb{F})$ is a rainbow algebra. Thus, there exists a one-to-one correspondence
	between rainbows and rainbow subalgebras of $M_\Omega(\mathbb{F})$. The statement below collects some properties of rainbow algebras.
	\begin{prop}%\label{SH-algebras} 
    Let $\mathfrak{S}=(\Omega,S)$ be a rainbow, and let $\cA=\langle \und{s}\rangle_{s\in S}$ be the corresponding rainbow algebra. Then
		\begin{enumerate}[(a)]
			\item $\cA$ is symmetric iff all basic relations of $\mathfrak{S}$ are symmetric;
			\item $\cA$ is homogeneous iff $1_\Omega\in S$;
			\item $\cA$ is regular iff all basic relations of $\mathfrak{S}$ are regular.
		\end{enumerate}
	\end{prop}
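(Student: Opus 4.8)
The unifying idea is that all three module-type conditions are linear in their matrix argument while the target sets $\cA$, $\langle I_\Omega\rangle$, $\langle J_\Omega\rangle$ are subspaces; hence each condition may be checked on the standard basis $\{\und{s}\}_{s\in S}$ alone, and the whole proof reduces to understanding how transposition, $\circ$-multiplication by $I_\Omega$, and left multiplication by $J_\Omega$ act on the $0,1$-matrices $\und{s}$. Throughout I would use that these matrices have pairwise disjoint supports partitioning $\Omega^2$.

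For (a), the plan is to note $\und{s}^t=\und{s^t}$, so $\cA$ is symmetric iff each basis matrix satisfies $\und{s}=\und{s}^t$, i.e. iff $s=s^t$ for every $s\in S$; this is exactly symmetry of all basic relations. For (b), observe that $I_\Omega\circ\und{s}$ retains precisely the diagonal pairs of $s$. By rainbow axiom (a), $1_\Omega$ is a union of basic relations, and since the basic relations partition $\Omega^2$ each $s$ is either contained in $1_\Omega$ or disjoint from it; hence $I_\Omega\circ\und{s}=\und{s}$ when $s\subseteq 1_\Omega$ and $I_\Omega\circ\und{s}=0$ otherwise. If $1_\Omega\in S$, then the only diagonal basic relation is $1_\Omega$ itself, so for any $A=\sum_s c_s\und{s}$ we get $I_\Omega\circ A=c_{1_\Omega}I_\Omega\in\langle I_\Omega\rangle$, proving homogeneity. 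Conversely, homogeneity forces $\und{s}=I_\Omega\circ\und{s}\in\langle I_\Omega\rangle$ for every diagonal relation $s\subseteq 1_\Omega$; but a nonzero $0,1$-matrix supported on the diagonal lies in $\langle I_\Omega\rangle$ only if it equals $I_\Omega$, so $s=1_\Omega$ is the unique diagonal relation and $1_\Omega\in S$.

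For (c), I would compute the entries of $J_\Omega\cdot\und{s}$: the $(\alpha,\beta)$-entry equals the column sum $\sum_\gamma \und{s}_{\gamma,\beta}=|s^t\beta|$, so every column of $J_\Omega\cdot\und{s}$ is constant and $J_\Omega\cdot\und{s}\in\langle J_\Omega\rangle$ iff $|s^t\beta|$ is independent of $\beta$, that is, iff $s^t$ is regular. Thus $\cA$ is regular iff $s^t$ is regular for all $s\in S$; invoking rainbow axiom (b), $S^t=S$, so $s\mapsto s^t$ permutes $S$ and this is equivalent to regularity of every basic relation.

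The computations are elementary; the only genuine care needed is the left/right bookkeeping in (c), where left multiplication by $J_\Omega$ detects regularity of $s^t$ rather than of $s$, so one must use $S^t=S$ to translate back to the basic relations themselves. In (b) the point to watch is that the rainbow axiom only guarantees $1_\Omega$ to be a union of diagonal basic relations, and it is precisely homogeneity that collapses this union to the single relation $1_\Omega$.
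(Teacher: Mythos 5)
Your proof is correct. The paper states Proposition~\ref{SH-algebras} without proof, treating it as routine, and your argument is the natural verification: reduce each module-type condition to the standard basis by linearity (legitimate since each $\und{s}\in\cA$), and handle the only two delicate points correctly --- in (b), that homogeneity collapses the union of diagonal basic relations guaranteed by the rainbow axiom to the single relation $1_\Omega$ (using that a nonzero $0,1$-matrix in $\langle I_\Omega\rangle$ must equal $I_\Omega$), and in (c), that left multiplication by $J_\Omega$ computes column sums and hence detects regularity of $s^t$ rather than of $s$, which the rainbow axiom $S^t=S$ translates back to regularity of every basic relation.
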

		In what follows, we call a rainbow \emph{symmetric}, \emph{homogeneous}, or \emph{regular} depending on which of those properties its corresponding algebra satisfies. Note that regularity of a rainbow always implies homogeneity but not vice versa.
        A rainbow will be called \emph{thin} if all of its basic relations are thin.

    Given two rainbows $\mathfrak{S}=(\Omega,S)$ and $\mathfrak{S'}=(\Omega,S')$, we say that $\mathfrak{S}$ is a \emph{fusion} of $\mathfrak{S'}$ (equivalently, $\mathfrak{S'}$ is a \emph{fission} of $\mathfrak{S}$) if $\mathbb{F}[S]\subseteq\mathbb{F}[S']$. In this case, we also write $S\sqsubseteq S'$.
		
    Two rainbows $\mathfrak{S}=(\Omega,S)$ and $\mathfrak{S'}=(\Omega',S'
    )$ are said to be \emph{(combinatorially) isomorphic} if there exists a bijection $f:\Omega\rightarrow \Omega'$ satisfying $f(S)=S'$, where $f(S):=\{f(s) \mid s\in S\}$ and $f(s):=\{(f(\alpha),f(\beta)) \mid  (\alpha,\beta)\in s\}$.
    
	\subsection{Coherent Jordan algebras}
		
	A rainbow algebra is called a \emph{coherent algebra} (see \cite{Hig90}) if it is closed with respect to standard matrix multiplication. Similarly, a rainbow algebra will be called a \emph{coherent Jordan algebra} (coherent J-algebra, for short) if it is closed with respect to Jordan multiplication $\star$. 
	
	Let $\mathfrak{S}=(\Omega,S)$ be a rainbow, and let $\cA:=\langle \und{s}\rangle_{s\in S}$ be its corresponding rainbow algebra. We say $\mathfrak{S}$ is a \emph{coherent configuration} (resp.\ \emph{J-configuration}) if $\cA$ is a coherent algebra (resp.\ J-algebra). In both cases, the algebra $\cA$ will be referred to as the \emph{adjacency algebra} of $\mathfrak{S}$.

    If $\mathfrak{S}=(\Omega,S)$ is a coherent configuration, then for any pair $s,r\in S$ we have $\und{s}\cdot\und{r} = \sum_{t} p_{s,r}^t\und{t}$ for suitable nonnegative integers $p_{s,r}^t$. We refer to these $p_{s,r}^t$
    as {\it structure constants} of $\mathfrak{S}$. Due to their combinatorial meaning $p_{s,r}^t=|\alpha s\cap \beta r^t|, (\alpha,\beta)\in t$, they are also called \emph{intersection numbers} of $\mathfrak{S}$.

Similarly, one can define intersection numbers $p_{\{s,r\}}^t$ for a coherent J-configuration as follows: $p_{\{s,r\}}^t=|\alpha s\cap \beta r^t| + |\alpha r\cap \beta s^t|, (\alpha,\beta)\in t$.
These also appear as structure constants for the Jordan product $\und{s}\star\und{r} = \frac{1}{2}\sum_{t} p_{\{s,r\}}^t\und{t}$.
(Note that our notation $p_{\{s,r\}}^t$ reflects the fact that $\star$ is commutative.)

	Recall that a homogeneous coherent configuration is called an \emph{association scheme} (see \cite{Zie05}). Following this pattern, we will call a homogeneous coherent J-configuration a \emph{Jordan scheme}. While it is well known that association schemes are always regular, Jordan schemes do not possess this property in general. It was shown in~\cite{MuzReiKli22} that a symmetric \JS is always regular. Regarding nonsymmetric Jordan schemes, they are not necessarily regular, but have a very special structure --- see \cite[Proposition 2.2]{MuzReiKli22}. Examples of nonregular Jordan schemes also appear in Subsection~\ref{nonreg} of this paper.
	\subsection{Coherent J-closure}
    It follows immediately from the definition of a coherent J-algebra that the intersection of any set of coherent J-algebras defined on $\Omega$ is again a coherent J-algebra on $\Omega$. This observation leads to the notion of the \emph{coherent J-closure} $J(\mathcal{S})$ of a set $\mathcal{S}\subseteq M_\Omega(\mathbb{F})$ of matrices as the intersection of all coherent J-algebras containing $\mathcal{S}$. 
    
    The coherent J-closure generalizes the well-known concept of the \emph{coherent closure} of $\mathcal{S}$ which is the intersection of all coherent algebras containing $\mathcal{S}$. It is often called the WL-closure in honor of Weisfeiler and Leman \cite{WeiLem68}, who designed an algorithm for its computation. For this reason, we will denote this closure by $\WL(\mathcal{S})$. Observe that we always have $J(\mathcal{S})\subseteq \WL(\mathcal{S})$.
    If $S$ is a set of binary relations on $\Omega$, then we write $J(S)$ and $\WL(S)$ for the underlying rainbows of $J(\{\und{s}\}_{s\in \mathcal{S}})$ and $\WL(\{\und{s}\}_{s\in \mathcal{S}})$, respectively.
    
	\subsection{Algebraic fusions of J-configurations}

	Given a J-configuration $\XX=(\Omega,S)$, a permutation $\alpha\in\sym{S}$ is called an \emph{algebraic J-automorphism} of $\XX$ if its linear extension to $\mathbb{C}[S]$ is an automorphism of the Jordan algebra $(\mathbb{C}[S],\star)$. All algebraic J-automorphisms of $\XX$ form a subgroup of $\sym{S}$ that will be denoted by $\jaut(\XX)$. Note that the transpose permutation $\tau: a\mapsto a^t$ is always an algebraic J-automorphism of $\XX$. The group $\jaut(\XX)$ consists of all permutations of $\sym{S}$ that preserve the tensor of structure constants of Jordan multiplication with respect to the standard basis. In other words, a permutation $u\mapsto u'$ of $S$ belongs to $ \jaut(\XX)$ if and only if  
	\[
    \forall_{u,v,w\in S}:\  p_{\{u',v'\}}^{w'}=p_{\{u, v\}}^{w}.
    \]

    The following statement collects certain properties of subgroups of $\jaut(\XX)$.
    
	\begin{prop}%\label{alg-aut}
		Each of the following holds:
		\begin{enumerate}[(a)]
			\item The transpose permutation $\tau$ belongs to the center of $\jaut(\XX)$;
			\item For any $\Phi\leq \jaut(\XX)$ and $s\in S$ the relation $\Phi(s):=\bigcup_{\phi\in\Phi}\phi(s)$ is either symmetric or anti-symmetric;
			\item For any $\Phi\leq \jaut(\XX)$, $\bigl(\Omega,\{\Phi(s)\mid s\in S\}\bigr)$ is a \JC.
		\end{enumerate}
	\end{prop}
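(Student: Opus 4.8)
The plan is to prove the three parts in the stated order, since (b) and (c) both rely on the centrality established in (a). The guiding principle is that an element of $\jaut(\XX)$ sees nothing beyond the integer Jordan structure constants $p_{\{u,v\}}^{w}$, so the whole argument reduces to expressing the transpose operation and the reflexive basic relations in terms of these constants alone. For (a) I first recall that $\tau\in\jaut(\XX)$: transposing $\und s\star\und r=\frac{1}{2}\sum_t p_{\{s,r\}}^t\und t$ and using $(\und s)^t=\und{s^t}$ gives $p_{\{s,r\}}^t=p_{\{s^t,r^t\}}^{t^t}$, which is exactly the defining condition for the permutation $s\mapsto s^t$. To prove $\tau$ central it suffices to show that every $\psi\in\jaut(\XX)$ satisfies $\psi(s^t)=\psi(s)^t$ for all $s\in S$, for then $\psi\tau\psi^{-1}=\tau$ for every $\psi$.

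The commutation $\psi(s^t)=\psi(s)^t$ rests on two intrinsic descriptions. First, the reflexive basic relations are $\jaut$-invariant: for a basic relation $s$ one has $p_{\{s,s\}}^{t}=2(\und s^2)_{\alpha\beta}$ for $(\alpha,\beta)\in t$, so the conditions $p_{\{s,s\}}^{s}=2$ and $p_{\{s,s\}}^{t}=0\ (t\neq s)$ together say exactly $\und s^2=\und s$ over $\Z$; viewing $\und s$ as a rational idempotent then gives $\trace(\und s)=\rk(\und s)\geq 1$, so $s$ meets $1_\Omega$, and since $1_\Omega$ is a union of basic relations this forces $s=1_\Delta$ for a fibre $\Delta$. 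The converse is clear, so ``$s$ is reflexive'' is a condition on the constants $p_{\{s,s\}}^{\bullet}$ and is therefore preserved by every $\psi$. Second, from $(\und s\,\und r)_{\alpha\beta}=|\alpha s\cap\beta r^t|$ one sees that the diagonal of $\und s\star\und r$ is nonzero exactly when $r=s^t$; equivalently, $s^t$ is the \emph{unique} $r\in S$ with $p_{\{s,r\}}^{e}>0$ for some reflexive basic relation $e$. Applying $\psi$ to the valid instance $p_{\{s,s^t\}}^{e}>0$, and using that $\psi$ preserves both the constants and the reflexive relations, uniqueness yields $\psi(s^t)=\psi(s)^t$, which proves (a).

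For (b), the $\Phi$-orbits partition $S$, and by (a) every $\phi\in\Phi$ commutes with $\tau$, so $\Phi(s)^t=\bigcup_{\phi}(\phi s)^t=\bigcup_{\phi}\phi(s^t)=\Phi(s^t)$. The orbits of $s$ and of $s^t$ are either equal or disjoint: in the first case $\Phi(s)^t=\Phi(s^t)=\Phi(s)$, so $\Phi(s)$ is symmetric; in the second $\Phi(s)\cap\Phi(s)^t=\Phi(s)\cap\Phi(s^t)=\emptyset$, so $\Phi(s)$ is anti-symmetric. For (c), set $S/\Phi:=\{\Phi(s):s\in S\}$; these relations partition $\Omega^2$, and the orbit sums $\und{\Phi(s)}=\sum_{r\in\Phi\text{-orbit of }s}\und r$ form a basis of the $\Phi$-fixed subspace of $\F[S]$, so $\F[S/\Phi]=\F[S]^{\Phi}$. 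Since each $\phi\in\Phi$ acts as a $\star$-automorphism of $\F[S]$ (it preserves the structure constants), this fixed space is closed under $\star$; it contains $I_\Omega$ (the $\Phi$-invariant sum of the reflexive basic relations) and $J_\Omega=\sum_s\und s$, is closed under ${}^t$ because $\Phi(s)^t=\Phi(s^t)$, and is closed under $\circ$ with the pairwise $\circ$-orthogonal $\{0,1\}$-matrices $\und{\Phi(s)}$ as its minimal idempotents. Hence $\F[S/\Phi]$ is a rainbow algebra closed under $\star$, i.e.\ a coherent J-algebra, and $(\Omega,S/\Phi)$ is a JC.

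The main obstacle lies entirely in (a): producing characterizations of ``reflexive'' and of ``transpose'' that use only the Jordan structure constants, without the associative product $\cdot$, which a J-configuration need not possess. Once the reflexive basic relations are pinned down by the idempotent condition $\und s^2=\und s$, and the transpose by the appearance of a reflexive relation on the diagonal of $\und s\star\und r$, parts (b) and (c) become formal orbit- and fixed-point bookkeeping.
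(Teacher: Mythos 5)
Your proof is correct and follows essentially the same route as the paper: both arguments hinge on the observation that $s=r^t$ is detected by a nonzero diagonal of $\und{s}\star\und{r}$ (equivalently, $p_{\{s,r\}}^e>0$ for some reflexive basic $e$), which is preserved by any $\psi\in\jaut(\XX)$, giving centrality of $\tau$, after which (b) and (c) are the same orbit and fixed-subspace bookkeeping as in the paper. The only divergence is your idempotency-and-trace characterization of the reflexive basic relations, which is a correct but more laborious substitute for the paper's shortcut that every Jordan-algebra automorphism fixes the unit $I_\Omega$ and hence permutes the diagonal basic relations.
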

	\begin{proof}
		{\sl Proof of \ref{conj_a}:} One readily checks that for any pair $r,s\in S$, 
		\[
        s=r^t\iff (\und{s}\star\und{r})\circ I_\Omega\neq O_\Omega  \mbox{ (the $\Omega\times\Omega$ zero matrix}).
        \]
		Since any $\phi\in\jaut(\XX)$ preserves $I_\Omega$, we conclude that
		\[
		(\und{s}\star\und{r})\circ I_\Omega\neq O_\Omega\iff (\und{\phi(s)}\star\und{\phi(r)})\circ I_\Omega\neq O_\Omega,
		\]
		thereby implying
		that $s=r^t\iff \phi(s)=\phi(r)^t$. 
		\smallskip
		
		{\sl Proof of \ref{conj_b}:}   
		It follows from part (a) that
		\[
		(\Phi(s))^t = \bigcup_{\phi\in\Phi}\phi(s)^t = \bigcup_{\phi\in\Phi}\phi(s^t) = \Phi(s^t).
		\]
		If $s^t = \phi(s)$ for some $\phi\in\Phi$, then $\Phi(s^t) = \Phi(\phi(s))=\Phi(s)$, which implies $\Phi(s)$ is symmetric.
		If, on the other hand, $s^t\neq \phi(s)$ for every $\phi\in\Phi$, then $\psi(s^t)\neq \phi(s)$ and consequently $\psi(s^t)\cap \phi(s) = \emptyset$ for every pair $\psi,\phi\in\Phi$. Therefore 
		$\Phi(s^t)\cap \Phi(s) = \bigcup_{\psi,\phi\in\Phi} (\psi(s^t)\cap\phi(s)) = \emptyset$, whereby  $\Phi(s)$ is anti-symmetric in this case.
		\smallskip
		
		{\sl Proof of \ref{conj_c}:} Denote $\cA:=\mathbb{C}[S]$ and extend each $\phi\in\Phi$ on $\cA$ linearly via $\phi(\und{s}) := \und{\phi(s)}$. We will use the same notation for the induced linear operator on $\cA$. Then
        %Since $\phi$ is an  automorphism of $(\cA,\star,\circ,{}^t)$,
	    % we conclude that 
	    \[
        \phi(X\star Y) = \phi(X)\star\phi(Y),\;
	    \phi(X\circ Y) = \phi(X)\circ\phi(Y),\; \phi(X^t) = \phi(X)^t. 
        \]
	    Since $\phi(I_\Omega)=I_\Omega$ and $\phi(J_\Omega)=J_\Omega$,  the  subspace ${}^\Phi\!\!\cA = \{X\in\cA \mid  \forall_{\phi\in\Phi}\ \phi(X) = X\}$ is a coherent Jordan algebra. 	    
	    It is easy to verify that the matrices $\{ \und{\Phi(s)}\}_{s\in S}$  form the standard basis of ${}^\Phi\!\!\cA$.
    \end{proof}
	It follows from above that given any $\Phi\leq \jaut(\XX)$, the pair ${}^{\Phi}{\XX}:=(\Omega,\{\Phi(s)\}_{s\in S})$ is also a \JC which we will refer to as an \emph{algebraic fusion} of $\XX$.
	
	\subsection{Constructing Jordan configurations from coherent ones}
	
	If $\XX=(\Omega,S)$ is a 
	\CC, then it is also a \JC. Recall that a permutation $\phi$ of $S$ is called an \emph{algebraic automorphism} (\emph{anti-automorphism}) of $\XX$ if its linear extension to $\mathcal{A}:=\langle \und{s}\rangle_{s\in S}$ is an automorphism (anti-automorphism) of $(\mathcal{A},\cdot)$. The group $\aaut(\XX)$ of all algebraic automorphisms of $\XX$ 
	is contained in $\jaut(\XX)$. Also, $\tau\in\jaut(\XX)$. Every anti-automorphism of $\XX$ is a composition of $\tau$ and some $\phi\in\aaut(\XX)$. All automorphisms and anti-automorphisms of $\XX$ form a subgroup of $\jaut(\XX)$ which will be denoted by $\taut(\XX)$. If $\XX$ is commutative, then $\aaut(\XX)=\taut(\XX)=\jaut(\XX)$. If however $\XX$ is not commutative, then  $\tau\not\in\aaut(\XX)$ and $[\taut(\XX)\!:\!\aaut(\XX)]=2$ whence $\taut(\XX) = \aaut(\XX)\times\langle \tau\rangle$.
	In general, the inclusion $\taut(\XX)\leq\jaut(\XX)$ may be proper. The smallest examples of this are $\mathrm{CC}(8,31)$ and $\mathrm{CC}(8,32)$ of order $8$. (Enumeration of \CCs here is taken from Ziv-Av's catalog \cite{Ziv18} of small coherent configurations.) %These are also accessible in GAP \cite{GAP4} via the COCO2P package \cite{COCO2P}.)
	
	If $\XX=(\Omega,S)$ is a thin association scheme, then $S$ is a regular subgroup of $\sym{\Omega}$. In this case a permutation $\phi\in\sym{S}$ belongs to $\jaut(\XX)$ if and only if it satisfies $\phi(st)\in\{\phi(s)\phi(t),\phi(t)\phi(s)\}$ for all $s,t\in S$. It was shown in \cite{Sco57} that any such map is either an automorphism or anti-automor\-phism.
	Thus for thin \ASs one always has $\taut(\XX)=\jaut(\XX)$.
	
	We call a \JC $\mathfrak{S}=(\Omega,S)$ \emph{\ingen} if it is an algebraic fusion  of a coherent configuration $\mathfrak{C}=(\Omega,C)$ via a subgroup $\Phi\leq \jaut(\mathfrak{C})$. A \JC will be called \emph{\gen} otherwise, that is, if it cannot be obtained as an algebraic fusion of some \CC sharing the same point set. 
    
    Note that in \cite{MuzReiKli22} Jordan schemes were divided into two types: \emph{proper} and \emph{improper}. An improper Jordan scheme is one that can be realized as a symmetrization of an \AS. Otherwise it is proper. Under this definition, any nonsymmetric \JS is automatically proper. 
    Regarding symmetric Jordan schemes, there are proper ones that are \ingen.  We are able to show that there are no examples of proper non-autonomous Jordan schemes of order less than $16$. In fact, the smallest examples of such schemes known to us are of order $24$ as described below.
    
    \begin{example}
    Computer calculations show that each of $\text{AS}(24,526)$, $\text{AS}(24,597)$, $\text{AS}(24,669)$ has two algebraic fusions that are proper symmetric Jordan schemes. Hence, it follows that these schemes are non-autonomous. Enumeration of \ASs here is taken from Hanaki and Miyamoto's   catalog \cite{HanMiy03} of small association schemes. These are also accessible in GAP \cite{GAP4} as well as in the COCO2P package \cite{COCO2P}. 
    \end{example}
    It appears that the proper \JSs constructed in \cite{MuzReiKli22} are also autonomous, but we have yet to prove this. We have however verified this for the smallest example of order $15$ constructed there.
	
	\section{Thin Jordan schemes}

    It is well known that each basic graph of an association scheme $(\Omega,S)$ is regular. Therefore we always have that $|S|\leq |\Omega|$ where equality holds if and only if each basic relation $s\in S$ is a permutation on $\Omega$. Such a scheme is called thin and it is not difficult to check that an association scheme $(\Omega,S)$ is thin if and only if $S$ is a regular subgroup of $\sym{\Omega}$. Since every group has a regular representation, one can consider groups as a particular case of association schemes. 

    Although symmetric Jordan schemes are always regular, this is no longer true of nonsymmetric ones as the following simple example illustrates.

    \begin{example} The full matrix algebra $M_\Omega(\F)$ is a discrete coherent algebra of rank $|\Omega|^2$. In the case of $\Omega=\{1,2\}$ the mapping  
    $\phi: \begin{bmatrix}
       a & b\\
       c & d\\
    \end{bmatrix}\rightarrow \begin{bmatrix}
       d & b\\
       c & a\\
    \end{bmatrix}$ is an anti-automorphism. Therefore, the subspace $\cA := {}^{\!\langle \phi\rangle\!} M_{\{1,2\}}(\F) = \left\{\begin{bmatrix}
       a & b\\
       c & a\\
    \end{bmatrix}\,\vline \, a,b,c\in\F\right\}$ is a coherent J-algebra which has order 2 and rank 3. The corresponding  \JC   $(\Omega,\{s_0,s_1,s_2\})$, where $s_0=I_\Omega$, $s_1=\{(1,2)\}$ and $ s_2=\{(2,1)\}$, is homogeneous but not regular, and its rank-to-order ratio $|S|/|\Omega|$ is $3/2$. 
\end{example}
    
    The following assertion shows that this is the maximum value of a rank-to-order ratio for any nonregular \JS. 

    \begin{prop}%\label{ratio}
        Let $(\Omega,S)$ be a nonregular \JS. Then $|S|/|\Omega|\leq 3/2$ where equality holds if and only if all basic relations are thin.  
    \end{prop}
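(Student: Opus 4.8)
The plan is to control everything through the \emph{symmetrized valencies}. For a basic relation $s\in S$ set $v(s):=|s\omega|+|s^t\omega|$. First I would check that this is independent of $\omega$: homogeneity means $\langle I_\Omega\rangle\circ\cA\subseteq\langle I_\Omega\rangle$, so every matrix of the adjacency algebra $\cA=\langle\und s\rangle_{s\in S}$ has constant diagonal; applying this to $\und s\star\und{s^t}\in\cA$, whose $(\omega,\omega)$-entry is $\tfrac12\bigl(|s\omega|+|s^t\omega|\bigr)$, gives the claim. Two consequences are immediate. Summing the partition identity $\sum_s|s\omega|=n$ together with its transpose yields
\[
\sum_{s\in S}v(s)=2n,
\]
and, since $v(s)=v(s^t)$, any symmetric basic relation has constant valency $|s\omega|=v(s)/2$ and is therefore regular. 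Hence non-regularity is carried entirely by anti-symmetric pairs $\{s,s^t\}$, while $v(s)\ge1$ for every basic relation and $v(1_\Omega)=2$.

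Next I would convert the count into a rank bound. The relations with $v(s)=1$ satisfy $|s\omega|,|s^t\omega|\le1$ with exactly one of the two equal to $1$; in particular $s\ne s^t$, so they occur in transpose pairs, say $m_1$ of them. Every other relation has $v(s)\ge2$, so
\[
2n=\sum_{s\in S}v(s)=2m_1+\!\!\sum_{v(s)\ge2}\!\!v(s)\ \ge\ 2m_1+2\bigl(|S|-2m_1\bigr)=2|S|-2m_1,
\]
whence $|S|\le n+m_1$. Everything therefore reduces to the single estimate $m_1\le n/2$, after which $|S|/|\Omega|\le(n+n/2)/n=3/2$ follows at once.

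For the estimate I would use the structure of the $v(s)=1$ relations together with the symmetrization. Such an $s$ is a bijection from $P_s:=\{\omega:|s\omega|=1\}$ onto $Q_s:=\{\omega:|s^t\omega|=1\}$, where $\{P_s,Q_s\}$ splits $\Omega$ into two halves (so $n$ is even) and $\und s\,\und{s^t}=1_{P_s}$. Passing to the symmetric part ${}^{\langle\tau\rangle}\!\cA$, which by Proposition~\ref{alg-aut} is again a coherent J-algebra and, being symmetric, is a regular symmetric Jordan scheme, each such $s$ contributes a valency-one class $\hat s=s\cup s^t$, i.e.\ a fixed-point-free involution (a perfect matching); the $m_1$ matchings arising from anti-symmetric pairs are precisely the \emph{oriented} ones. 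The point is that these orientations cannot be chosen independently: closure under $\star$ forces $\und s\star\und t=\tfrac12(\und s\,\und t+\und t\,\und s)$ to lie in $\cA$ for oriented $s,t$, and since its support sits inside a single matching class of the symmetrization, this imposes a consistency (cocycle-type) condition relating the source sets $P_s$. Analyzing this condition is what caps the number of simultaneously orientable matchings at $n/2$.

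The main obstacle is exactly this last step. The purely combinatorial inputs — edge-disjointness of distinct basic relations, or the fact that the valency-one classes are fixed-point-free involutions — only give $m_1\le n-1$, and one genuinely needs the Jordan-closure consistency to remove the remaining factor of two; this is also the place where the hypothesis that $(\Omega,S)$ is a J-\emph{configuration}, rather than merely a rainbow with constant diagonals, is essential, and it is the combinatorial heart that the loop classification of Subsection~3.1 makes precise. For the equality statement I would argue that $|S|/|\Omega|=3/2$ forces both $m_1=n/2$ and $v(s)=2$ for every $s$ with $v(s)\ge2$, so that $v(s)\in\{1,2\}$ throughout; the consistency analysis then rules out a non-thin $2$-to-$0$ class, leaving every basic relation thin. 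Conversely, the same structural description shows a non-regular thin scheme is rigid — its oriented matchings exhaust a full half-bipartition, forcing $m_1=n/2$ — so that a thin non-regular Jordan scheme attains the ratio $3/2$ exactly.
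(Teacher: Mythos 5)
Your preliminary reductions are all correct: $v(s):=|s\omega|+|s^t\omega|$ is indeed constant (constant diagonal of $\und{s}\star\und{s^t}$ by homogeneity), $\sum_{s\in S}v(s)=2|\Omega|$, symmetric classes are regular, the $v(s)=1$ classes are anti-symmetric bijections between complementary halves $P_s,Q_s$, and the count $|S|\le |\Omega|+m_1$ follows. But there is a genuine gap exactly where you flag it: the bound $m_1\le|\Omega|/2$ is never established. The ``cocycle-type consistency condition'' is not formulated, let alone analyzed, and as you concede, edge-disjointness alone only yields $m_1\le|\Omega|-1$. The missing ingredient is a \emph{global} statement: in a non-regular JS, all non-regular basic relations lie between the two parts of a \emph{single} partition $\Omega=\Omega_0\cup\Omega_1$ with $|\Omega_0|=|\Omega_1|$, and have constant relative valencies. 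Your $P_s,Q_s$ are defined per relation, and nothing in your argument forces distinct $v(s)=1$ relations to share the same bipartition; without that, the out-degree count at a fixed point cannot cap $m_1$ at $|\Omega|/2$. The same gap infects the equality case: equality in your chain forces $v(s)=2$ for all remaining classes, but a non-regular class of relative valency $2$ between the halves also has $v(s)=2$ and is not thin, and ruling it out again requires the structural input you lack.

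The paper closes precisely this hole by invoking \cite[Proposition 2.2]{MuzReiKli22} (and its proof), which supplies the common equal bipartition: $S$ then splits as $S_r\cup S_{0,1}\cup S_{1,0}$, and the three counts $\sum_{s\in S_r}n_s=\sum_{s\in S_{0,1}}n_s=\sum_{s\in S_{1,0}}n_s=|\Omega|/2$ give $|S|\le\tfrac{3}{2}|\Omega|$, with equality iff every (relative) valency equals $1$, i.e.\ iff every basic relation is thin. Once you import that structure theorem, your framework finishes in one line: each $\omega\in\Omega_0$ has total out-degree $|\Omega|/2$ into $\Omega_1$, and each of the $m_1$ oriented matchings consumes one unit of it, whence $m_1\le|\Omega|/2$. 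So the honest fix is to prove or cite the bipartition result rather than attempt the orientation/consistency analysis from scratch; the latter is not a detail but the entire content of the proposition.
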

    \begin{proof}
         By \cite[Proposition 2.2]{MuzReiKli22} there exists a partition $\Omega = \Omega_1\cup \Omega_0$ with $|\Omega_1|=|\Omega_0|$
such that every regular relation is contained in $\Omega_0^2\cup\Omega_1^2$ and each nonregular one is contained in either $\Omega_0\times\Omega_1$ or $\Omega_1\times\Omega_0$. Thus $S$ splits into a disjoint union $S=S_r\cup S_{0,1}\cup S_{1,0}$ where $S_r$ stands for  all regular relations while
$S_{i,1-i}:=\{s\in S \mid \subseteq \Omega_i\times\Omega_{1-i}\}$. 

Define $n_s:=\half p_{\{s,s^t\}}^{1_{\Omega}}$ for each $s\in S_{i,1-i}$. Then $|s|=n_s |\Omega_i|$, and it follows from $\sum_{s\in S_{i,1-i}} |s|=|\Omega_i|\cdot |\Omega_{1-i}|$ that $\sum_{s\in S_{i,1-i}} n_s=|\Omega|/2$. Since all summands in the latter sum are nonnegative integers, we conclude $|S_{i,1-i}|\leq |\Omega|/2$ for $i=0,1$.

Since a basic relation $s$ is regular if and only if $s\subseteq\Omega_0^2\cup\Omega_1^2$, we can write
\[
|\Omega_0^2\cup\Omega_1^2| = \sum_{s\in S_r} |s|\iff \frac{|\Omega|^2}{2}=\sum_{s\in S_r} n_s|\Omega|\iff \sum_{s\in S_r} n_s = \frac{|\Omega|}{2}.
\]
Arguing as before we conclude that $|S_r|\leq |\Omega|/2$.
Combining together all of the above, we obtain 
\[\textstyle 
|S| = |S_r|+|S_{0,1}|+|S_{1,0}|\leq \frac{3}{2}|\Omega|.
\]
Moreover, by the above arguments it follows that $|S|=\frac{3}{2}|\Omega|$ if and only if $n_s=1$ for all $s\in S$.
\end{proof}
Since the maximum rank-order ratio is attained by nonregular thin Jordan schemes, our next goal is to classify them. This will be proceeded in the next subsection.

\subsection{Nonregular thin Jordan schemes}\label{nonreg}
Let us begin by describing how every commutative homogeneous coherent algebra gives rise to a nonregular homogeneous coherent Jordan algebra.
\begin{definition}	
   Let $\mathcal{C}\le M_n(\mathbb{F})$ be a commutative homogeneous coherent algebra. We define $\cJ(\mathcal{C})\subseteq M_{2n}(\mathbb{F})$ as follows: 
   \[
      \cJ(\mathcal{C}) := \left\{ \begin{bsmallmatrix} A & B \\[2mm] C & A\end{bsmallmatrix} \Mid A,B,C\in \mathcal{C}\right\}.
   \]
\end{definition}
	
\begin{prop}\label{clm1}
   For every commutative homogeneous coherent algebra $\mathcal{C}\le M_n(\mathbb{F})$ we have that $\cJ(\mathcal{C})$ is a homogeneous coherent Jordan algebra. 
\end{prop}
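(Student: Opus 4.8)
The plan is to check directly that $\mathcal{J}(\mathcal{C})$ satisfies each defining property of a homogeneous coherent Jordan algebra, treating the easy closure conditions first and isolating the use of commutativity at the Jordan-product step. That $\mathcal{J}(\mathcal{C})$ is a vector subspace of $M_{2n}(\F)$ is immediate, since the block pattern $\begin{bsmallmatrix} A & B \\ C & A\end{bsmallmatrix}$ depends linearly on the triple $(A,B,C)\in\mathcal{C}^3$. Choosing $(A,B,C)=(I_n,0,0)$ produces $I_{2n}$ and $(A,B,C)=(J_n,J_n,J_n)$ produces $J_{2n}$, so both required matrices lie in $\mathcal{J}(\mathcal{C})$ because $I_n,J_n,0\in\mathcal{C}$. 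Closure under ${}^t$ and under $\circ$ then reduce blockwise to the corresponding closure properties of $\mathcal{C}$: transposition sends $\begin{bsmallmatrix} A & B \\ C & A\end{bsmallmatrix}$ to $\begin{bsmallmatrix} A^t & C^t \\ B^t & A^t\end{bsmallmatrix}$, whose diagonal blocks agree, and the Schur--Hadamard product acts entrywise, hence blockwise, so both operations keep us inside $\mathcal{J}(\mathcal{C})$.

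The main work is closure under the Jordan product, and this is where I expect the one genuine obstacle. Writing $M=\begin{bsmallmatrix} A & B \\ C & A\end{bsmallmatrix}$ and $M'=\begin{bsmallmatrix} A' & B' \\ C' & A'\end{bsmallmatrix}$, I would expand $MM'$ and $M'M$ blockwise and form $M\star M'=\half(MM'+M'M)$. The critical check concerns the two diagonal blocks: the $(1,1)$-block of $MM'+M'M$ equals $AA'+A'A+BC'+B'C$, while the $(2,2)$-block equals $AA'+A'A+CB'+C'B$. These two expressions coincide exactly because $\mathcal{C}$ is commutative, giving $BC'=C'B$ and $B'C=CB'$; this is the single place where commutativity is indispensable. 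Granting the equality, all three block-entries of $M\star M'$ are obtained from $A,B,C,A',B',C'$ by matrix multiplication and linear combination, hence lie in $\mathcal{C}$ (the assumption $\mathrm{char}(\F)\neq 2$ makes the factor $\half$ available), so $M\star M'\in\mathcal{J}(\mathcal{C})$.

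Finally, for homogeneity I would compute $I_{2n}\circ M$. Since the off-diagonal blocks of $I_{2n}$ vanish, this product equals $\begin{bsmallmatrix} I_n\circ A & 0 \\ 0 & I_n\circ A\end{bsmallmatrix}$, and homogeneity of $\mathcal{C}$ yields $I_n\circ A=\lambda I_n$ for some scalar $\lambda$, whence $I_{2n}\circ M=\lambda I_{2n}\in\langle I_{2n}\rangle$. Assembling these verifications establishes that $\mathcal{J}(\mathcal{C})$ is a homogeneous coherent Jordan algebra. The point to flag is that commutativity of $\mathcal{C}$ cannot be dropped: without it the diagonal blocks of $M\star M'$ need not agree, so the product would leave the prescribed block shape and the construction would break down.
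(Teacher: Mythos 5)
Your proof is correct and takes essentially the same approach as the paper's: a direct blockwise expansion of $M\star M'$, with the commutativity of $\mathcal{C}$ invoked at exactly the right spot to force the two diagonal blocks $AA'+A'A+BC'+B'C$ and $AA'+A'A+CB'+C'B$ to coincide. You additionally spell out the containment of $I_{2n}$ and $J_{2n}$, closure under ${}^t$ and $\circ$, and homogeneity, all of which the paper's proof treats as immediate.
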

	\begin{proof}
		Clearly, $\cJ(\mathcal{C})$ is a linear subspace of $M_{2n}(\mathbb{F})$. 
		Since $\mathcal{C}$ is itself closed under Schur-Hadamard multiplication, so is $\cJ(\mathcal{C})$. 
        
        It remains to show closure of the Jordan product. To this end, let $A,B,C,D,E,F\in \mathcal{C}$. Then 
		\[
		2\begin{bmatrix} A & B \\[1mm] C & A\end{bmatrix}\star\begin{bmatrix}D & E \\[1mm] F & D\end{bmatrix} = \begin{bmatrix} 2AD+BF+EC & 2AE+2BD\\[2mm] 2DC + 2AF & 2AD+BF+EC\end{bmatrix}\in\cJ(\mathcal{C}).\qedhere
		\]
	\end{proof}
	
	An important special case is when $\mathcal{C}$ is the adjacency algebra of a thin association scheme coming from an abelian group $G$. In this case, we shall write $\cJ(G)$ rather than the more accurate, albeit cumbersome,
     $\cJ(\mathcal{A}(G,G))$. Clearly, for every abelian group $G$ we have that $\cJ(G)$ is a nonregular thin homogeneous coherent Jordan algebra. 
	\begin{prop}\label{clm2}
		Let $\mathfrak{S}=(\Omega,\mathcal{C})$ be a nonregular thin Jordan scheme. and let $\mathcal{A}$ be its adjacency algebra. Then there exists an abelian group $G$ and a permutation matrix $P$, such that $\mathcal{A} = P^{-1}\cJ(G)P$. 
	\end{prop}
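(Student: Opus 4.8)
The goal is to show that every non-regular thin Jordan scheme is, up to simultaneous conjugation by a permutation matrix, of the form $\mathcal{J}(G)$ for some abelian group $G$. The plan is to start from the structural decomposition already invoked in Proposition~\ref{ratio}: by \cite[Proposition 2.2]{MuzReiKli22} the point set splits as $\Omega=\Omega_0\cup\Omega_1$ with $|\Omega_0|=|\Omega_1|=:n$, every basic relation being either regular and contained in $\Omega_0^2\cup\Omega_1^2$, or non-regular and contained in $\Omega_0\times\Omega_1$ or $\Omega_1\times\Omega_0$. Since the scheme is thin, \emph{every} basic relation has valency one, so in particular the non-regular relations in $S_{0,1}$ are partial bijections from $\Omega_0$ to $\Omega_1$, and the regular ones restrict to permutations on each of $\Omega_0,\Omega_1$. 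The first step is therefore to pin down a single basic relation $p\in S_{0,1}$ which is in fact a \emph{bijection} $\Omega_0\to\Omega_1$ (such a $p$ exists because the relations of $S_{0,1}$ partition $\Omega_0\times\Omega_1$ into thin relations, and a counting argument as in Proposition~\ref{ratio} forces at least one of them to have full size), and to use $p$ to identify $\Omega_1$ with $\Omega_0$; after this identification and a relabeling we may take $\Omega_0=\Omega_1=G$ as sets with $p=1_G$ sitting in the off-diagonal block as the identity matrix.

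The second step is to exploit the Jordan-closure to force a group structure on the regular relations. Writing the adjacency matrices in $2\times2$ block form relative to $\Omega_0,\Omega_1$, a regular relation $s\in S_r$ has the shape $\begin{bsmallmatrix} \und{s}_0 & 0 \\ 0 & \und{s}_1\end{bsmallmatrix}$ with $\und{s}_0,\und{s}_1$ permutation matrices on $G$, while $p=\begin{bsmallmatrix} 0 & I \\ 0 & 0\end{bsmallmatrix}$ (up to transpose). I would compute Jordan products $\und{s}\star p$ and $\und{s}\star p^t$ and match the results against the standard basis: since the scheme is thin, $\und{s}\star p$ must again be a sum of at most two basic relations, and comparing the two off-diagonal blocks $\und{s}_0 I$ and $I\und{s}_1$ shows $\und{s}_0=\und{s}_1$. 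Thus each regular relation acts by the \emph{same} permutation on both copies of $G$, so the set $\{\und{s}_0\mid s\in S_r\}$ is closed under the relational product and, being a set of $n$ commuting permutations of $G$ of valency one that partition $G^2$, forms a regular abelian subgroup of $\sym{G}$. This identifies $\{\und{s}_0\}$ with (the regular representation of) an abelian group $G$, and the diagonal blocks of $\mathcal{A}$ with the coherent algebra $\mathcal{A}(G,G)$; commutativity comes from the symmetry built into the Jordan product together with part~(b) of the structural decomposition.

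The third step is to verify that the off-diagonal blocks fill out all of $\mathcal{A}(G,G)$, i.e.\ that $S_{0,1}=\{sp\mid s\in S_r\}$ and likewise $S_{1,0}=\{ps^t\mid s\in S_r\}$, so that the whole scheme is exactly $\mathcal{J}(G)$ in the block form of the Definition. Here I would again use Jordan products: for $s\in S_r$ the product $\und{s}\star p$ has off-diagonal blocks $\und{s}_0 I$ and $I\und{s}_0=\und{s}_0$, exhibiting a basic relation in $S_{0,1}$ equal to $sp$; running over all $s\in S_r$ produces $n$ distinct relations in $S_{0,1}$, and since $|S_{0,1}|=n$ by the equality case of Proposition~\ref{ratio}, these exhaust $S_{0,1}$. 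Assembling the permutation matrix $P$ that realizes the relabeling of $\Omega$ by $G\sqcup G$ (via the bijection $p$) then gives $\mathcal{A}=P^{-1}\mathcal{J}(G)P$ as required.

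\textbf{Main obstacle.}
I expect the delicate point to be the transition from ``$S_r$ is a set of commuting valency-one permutations partitioning $G^2$'' to ``$S_r$ is a \emph{group}'': one must check that the relational product of two regular basic relations is again a \emph{single} basic relation (not a proper union) and that the identity and inverses are present, and that this product is recovered \emph{intrinsically} from the Jordan structure rather than from an assumed associative closure — the Jordan product alone only gives $\und{s}\star\und{r}=\tfrac12(\und{s}\,\und{r}+\und{r}\,\und{s})$, so one has to argue that on the regular (diagonal) part the two orders coincide, which is precisely where the forced equality $\und{s}_0=\und{s}_1$ and the commutativity of the diagonal algebra must be leveraged. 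Establishing abelianness and closure under the honest matrix product from Jordan-only data is the crux; everything else is bookkeeping with $2\times2$ blocks.
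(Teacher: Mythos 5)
Your overall architecture coincides with the paper's: both start from the block decomposition of \cite[Proposition 2.2]{MuzReiKli22}, both normalize by a permutation conjugation so that some off-diagonal basic relation becomes the identity block (your relabeling of $\Omega_1$ via $p$ is exactly the paper's conjugation by $P=\begin{bsmallmatrix} I & O\\ O & C\end{bsmallmatrix}$), and your first and third steps are sound (thinness plus the half-regularity in \cite[Proposition 2.2]{MuzReiKli22} does make every relation in $S_{0,1}$ a bijection, with $|S_{0,1}|=|S_r|=n$). But the pivotal deduction in your second step fails as stated. For a diagonal basic matrix $\begin{bsmallmatrix}\und{s}_0 & O\\ O & \und{s}_1\end{bsmallmatrix}$, the Jordan product with $p=\begin{bsmallmatrix}O & I\\ O & O\end{bsmallmatrix}$ has single nonzero block $\half(\und{s}_0+\und{s}_1)$; matching against the standard basis only yields $\und{s}_0+\und{s}_1=\und{t}_1+\und{t}_2$ for basic $t_1,t_2\in S_{0,1}$, possibly distinct. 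Two distinct permutation matrices can have the same sum as two other, disjoint ones, so ``comparing the off-diagonal blocks'' does not force $\und{s}_0=\und{s}_1$ --- the Jordan product has symmetrized away precisely the information you need. Moreover, your ``main obstacle'' paragraph correctly identifies abelianness and multiplicative closure as the crux, but the plan contains no mechanism that delivers them: the blocks $\und{s}_0$ partitioning $\Omega_0^2$ form a priori only a sharply transitive set of permutations (the rows of a Latin square), not a group, and nothing in your outline rules that out.

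The missing idea --- the engine of the paper's proof --- is to take Jordan products of the two off-diagonal families \emph{with each other}, rather than of diagonal elements with $p$: for $C\in\cH$ (upper blocks) and $D\in\cK$ (lower blocks), $2\begin{bsmallmatrix}O&O\\ D&O\end{bsmallmatrix}\star\begin{bsmallmatrix}O&C\\ O&O\end{bsmallmatrix}=\begin{bsmallmatrix}CD&O\\ O&DC\end{bsmallmatrix}$, and since every diagonal basic matrix has two full permutation blocks, a row-sum count shows this $0,1$-matrix must be a \emph{single} basic matrix. Taking $C=I$ (your normalization) places $\begin{bsmallmatrix}D&O\\ O&D\end{bsmallmatrix}$ in the standard basis for every $D\in\cK$; since $|S_r|=|\cK|=n$, these exhaust the diagonal basics, which is the legitimate route to $\und{s}_0=\und{s}_1$. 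Running the same identity over \emph{all} $C\in\cH$ then gives $A=C^{-1}AC$, i.e.\ every diagonal block $A$ commutes with every $C\in\cH$; the further product $\begin{bsmallmatrix}A&O\\ O&A\end{bsmallmatrix}\star\begin{bsmallmatrix}O&C\\ O&O\end{bsmallmatrix}=\begin{bsmallmatrix}O&AC\\ O&O\end{bsmallmatrix}$ shows $AC\in\cH$, hence $\cG_1\subseteq\cH$ and $\cH=\cG_1=\cK$ by cardinality, and abelianness drops out from $AA'C=(A'C)A=A'AC$ (the paper's Claims (3), (4), (7), (8)). Without these cross-products between $\cH$ and $\cK$, your outline cannot close the gap it itself flags.
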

	\begin{proof}
		Let $\mathfrak{J}=(\Omega,\mathcal{C})$ be a nonregular thin Jordan scheme and let $\mathcal{A}$ be its adjacency algebra. Let $\cS=\{A_i\}_{i=1}^r$ be the standard basis of $\mathcal{A}$. By \cite[Proposition 2.2]{MuzReiKli22}, each element of $\cS$ is of one of the following shapes:
		\[
		\begin{bmatrix}
			A & O \\[1mm]
			O & B
		\end{bmatrix}, \quad
		\begin{bmatrix}
			O & C \\[1mm] 
            O & O
		\end{bmatrix},\quad
		\begin{bmatrix}
			O & O \\[1mm]
			D & O
		\end{bmatrix},
		\] 
		where $A,B,C,D$ are certain matrices of order $n$, and where $O$ denotes the zero matrix. Since $\mathfrak{S}$ is thin, each of $A,B,C,D$ is a permutation matrix.
        
		Let us choose the following designations:   
        \begin{align*}
			\cG_1 &= \left\{A\Mid\exists B\,:\, 
			\begin{bsmallmatrix}
				A & O \\[1mm] O & B 	
			\end{bsmallmatrix}\in \cS\right\},&
			\cG_2 &= \left\{B\Mid\exists A\,:\, 
			\begin{bsmallmatrix}
				A & O \\[1mm] O & B 	
			\end{bsmallmatrix}\in \cS\right\},\\
			\cH &= \left\{C\Mid 
			\begin{bsmallmatrix}
				O & C \\[1mm] O & O 	
			\end{bsmallmatrix}\in \cS\right\},&
			\cK &= \left\{D\Mid 
			\begin{bsmallmatrix}
				O & O \\[1mm] D & O 	
			\end{bsmallmatrix}\in \cS\right\}.
		\end{align*}
		Note that $\cK=\cH^t=\cH^{-1}$ and $\sum \cG_1 = \sum \cG_2 = \sum \cH = \sum \cK = J$. Moreover, $|\cG_1| = |\cG_2| = |\cH| = |\cK| = n$. 
		
		We next prove
		\begin{equation}\label{claim1}
			\forall_{A\in \cG_1}\,\exists!_{B\in \cG_2}\,:\, \begin{bsmallmatrix} A & O\\[2mm] O & B\end{bsmallmatrix}\in \cS.
		\end{equation}
		Suppose $\begin{bsmallmatrix} A & O\\[1mm]O & B_1\end{bsmallmatrix}, \begin{bsmallmatrix} A & O\\[1mm]O & B_2\end{bsmallmatrix}\in \cS$ with $B_1\neq B_2$. Then $\begin{bsmallmatrix} O & O \\[1mm] O & B_1-B_2\end{bsmallmatrix}\in \mathcal{A}$, whence also 
		\[
		\begin{bmatrix} 
			O & O\\[1mm] 
			O & B_1 - B_2	
		\end{bmatrix}\circ
		\begin{bmatrix} 
			A & O\\[1mm] 
			O & B_1 	
		\end{bmatrix}=	
		\begin{bmatrix} 
			O & O\\[1mm] 
			O & B_1 	
		\end{bmatrix}\in \cA.
		\]
		But since $\mathcal{A}$ is closed under taking powers, we thereby obtain $\begin{bsmallmatrix} O & O\\[1mm] O & I\end{bsmallmatrix}\in\mathcal{A}$, a contradiction under the assumption that $\mathfrak{J}$ is a Jordan-scheme. It therefore
        follows that $B_1=B_2$.
		
		This establishes a bijection between $\cG_1$ and $\cG_2$ which we denote by $A\mapsto \varphi(A)$. 
		Clearly, $\cG_1$ is the standard basis of a thin homogeneous coherent $J$-algebra, and likewise for $\cG_2$.
		
		We next show that 
		\begin{equation}\label{claim3}
			\forall_{C\in \cH}\,\forall_{A\in \cG_1}\,\exists_{D\in \cK}\,:\, \begin{bsmallmatrix} A & O\\[2mm] O & \varphi(A)\end{bsmallmatrix} = \begin{bsmallmatrix} CD & O \\[2mm] O & DC\end{bsmallmatrix}. 
		\end{equation}
		To see this, first observe that for all $C\in \cH$, $D\in \cK$ we have
		\[
		2 \begin{bmatrix} O & O \\[1mm] D & O \end{bmatrix}\star\begin{bmatrix}O & C\\[1mm] O & O\end{bmatrix} = \begin{bmatrix} CD & O \\[1mm] O & DC\end{bmatrix}.
		\]
		Since $|C\cK| = |\cK|=|\cG_1|$, there exists $D\in \cK$ for which $A = CD$. By \eqref{claim1}, $\varphi(A) = DC$ holds for this $D$.
		
		Next we establish 
		\begin{equation}\label{claim4}
			\forall_{C\in \cH}\,\forall_{A\in \cG_1}\,:\, \varphi(A) = C^{-1} A C.	
		\end{equation}
		To this end, let $A\in \cG_1$. By \eqref{claim3}, there exists $D\in \cK$ such that $A=CD$ and $\varphi(A) = DC$. From this it follows that $D=C^{-1}A$, hence $\varphi(A) = C^{-1} A C$. 
		
		Let $P$ be a permutation matrix and let $X\in \cA$ be arbitrary. Then conjugation $X\mapsto PXP^{-1}$ preserves each of the operations $\cdot$, $\tau$, $\star$, $\circ$ on $\cA$. In other words, the resulting algebra $\mathcal{A}'= P\mathcal{A}P^{-1}$ is a homogeneous coherent $J$-algebra, and the Jordan schemes corresponding to $\mathcal{A}$ and $\mathcal{A'}$ are combinatorially isomorphic. 	
		
		Next we claim that:
		\begin{equation}\label{claim6}
			\exists_P\,:\, \text{$P$ is a permutation matrix, and  }\forall_{A\in \cG_1}\,:\,  P\begin{bmatrix} A & O \\[1mm] O & \varphi(A)\end{bmatrix} P^{-1}= \begin{bmatrix} A & O \\[1mm] O & A \end{bmatrix}.
		\end{equation}
		To see this, let $C\in \cH$ and let $P:= \begin{bsmallmatrix} I & O \\[1mm] O & C\end{bsmallmatrix}$. Then from \eqref{claim4} it follows that 
		\[
		P \begin{bmatrix} A & O \\[1mm] O & \varphi(A)\end{bmatrix} P^{-1} = \begin{bmatrix} I & O \\[1mm] O & C \end{bmatrix} \begin{bmatrix} A & O \\[1mm] O & C^{-1} A C \end{bmatrix} \begin{bmatrix} I & O \\[1mm] O & C^{-1} \end{bmatrix} = \begin{bmatrix} A & O \\[1mm] O & A \end{bmatrix}.
		\]
		
		By \eqref{claim6}  we may now assume that $\cG_1=\cG_2$ and moreover that $\varphi$ is the identity-map. Henceforth, we make these assumptions.
		
		Since $A = \varphi(A) = C^{-1} A C$, an immediate consequence of \eqref{claim4} is  
		\begin{equation}\label{claim5}
			\forall_{A\in \cG_1},\,\forall_{C\in \cH}\,:\, AC=CA.	
		\end{equation}
		 
		For $A\in \cG_1$ and $C\in \cH$,  we next compute
		\[	
        \begin{bmatrix} A & O \\[1mm] O & A \end{bmatrix}
		\star\begin{bmatrix} O & C\\[1mm] O & O\end{bmatrix} 
		= \half\begin{bmatrix} O & AC + CA \\[1mm] O & O\end{bmatrix} 
		= \begin{bmatrix} O & AC\\[1mm] O & O\end{bmatrix}
		\]
which proves 
        \begin{equation*}%\label{claim8}
			\forall_{A\in \cG_1},\,\forall_{C\in \cH}\,:\, AC\in \cH.	
		\end{equation*}

		We next claim $\cG_1$ is an abelian group. 	
		To this end, let $C\in \cH$ and let $A,A'\in \cG_1$. Then
        \begin{equation*}%\label{claim7}
		 AA'C = A(A'C) = (A'C)A = A'AC\implies 
         AA'=A'A\implies
         A\star A' = AA',
         \end{equation*}
		and the claim is proved. 
        
		Finally, it remains to show that $\cH=\cK=\cG_1$.	
		For this purpose, let $C=(c_{i,j})$ be the unique element of $\cH$ with $c_{1,1}=1$, i.e.\  $C\vec{e}_1 = \vec{e}_1$. (Note that this element exists because $\sum \cH = J$.) Let $k\in \{1,\dots,n\}$ and let $A_k=\big(a^{(k)}_{i,j}\big)$ be the unique element of $\cG_1$ with $a^{(k)}_{1,k}=1$, i.e.\ $A_k\vec{e}_1=\vec{e_k}$. (This element exists due to $\sum \cG_1 = J$.) 
		
		Since $A_k C\colon \vec{e}_1\mapsto \vec{e}_k$,  $CA_k\colon \vec{e}_1\mapsto C\vec{e}_k$, and $A_kC=CA_k$ by (\ref{claim5}), it follows that $C = I$. We thereby conclude that $\cH=\{AC\mid A\in \cG_1\}= \cG_1$. 
		As $\cK=\cH^{-1}$, we also have $\cK=\cG_1$.  
		But now it is clear that $\mathcal{A} = \cJ(\cG_1)$. 
	\end{proof}
Let us collect our findings in one place:
% \begin{theorem}
%     For every finite abelian group $G$, $\cJ(G)$ is the adjacency algebra of a non-regular thin Jordan scheme.
%     Moreover, for every non-regular thin Jordan scheme $\mathfrak{S}$ with adjacency algebra $\cA$ there exists a finite abelian group $G$, such that $\cA\cong \cJ(G)$. Finally, for finite abelian groups $G$ and $H$ we have $\cJ(G)\cong\cJ(H)$ if and only if $G\cong H$. 
% \end{theorem}
\begin{theorem} ~\\[-3ex]
\begin{enumerate}[(a)]
    \item For every finite abelian group $G$, $\cJ(G)$ is the adjacency algebra of a non-regular thin Jordan scheme.
    \item For every non-regular thin Jordan scheme $\mathfrak{S}$ with adjacency algebra $\cA$, there exists a finite abelian group $G$ such that $\cA\cong \cJ(G)$. 
    \item For finite abelian groups $G$ and $H$, one has $\cJ(G)\cong\cJ(H)$ if and only if $G\cong H$. 
    \end{enumerate}
\end{theorem}
\begin{proof}
    Parts (a) and (b) follow from Propositions~\ref{clm1} and \ref{clm2}, respectively. 
    For part (c), clearly, if $G\cong H$ then $\cJ(G)\cong \cJ(H)$. For the reverse direction we note that $G$ can be reconstructed from $\cJ(G)$ as follows: observe that the WL-closure $\mathcal{W}$ of $\cJ(G)$ is equal to 
    \[
    \mathcal{W} = \left\{ \begin{bsmallmatrix} A & B \\[2mm] C & D\end{bsmallmatrix} \Mid A,B,C,D\in \cA(G,G)\right\}.
    \]
    In other words it is a coherent algebra with two fibres where each fibre constituent coincides with $\cA(G,G)$. 
\end{proof}
    \subsection{Regular thin Jordan  schemes}
	
    In this subsection, we assume that $(\Omega,S)$ is a regular thin \JS, that is, $S$ is a set of permutations on $\Omega$. The set $S$ contains the identity permutation $1_\Omega$ and is closed under transpose, i.e.\ $s^{t}\in S$ for every $s\in S$. In fact  $s^t=s^{-1}$.
	
	Since $S$ is a partition of $\Omega^2$ into a disjoint union of permutations, for every pair $(\alpha,\beta)\in\Omega^2$ the basic relation $S(\alpha,\beta)\in S$ is the unique permutation that maps $\alpha$ to $\beta$. From the definition of a \JS it follows that $S(\alpha,\alpha)=1_\Omega$ and $S(\alpha,\beta)
	=S(\beta,\alpha)^{-1}$. The same definition implies that the adjacency matrix $(S(\alpha,\beta))_{\alpha,\beta\in\Omega}$ is a \emph{Latin} square. Pick an arbitrary point $\omega_0\in\Omega$ and define a binary operation $\diamond$ on $S$ by $a\diamond b = S(\omega_0,a(b(\omega_0)))$. That is, a triple $a,b,c\in S$ satisfies $a\diamond b=c$ iff $a(b(\omega_0)) = c(\omega_0)$.
	
	\begin{remark} It follows from the definition that the product $\diamond$ depends on the  choice $\omega_0$ of ``base point". It could be that a different choice of  base point will yield a nonisomorphic magma. 
	\end{remark}
	
	\begin{prop}\label{loop}
		The magma $(S,\diamond)$ has the following properties
		\begin{enumerate}[(a)]
			\item \label{loop_a} $1_\Omega$ is the neutral element of $(S,\diamond)$;
			\item \label{loop_b} $\forall_{a\in S}\ a^{t}\diamond a =1_\Omega$;
			\item \label{loop_c} $\forall_{a,b\in S}\ a^{t}\diamond (a\diamond b)=b$;
			\item \label{loop_d} $\forall_{a,b\in S}$ the equation $x\diamond a = b$ has a unique solution $x\in S$.
		\end{enumerate}	
	\end{prop} 
	\begin{proof}
{\sl Proof of \ref{loop_a}:} We have $a\diamond 1_\Omega = S(\omega_0,a(1_\Omega(\omega_0)))= S(\omega_0,a(\omega_0)) = a$. Similarly, $1_\Omega\diamond a = a$.
\smallskip
		
{\sl Proof of \ref{loop_b}:} This follows at once from the identity $a^{t}(a(\omega_0))=\omega_0=1_\Omega(\omega_0)$.
\smallskip
		
{\sl Proof of \ref{loop_c}:}
		$a\diamond  b=c\Leftrightarrow a(b(\omega_0)) = c(\omega_0)\Leftrightarrow b(\omega_0)=a^{t}(c(\omega_0))\Leftrightarrow a^{t}\diamond c=b$.
        \smallskip
		
{\sl Proof of \ref{loop_d}:} We have $x(a(\omega_0))=b(\omega_0)$. Therefore, $x=S(a(\omega_0),b(\omega_0))$.
	\end{proof}
	
	We conclude from the above that $(S,\diamond)$ is a loop with the left inverse property. It is well known that each element $a\in S$  determines two permutations $\ell_a$ and $r_a$ on $S$, defined naturally by $\ell_a(x)=a\diamond x$ and $r_a(x)=x\diamond a$.  From \ref{loop_c} it follows that $\ell_{a^{t}} \ell_a =1_S$, that is, $\ell_{a^t}=\ell_a^{-1}$. It also follows from \ref{loop_c} that left division in the loop $(S,\diamond)$ is $a^{t}\diamond b$. Right division will be denoted as $b/a$, that is, $(b/a)\diamond a = b$. Later we will prove $b/a = b\diamond a^{t}$.

Recall that an 
\emph{isotopism} between two Latin squares $L:\Omega^2\rightarrow S$ and $L':{\Omega'}^2\rightarrow S'$ is a triple of  bijections $\alpha\!:\!\Omega\rightarrow \Omega'$, $\beta\!:\!\Omega\rightarrow \Omega'$ and $\gamma\!\!:S\rightarrow S'$ that satisfy the identity $\gamma(L(\omega_1,\omega_2)) = L'(\alpha(\omega_1),\beta(\omega_2))$ for  all $\omega_1,\omega_2\in \Omega$.
    
	\begin{prop}\label{conj} Let $\sigma:S\rightarrow \Omega$ be the mapping defined via $\sigma(s)=s(\omega_0)$. Then 
		\begin{enumerate}[(a)]
            \item \label{conj_a} $\sigma$ is a bijection.
			\item \label{conj_b} $\sigma^{-1} a \sigma = \ell_a$  for each $a\in S$.
			\item \label{conj_c} The triple of mappings $\sigma,\sigma,1_S$ is an isotopism between the Latin squares $(v/u)_{v,u\in S}$ and $(S(\alpha,\beta))_{\alpha,\beta\in\Omega}$.
		\end{enumerate}
	\end{prop}
	\begin{proof}
        {\sl Proof of \ref{conj_a}:} If $s\neq t\in S$, then $s\cap t=\emptyset$. Thus $\sigma(s)=s(\omega_0)\neq t(\omega_0)=\sigma(t)$ which proves $\sigma$ is injective. To prove surjectivity, pick an arbitrary $\omega\in\Omega$ and set $s:=S(\omega_0,\omega)$. Then $\sigma(s)=s(\omega_0)=\omega$.
        \smallskip
        
		{\sl Proof of \ref{conj_b}:} We must show $a\sigma = \sigma \ell_a$ for each  $a\in S$. But this is equivalent to showing  $\forall_{s\in S}:\ a(\sigma(s)) =\sigma(\ell_a(s))$. Now $a(\sigma(s)) = a(s(\omega_0)) = (a\diamond s)(\omega_0) = \sigma(\ell_a(s))$.
        \smallskip
		
		{\sl Proof of \ref{conj_c}:} We must show $S(\sigma(u),\sigma(v)) = 1_S(v/u)=v/u$ for all $u,v\in S$. Denote $w:=v/u$.
		Then $w\diamond u = v$ which implies $w(u(\omega_0))=v(\omega_0)$. Hence $w = S(\sigma(u),\sigma(v))$.
	\end{proof}
	Proposition \ref{conj}  implies that the permutations $\{\ell_s\}_{s\in S}$ form a Jordan scheme on $S$ isomorphic to the original  $(\Omega,S)$. Therefore $\forall_{u,v\in S}\  \und{\ell_u}\cdot\und{\ell_v} + \und{\ell_v}\cdot\und{\ell_u} = \underline{\ell_{u\diamond v}} + \underline{\ell_{v\diamond u}}$, or equivalently,
    \begin{equation}\label{jordan}
 \forall_{u,v\in S}\ \ell_u\ell_v\cup \ell_v\ell_u = \ell_{u\diamond v} \cup \ell_{v\diamond u}.
	\end{equation} 
	This latter equality may be re-expressed as  
	\begin{equation}\label{c-assoc}
		\forall_{u,v,w\in S}\ \{u\diamond (v\diamond w), v\diamond (u\diamond w)\} = \{(u\diamond v)\diamond w, (v\diamond u)\diamond w)\}.
	\end{equation}
	As an immediate consequence we conclude that $(S,\diamond)$ has the left alternative property: $u\diamond (u\diamond v) = (u\diamond u)\diamond v$.
	
	For each pair $u,v\in S$, we set ${\mathfrak a}({u,v}):=\{w\in S\mid u\diamond (v\diamond w) =(u\diamond v)\diamond w\}$. Observe that $1_\Omega\in \mathfrak a({u,v})$. It follows from~\eqref{c-assoc} that $\mathfrak a({u,v})=\mathfrak a({v,u})$. 
	
	\subsubsection{$(S,\diamond)$ is a Moufang loop}
	
	To show this we start with the following assertion:
	
	\begin{prop}\label{commute} Let $u,v\in S$ be an arbitrary pair. Then the following hold:
		\begin{enumerate}[(a)]
			\item If $u\diamond v = v\diamond u$, then $\ell_u\ell_v=\ell_v\ell_u=\ell_{u\diamond v}$ and $\mathfrak a({u,v}) = S$.
			\item If $u\diamond v \neq v\diamond u$, then $\ell_u\ell_v\cap \ell_v\ell_u=\emptyset$. 
		\end{enumerate}
	\end{prop}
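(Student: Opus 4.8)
The plan is to deduce both parts directly from the ``coarse associativity'' identity~\eqref{c-assoc} together with the loop axioms of Proposition~\ref{loop}, so that no combinatorial input beyond a singleton-collapse observation is needed. First I would record the bookkeeping: each $\ell_a$ is a permutation of $S$ (injectivity of $a\mapsto\ell_a$ follows by evaluating at $1_\Omega$), and, under the relational-product convention $ab=\{(\alpha,\gamma)\mid\exists\beta\,(\alpha,\beta)\in a,(\beta,\gamma)\in b\}$, the relation $\ell_u\ell_v$ is the graph of the map $x\mapsto v\diamond(u\diamond x)$, while $\ell_v\ell_u$ is the graph of $x\mapsto u\diamond(v\diamond x)$; likewise $\ell_{u\diamond v}$ and $\ell_{v\diamond u}$ are the graphs of $x\mapsto(u\diamond v)\diamond x$ and $x\mapsto(v\diamond u)\diamond x$. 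Thus for each fixed $w$ the two-element sets on the two sides of~\eqref{c-assoc} record exactly the values taken at $w$ by these four relations, and both parts of the proposition become statements about when those sets degenerate to singletons.

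For part~(a), assume $u\diamond v=v\diamond u$. Then the right-hand set of~\eqref{c-assoc} is the singleton $\{(u\diamond v)\diamond w\}$, which forces the left-hand set to be the same singleton; hence $u\diamond(v\diamond w)=(u\diamond v)\diamond w$ and $v\diamond(u\diamond w)=(u\diamond v)\diamond w$ for every $w\in S$. Reading the second identity as an equality of function graphs gives $\ell_u\ell_v=\ell_{u\diamond v}$, and the first gives $\ell_v\ell_u=\ell_{u\diamond v}$, so $\ell_u\ell_v=\ell_v\ell_u=\ell_{u\diamond v}$. Moreover the first identity, holding for all $w$, says precisely that $A_{u,v}=S$.

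For part~(b), assume $u\diamond v\neq v\diamond u$ and suppose toward a contradiction that $\ell_u\ell_v\cap\ell_v\ell_u$ contains a pair $(w,z)$. Then $z=v\diamond(u\diamond w)=u\diamond(v\diamond w)$, so the left-hand set of~\eqref{c-assoc} collapses to the singleton $\{z\}$; consequently the right-hand set collapses as well, yielding $(u\diamond v)\diamond w=(v\diamond u)\diamond w$. By Proposition~\ref{loop}(4) the map $x\mapsto x\diamond w$ is a bijection, hence injective, so $u\diamond v=v\diamond u$, contradicting the hypothesis. Therefore $\ell_u\ell_v\cap\ell_v\ell_u=\emptyset$. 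The only genuine obstacle here is organizational: one must keep the relational-product composition convention straight and notice the singleton-collapse mechanism inside~\eqref{c-assoc}; once these are in place both statements are forced. For part~(b) an alternative route is to compare the cardinalities of the two equal unions in~\eqref{jordan}, which gives $|\ell_u\ell_v\cap\ell_v\ell_u|=|\ell_{u\diamond v}\cap\ell_{v\diamond u}|$, the right-hand intersection being empty by the same cancellation argument.
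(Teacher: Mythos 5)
Your proof is correct and follows essentially the same route as the paper: both parts are read off from the pointwise set identity~\eqref{c-assoc}, with part~(a) coming from the singleton collapse of the right-hand side and part~(b) from the fact that $\ell_{u\diamond v}$ and $\ell_{v\diamond u}$ have disjoint graphs by right cancellation (Proposition~\ref{loop}(4)), your version merely phrasing (b) contrapositively where the paper argues directly. Your added care with the relational-composition convention and the alternative cardinality count via~\eqref{jordan} are fine but not substantively different from the paper's argument.
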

	\begin{proof}
		{\sl Proof of \ref{conj_a}:} This is an immediate consequence of \eqref{c-assoc}.
        \smallskip
		
		{\sl Proof of \ref{conj_b}:} As $u\diamond v\neq v\diamond u$, we have  $\ell_{u\diamond v}\cap \ell_{v\diamond u} = \emptyset$. But then 
		$(u\diamond v)\diamond w\neq (v\diamond u)\diamond w$ holds for every $w\in S$, whence the result follows once more from  \eqref{c-assoc}. 
	\end{proof}
	
	\begin{prop}\label{part} 
		Let $u,v\in S$, and let $\mathfrak a:=\mathfrak a({u,v})$ and $\overline{\mathfrak a}:=S\setminus \mathfrak a$. Then the following hold:
        \begin{enumerate}[(a)]
            \item $\ell_{u\diamond v} = \ell_u \ell_v 1_{\mathfrak a} \cup \ell_v \ell_u 1_{\overline{\mathfrak a}}$; in particular, $\ell_{u\diamond v} \subseteq \ell_u \ell_v \cup \ell_v \ell_u$.
            \item The set $\mathfrak a$ is invariant with respect to  $\ell_u$ and $\ell_v$.
        \end{enumerate} 
	\end{prop}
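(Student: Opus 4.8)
The plan is to treat the two parts separately: part (a) follows directly from the Jordan identity \eqref{c-assoc} together with the definition of $A_{u,v}$, and then I would bootstrap from (a) to (b).

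For part (a) I would argue pointwise. Fix $u,v$ and write $A=A_{u,v}$. By definition $\ell_{u\diamond v}(w)=(u\diamond v)\diamond w$, while the image of $w$ under $\ell_u\ell_v$ is $u\diamond(v\diamond w)$ and under $\ell_v\ell_u$ is $v\diamond(u\diamond w)$. Thus $w\in A$ says precisely that $\ell_{u\diamond v}(w)=(\ell_u\ell_v)(w)$, whereas $w\in\bar A$ says $\ell_{u\diamond v}(w)\neq(\ell_u\ell_v)(w)$. Since \eqref{c-assoc} guarantees $\ell_{u\diamond v}(w)\in\{(\ell_u\ell_v)(w),(\ell_v\ell_u)(w)\}$, in the latter case the value must be $(\ell_v\ell_u)(w)$. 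Collecting the two cases yields exactly $\ell_{u\diamond v}=\ell_u\ell_v 1_A\cup\ell_v\ell_u 1_{\bar A}$, and the asserted inclusion $\ell_{u\diamond v}\subseteq\ell_u\ell_v\cup\ell_v\ell_u$ is then immediate. This argument is uniform: in the commuting case $A=S$ and the formula degenerates correctly.

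For part (b) the target is $\ell_u(A)=A$ and $\ell_v(A)=A$; since $S$ is finite and the $\ell$'s are permutations, it suffices to prove closure, i.e. $w\in A\Rightarrow u\diamond w\in A$ and $w\in A\Rightarrow v\diamond w\in A$. I would test $v\diamond w\in A$ by applying \eqref{c-assoc} to the triple $(u,v,v\diamond w)$, then simplifying with the left-alternative law $v\diamond(v\diamond w)=(v\diamond v)\diamond w$ and with the hypothesis $w\in A$ (which replaces $u\diamond(v\diamond w)$ by $(u\diamond v)\diamond w$). This collapses the identity to a two-element set equality $\{X,Y\}=\{Z,W\}$, in which $X=u\diamond((v\diamond v)\diamond w)$ is the quantity I want to equal $Z=(u\diamond v)\diamond(v\diamond w)$. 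The case of $\ell_u$ is handled symmetrically, using left-alternativity in the form $u\diamond(u\diamond w)=(u\diamond u)\diamond w$.

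The main obstacle is resolving this dichotomy. By Proposition \ref{commute}(b), in the non-commuting case $\ell_u\ell_v$ and $\ell_v\ell_u$ are disjoint, so $X\neq Y$ and the set equality is a genuine binary choice between the parallel matching $X=Z$ (what I want) and the crossed matching $X=W,\ Y=Z$. I would exclude the crossed matching by feeding $Y=Z$ into a second instance of \eqref{c-assoc}, applied to $(u\diamond v,v,w)$, and cancelling the common right factor $w$ via the unique right-divisibility of the loop (Proposition \ref{loop}(4)); this forces $(u\diamond v)\diamond v=v\diamond(u\diamond v)$, which by \eqref{c-assoc} applied to $(u,v,v)$ is equivalent, in the non-commuting case, to $v\notin A_{u,v}$. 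Here lies the genuine kernel of the statement: since $1_\Omega\in A_{u,v}$, invariance would in particular force $v=\ell_v(1_\Omega)\in A_{u,v}$ and $u=\ell_u(1_\Omega)\in A_{u,v}$, so (b) already contains the right-alternative law $(u\diamond v)\diamond v=u\diamond(v\diamond v)$, which is not among the identities established so far. Thus the crux is to prove that $v\in A_{u,v}$ always holds, thereby ruling out the crossed matching; I expect this to require the full permutation identity \eqref{jordan} together with the left-inverse property $\ell_{a^{t}}=\ell_a^{-1}$, rather than any single substitution into \eqref{c-assoc}. The commuting case is trivial, since then $A=S$ is invariant under everything.
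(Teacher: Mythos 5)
Your part (a) is correct and is essentially the paper's own argument: reading \eqref{c-assoc} pointwise at each $w$ and splitting according to the definition of $A_{u,v}$ gives exactly $\ell_{u\diamond v}=\ell_u\ell_v 1_A\cup\ell_v\ell_u 1_{\bar{A}}$. Part (b), however, is not proved. Your reduction is sound as far as it goes: in the non-commuting case the set equality obtained from \eqref{c-assoc} at the triple $(u,v,v\diamond w)$ is a genuine binary choice (by Proposition~\ref{commute}(b)), and the crossed matching, fed through \eqref{c-assoc} at $(u\diamond v,v,w)$ and right cancellation (Proposition~\ref{loop}(4)), forces $(u\diamond v)\diamond v=v\diamond(u\diamond v)$, i.e.\ $v\notin A_{u,v}$. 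But that leaves your whole argument conditional on the claim $v\in A_{u,v}$, which you explicitly do not prove (``I expect this to require\dots''). This is precisely the missing idea, and it cannot be supplied within your framework without circularity: in the paper, $u,v\in A_{u,v}$ --- equivalently the right- and middle-alternative laws of Proposition~\ref{alt} --- is a \emph{consequence} of part (b), drawn after the proposition, not an available ingredient. Your own diagnosis that (b) already contains the right-alternative law is accurate, and it shows that no further substitutions into \eqref{c-assoc} of the kind you attempt can close the loop: every such pointwise instance again presents a two-element set equality whose resolution needs exactly the membership you are trying to establish.

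The paper escapes this trap by abandoning the pointwise dichotomy in favor of a global counting argument, which is the step your sketch is missing. From $u^t\diamond(u\diamond v)=v$ and part (a) one gets $\ell_v\subseteq\ell_u^{-1}\ell_{u\diamond v}\cup\ell_{u\diamond v}\ell_u^{-1}$; substituting the decomposition from (a) yields the four-term cover $\ell_v\subseteq\ell_v1_A\cup\ell_u^{-1}\ell_v\ell_u1_{\bar{A}}\cup\ell_u\ell_v1_A\ell_u^{-1}\cup\ell_v\ell_u1_{\bar{A}}\ell_u^{-1}$. The disjointness $\ell_u\ell_v\cap\ell_v\ell_u=\emptyset$ kills the two middle terms' intersections with $\ell_v$, and comparing cardinalities ($|A|$ pairs from the first term, at most $|\bar{A}|$ from the last, against the $|S|$ pairs of $\ell_v$) forces $\ell_u1_{\bar{A}}\ell_u^{-1}=1_{\bar{A}}$, i.e.\ $\ell_u(\bar{A})=\bar{A}$, with $\ell_v$ by symmetry. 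This uses the full relational identity \eqref{jordan} together with $\ell_{u^t}=\ell_u^{-1}$ exactly as you anticipated, but through cardinalities of whole relations rather than case analysis at a single point --- information invisible to any pointwise argument. To repair your proof you would need to import such a counting step; as written, part (b) stands unproven.
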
 
	
	\begin{proof} 
    If $u$ and $v$ commute, then we are done by Proposition~\ref{commute}(a). So for the balance of the proof we assume $u\diamond v\ne v\diamond u$. 
    
    {\sl Proof of \ref{conj_a}:} By~\eqref{jordan}, $\ell_u \ell_v\cup \ell_v \ell_u = \ell_{u\diamond v}\cup \ell_{v\diamond u}$ while by~\eqref{c-assoc},  $(u\diamond v)\diamond w \in \{u\diamond (v\diamond w), v\diamond (u\diamond w) \}$. More precisely, 
		\[
		(u\diamond v)\diamond w = \left\{
		\begin{array}{rl}
			u\diamond (v\diamond w), & w \in \mathfrak a,\\
			v\diamond (u\diamond w), & w \in \overline{\mathfrak a}.
		\end{array}
		\right.
		\]
		But this is equivalent to $\ell_{u\diamond v} = \ell_u \ell_v 1_{\mathfrak a}\cup \ell_v \ell_u 1_{\overline{\mathfrak a}}$. 
        \smallskip
        
       {\sl Proof of \ref{conj_b}:} From (a) and the fact that
		 $u^t\diamond (u\diamond v) = v$, it follows that $\ell_v\subseteq \ell_{u^t}\ell_{u\diamond v}\cup \ell_{u\diamond v}\ell_{u^t}$. Taking into account that $\ell_{u^t}=\ell_u^{-1}$ we now obtain
		\[
		\ell_v\subseteq \ell_u^{-1}(\ell_u \ell_v 1_{\mathfrak a}\cup \ell_v \ell_u 1_{\overline{\mathfrak a}})\cup (\ell_u \ell_v 1_{\mathfrak a}\cup \ell_v \ell_u 1_{\overline{\mathfrak a}})\ell_u^{-1}
		\]
		which  simplifies to 	                  
        \begin{equation}\label{231124a}
			\ell_v\subseteq 
			\ell_v 1_{\mathfrak a} \cup \ell_u^{-1} \ell_v \ell_u 1_{\overline{\mathfrak a}} \cup \ell_u \ell_v 1_{\mathfrak a}\ell_u^{-1}\cup \ell_v \ell_u 1_{\overline{\mathfrak a}}\ell_u^{-1}.
		\end{equation}

		Below we determine the cardinality of the intersection of $\ell_v$ with each set appearing in~\eqref{231124a}.
		\[
		\begin{array}{ll}
			|\ell_v 1_{\mathfrak a}\cap \ell_v|=|\mathfrak a| & \ \\
			|\ell_u^{-1} \ell_v \ell_u 1_{\overline{\mathfrak a}}\cap \ell_v|\leq |\ell_u^{-1} \ell_v \ell_u \cap \ell_v| = |\ell_v \ell_u\cap \ell_u \ell_v|=0  \Rightarrow & |\ell_u^{-1} \ell_v \ell_u 1_{\overline{\mathfrak a}}\cap \ell_v| = 0\\
			|\ell_u \ell_v 1_{\mathfrak a}\ell_u^{-1}\cap \ell_v|\leq |\ell_u \ell_v \ell_u^{-1}\cap \ell_v|=|\ell_u \ell_v\cap \ell_v \ell_u|=0  \Rightarrow &
			|\ell_u \ell_v 1_{\mathfrak a} \ell_u^{-1}\cap \ell_v|=0\\
			|\ell_v \ell_u 1_{\overline{\mathfrak a}}\ell_u^{-1}\cap \ell_v|=|	\ell_v \ell_u 1_{\overline{\mathfrak a}}\cap \ell_v \ell_u| = |\overline{\mathfrak a}| & \ 
		\end{array}
        \] 
		Together, this  implies $(\ell_v 1_{\mathfrak a}\cup \ell_v \ell_u 1_{\overline{\mathfrak a}}\ell_u^{-1})\cap \ell_v =\ell_v$ which in turn yields $\ell_u 1_{\overline{\mathfrak a}}\ell_u^{-1}=1_{\overline{\mathfrak a}}$,  or equivalently, $\ell_u(\overline{\mathfrak a})=\overline{\mathfrak a}$. Similarly, we obtain the equality $\ell_v(\overline{\mathfrak a})=\overline{\mathfrak a}$ from  $u\leftrightarrow v$ symmetry.
	\end{proof}
	It follows from Proposition~\ref{part}(b) that $u\diamond \mathfrak a({u,v})= \mathfrak a({u,v})$ and $v\diamond \mathfrak a({u,v})=\mathfrak a({u,v})$. Together with $1_\Omega\in \mathfrak a({u,v})$, we see that the subloop of $(S,\diamond)$ generated by $u$ and $v$ is contained in $\mathfrak a({u,v})$. In particular, $u,v\in \mathfrak a({u,v})$. 
	
	\begin{prop}\label{inverse}
		For all $u,v\in S$ one has $(v\diamond u)^t = u^t\diamond v^t$.
	\end{prop}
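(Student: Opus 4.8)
My plan is to read the identity $(v\diamond u)^t=u^t\diamond v^t$ as an anti-automorphic inverse property and to establish it through the left translations $\ell_a$. Since $\ell_a(1_\Omega)=a\diamond 1_\Omega=a$, the map $a\mapsto\ell_a$ is injective, so it suffices to prove $\ell_{(v\diamond u)^t}=\ell_{u^t\diamond v^t}$; here I would use $\ell_{(v\diamond u)^t}=\ell_{v\diamond u}^{-1}$, which holds because $\ell_{a^t}=\ell_a^{-1}$. When $u$ and $v$ commute the claim is immediate: then $u^t,v^t$ commute as well, $A_{u^t,v^t}=S$ by Proposition~\ref{commute}(a), and one checks $(u^t\diamond v^t)\diamond(v\diamond u)=u^t\diamond(v^t\diamond(v\diamond u))=u^t\diamond u=1_\Omega$ (left-associativity, then Proposition~\ref{loop}(3),(2)), identifying $u^t\diamond v^t$ as the left inverse $(v\diamond u)^t$. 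So the substance lies in the case $u\diamond v\neq v\diamond u$, treated next.

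Put $A:=A_{u,v}$ and $A':=A_{u^t,v^t}$. Applying Proposition~\ref{part}(a) to the pair $(v,u)$ and inverting (legitimate since $A$ is $\ell_u,\ell_v$-invariant by Proposition~\ref{part}(b)), and applying the same proposition to $(u^t,v^t)$ with $\ell_{u^t}=\ell_u^{-1}$, $\ell_{v^t}=\ell_v^{-1}$, I would record
\[
\ell_{(v\diamond u)^t}=\ell_u^{-1}\ell_v^{-1}1_{A}\cup\ell_v^{-1}\ell_u^{-1}1_{\bar A},\qquad
\ell_{u^t\diamond v^t}=\ell_u^{-1}\ell_v^{-1}1_{A'}\cup\ell_v^{-1}\ell_u^{-1}1_{\bar{A'}},
\]
while $\ell_{v^t\diamond u^t}$ has the two parts interchanged (i.e. $\ell_v^{-1}\ell_u^{-1}$ placed over $A'$). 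By Proposition~\ref{commute}(b) the permutations $\ell_u^{-1}\ell_v^{-1}=(\ell_v\ell_u)^{-1}$ and $\ell_v^{-1}\ell_u^{-1}=(\ell_u\ell_v)^{-1}$ are disjoint as relations, so all these relations are splittings of the same disjoint pair over complementary sets. Since $\ell_{(v\diamond u)^t}$ is a single basic relation of the scheme $\{\ell_s\}_{s\in S}$ contained in $\ell_u^{-1}\ell_v^{-1}\cup\ell_v^{-1}\ell_u^{-1}=\ell_{u^t\diamond v^t}\cup\ell_{v^t\diamond u^t}$ (the equality being \eqref{jordan} for $u^t,v^t$), and distinct basic relations are disjoint, it must equal one of $\ell_{u^t\diamond v^t},\ell_{v^t\diamond u^t}$. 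Running the same reasoning for $\ell_{(u\diamond v)^t}$ and using $(v\diamond u)^t\neq(u\diamond v)^t$, $u^t\diamond v^t\neq v^t\diamond u^t$, I obtain $\{(v\diamond u)^t,(u\diamond v)^t\}=\{u^t\diamond v^t,v^t\diamond u^t\}$.

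The decisive step, and the only place where I expect real difficulty, is to determine which of the two matchings occurs. Comparing the formulas, the $\ell_u^{-1}\ell_v^{-1}$-portion of $\ell_{(v\diamond u)^t}$ sits over $A$, that of $\ell_{u^t\diamond v^t}$ over $A'$, and that of $\ell_{v^t\diamond u^t}$ over $\bar{A'}$; since $\ell_u^{-1}\ell_v^{-1}$ and $\ell_v^{-1}\ell_u^{-1}$ differ at every point, this gives
\[
(v\diamond u)^t=u^t\diamond v^t\iff A=A',\qquad (v\diamond u)^t=v^t\diamond u^t\iff A=\bar{A'}.
\]
Thus the whole proposition reduces to excluding $A'=\bar A$, and this is settled by one observation: $1_\Omega$ lies in both $A_{u,v}=A$ and $A_{u^t,v^t}=A'$, so $1_\Omega\in A\cap A'$, which is incompatible with $A'=\bar A$. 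Hence $A=A'$ and $(v\diamond u)^t=u^t\diamond v^t$. Everything apart from this final tie-break is routine bookkeeping with the translation maps $\ell_a$ and Propositions~\ref{commute} and~\ref{part}.
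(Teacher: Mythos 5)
Your proof is correct, but it takes a genuinely different route from the paper's. The paper argues entirely at the element level: from Proposition~\ref{part}(b) and the left inverse property it deduces $u^t\diamond A_{u,v}=A_{u,v}$ and $v^t\diamond A_{u,v}=A_{u,v}$, so that $1_\Omega\in A_{u,v}$ forces $u^t\diamond v^t\in A_{u,v}=A_{v,u}$; it then computes directly $(v\diamond u)\diamond(u^t\diamond v^t)=v\diamond\bigl(u\diamond(u^t\diamond v^t)\bigr)=v\diamond v^t=1_\Omega$ and identifies $(v\diamond u)^t$ by uniqueness of the solution of $(v\diamond u)\diamond x=1_\Omega$ --- a uniform three-line computation with no case distinction. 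You instead work at the level of relations: you invert the splitting formula of Proposition~\ref{part}(a) for the pair $(v,u)$ (correctly flagging that the invariance in Proposition~\ref{part}(b) is what keeps the restriction sets $A,\bar A$ intact under inversion), compare with the analogous formulas for $(u^t,v^t)$ and $(v^t,u^t)$, use disjointness of $\ell_u\ell_v$ and $\ell_v\ell_u$ from Proposition~\ref{commute}(b) together with disjointness of basic relations to reduce to the two matchings $A=A'$ versus $A=\bar{A'}$, and break the tie by $1_\Omega\in A\cap A'$. Both arguments ultimately rest on the same two pillars --- Proposition~\ref{part} and $1_\Omega\in A_{u,v}$ --- but yours makes visible the ``two possible matchings'' structure and why the loop identity excludes the wrong one, at the cost of a case split and heavier bookkeeping, while the paper's buys brevity and uniformity. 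One small assertion you should justify: that commutation transfers between $(u,v)$ and $(u^t,v^t)$ (used in your commuting case and, contrapositively, in the non-commuting one); this is a one-liner --- by Proposition~\ref{commute}(a), $u\diamond v=v\diamond u$ gives $\ell_u\ell_v=\ell_v\ell_u$, and inverting and evaluating at $1_\Omega$ yields $u^t\diamond v^t=v^t\diamond u^t$ --- so it is not a genuine gap.
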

	\begin{proof} Since  $u\diamond \mathfrak a({u,v})= \mathfrak a({u,v})$, it follows that $u^t\diamond(u\diamond \mathfrak a({u,v}))=u^t\diamond \mathfrak a({u,v})$. By Proposition~\ref{loop}\ref{loop_c}, $u^t\diamond(u\diamond \mathfrak a({u,v})) = \mathfrak a({u,v})$ whence $u^t\diamond \mathfrak a(u,v) = \mathfrak a({u,v})$. Analogously, $v^t\diamond \mathfrak a(u,v) = \mathfrak a({u,v})$.  Now since  $1_\Omega\in \mathfrak a(u,v)$, we obtain $u^t\diamond v^t\in \mathfrak a({u,v})=\mathfrak a({v,u})$. Therefore $(v\diamond u)\diamond (u^t\diamond v^t) = 
		v\diamond (u\diamond (u^t\diamond v^t)) = v\diamond v^t =1_\Omega$. Together with $(v\diamond u)\diamond (v\diamond u)^t=1_\Omega$, we conclude that $(v\diamond u)^t = u^t\diamond v^t$ as desired.
	\end{proof}
	
	\begin{prop}%\label{alt} 
    The loop $(S,\diamond)$ is alternative, that is to say, for all $u,v\in S$ we have 
		\[
		\begin{array}{rclc}
			(u\diamond u)\diamond v & = & u\diamond (u\diamond v) & (L)\\
			(v \diamond u)\diamond u & = & v\diamond (u\diamond u) & (R)\\
			(u \diamond v)\diamond u & = & u\diamond (v\diamond u) & (M)\\
		\end{array}
		\]
	\end{prop}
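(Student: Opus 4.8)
The plan is to recognise that each of the three alternative laws is, after unwinding the notation, simply the assertion that one specific element lies in one of the ``associativity sets'' $A_{u,v}=\{w\in S\mid u\diamond(v\diamond w)=(u\diamond v)\diamond w\}$ defined just before Proposition~\ref{commute}. Concretely: taking $w=u$ in the defining condition of $A_{u,v}$ turns the membership $u\in A_{u,v}$ into $u\diamond(v\diamond u)=(u\diamond v)\diamond u$, which is exactly the middle law $(M)$; taking $w=u$ in $A_{v,u}$ turns $u\in A_{v,u}$ into $v\diamond(u\diamond u)=(v\diamond u)\diamond u$, which is the right law $(R)$; and the left law $(L)$ is the statement that $A_{u,u}=S$.

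I would dispose of $(L)$ first, directly from~\eqref{c-assoc}: substituting $v:=u$ collapses both two-element sets in~\eqref{c-assoc} to singletons and yields $u\diamond(u\diamond w)=(u\diamond u)\diamond w$ for every $w$, which is precisely $(L)$ (this is the left-alternative property already noted right after~\eqref{c-assoc}). For $(M)$ and $(R)$ I would invoke the consequence drawn immediately after Proposition~\ref{part}, namely that the subloop of $(S,\diamond)$ generated by $u$ and $v$ is contained in $A_{u,v}$, so in particular $u\in A_{u,v}$. Since $A_{u,v}=A_{v,u}$ (recorded when the sets were introduced), this single membership reads both as $u\in A_{u,v}$, giving $(M)$, and as $u\in A_{v,u}$, giving $(R)$.

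The substantive work has in fact already been carried out in Proposition~\ref{part}(b): the invariance of $A_{u,v}$ under the left translations $\ell_u$ and $\ell_v$ is what forces the generated subloop into $A_{u,v}$, and hence puts $u$ (and $v$) there. Given that, the present statement is essentially bookkeeping, and I expect no genuine obstacle. The only point that demands a little care is keeping track of which diagonal position is being repeated, so that the superficially similar laws $(M)$ and $(R)$ are correctly matched to $u\in A_{u,v}$ and $u\in A_{v,u}$ respectively, rather than conflated.
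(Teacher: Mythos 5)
Your proposal is correct and follows exactly the paper's own argument: $(L)$ is read off from~\eqref{c-assoc} by setting $v=u$, while $(M)$ and $(R)$ come from the membership $u\in A_{u,v}=A_{v,u}$, which the paper obtains from Proposition~\ref{part}(b) together with $1_\Omega\in A_{u,v}$ just as you describe. Your careful matching of $(M)$ to $u\in A_{u,v}$ and $(R)$ to $u\in A_{v,u}$ is precisely the bookkeeping the paper's terse proof leaves implicit.
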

	\begin{proof}
		Equality (L) follows from equation~\eqref{c-assoc}, while  (R) and (M) follow from the inclusion $u\in \mathfrak a({u,v})=\mathfrak a({v,u})$. 		
	\end{proof}
	
	\begin{prop}\label{moufang}  $(S,\diamond)$ is a Moufang loop.
	\end{prop}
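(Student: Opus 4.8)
The plan is to reduce the Moufang property to a single operator identity for the left translations $\ell_a\colon x\mapsto a\diamond x$, namely the \emph{left Bol identity}
\[
\ell_u\ell_v\ell_u=\ell_{u\diamond(v\diamond u)}\qquad(u,v\in S),
\]
which, evaluated at $w$, reads $u\diamond(v\diamond(u\diamond w))=(u\diamond(v\diamond u))\diamond w$; here $u\diamond(v\diamond u)=(u\diamond v)\diamond u$ by flexibility (Proposition~\ref{alt}(M)). Granting this, the corresponding right Bol identity for $r_a\colon x\mapsto x\diamond a$ comes for free. Indeed $s^t=s^{-1}$, and by Proposition~\ref{inverse} the inverse map $t\colon x\mapsto x^t$ is an anti-automorphism of $(S,\diamond)$, so $r_a=t\,\ell_{a^t}\,t$; conjugating the left Bol identity by $t$ (using $t^2=1_S$ and $(u\diamond v)^t=v^t\diamond u^t$) gives $r_ur_vr_u=r_{(u\diamond v)\diamond u}$. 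A loop satisfying both the left and the right Bol identities is Moufang, which finishes the proof.

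To establish the left Bol identity I would compute $\ell_u\ell_v\ell_u$ via Proposition~\ref{part}. Write $A:=A_{u,v}$, $\bar A:=S\setminus A$ and $c:=v\diamond u$. By Proposition~\ref{part}(a) applied to the pair $(v,u)$ (recalling $A_{v,u}=A_{u,v}$) we have $\ell_c=\ell_v\ell_u\,1_{A}\cup\ell_u\ell_v\,1_{\bar A}$, while Proposition~\ref{part}(b) shows that $\ell_u$ preserves both $A$ and $\bar A$. Substituting and using these invariances yields
\[
\ell_u\ell_v\ell_u=\ell_u\ell_c\,1_{A}\cup\ell_c\ell_u\,1_{\bar A}.
\]
On the other hand, Proposition~\ref{part}(a) applied to the pair $(u,c)$, together with $u\diamond c=u\diamond(v\diamond u)=:m$, gives
\[
\ell_{m}=\ell_u\ell_c\,1_{A_{u,c}}\cup\ell_c\ell_u\,1_{\overline{A_{u,c}}}.
\]
Comparing the two expressions, and using Proposition~\ref{commute} (so that either $u,c$ commute and the identity is trivial, or $\ell_u\ell_c$ and $\ell_c\ell_u$ are disjoint), we see that $\ell_u\ell_v\ell_u=\ell_{m}$ holds \emph{if and only if} $A_{u,v}=A_{u,\,v\diamond u}$.

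Thus everything reduces to the invariance lemma $A_{u,v}=A_{u,\,v\diamond u}$ (equivalently $A_{u,v}=A_{u,\,u\diamond v}$), which I expect to be the main obstacle. A pure identity-chase is bound to be circular: unwinding either inclusion through \eqref{c-assoc} and the alternative laws turns it back into the left Bol identity one is trying to prove (for instance, for $w\in A_{u,v}$ one computes $w\in A_{u,u\diamond v}\iff w\in A_{u\diamond u,v}$, and so on indefinitely). Consequently the lemma should be attacked by a counting argument in the spirit of the proof of Proposition~\ref{part}(b): both $A_{u,v}$ and $A_{u,\,v\diamond u}$ are $\ell_u$-invariant, both contain the subloop $\langle u,v\rangle=\langle u,\,v\diamond u\rangle$ (the latter because $v$ is the right quotient of $v\diamond u$ by $u$, which exists by Proposition~\ref{loop}(4)), and every relation in play is a genuine permutation of $S$. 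Intersecting $\ell_v$ (or an appropriate translate) with the decompositions above and comparing the resulting cardinalities should force the two sets, and hence their complements, to coincide. Once this cardinality count closes, the left Bol identity follows, and with it the Moufang property of $(S,\diamond)$.
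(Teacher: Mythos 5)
Your overall frame is sound and matches the paper's: reduce Moufang to left Bol plus right Bol, and obtain right Bol from left Bol by conjugating with the inversion map $\tau\colon x\mapsto x^t$, which is an anti-automorphism by Proposition~\ref{inverse}; that half of your argument is essentially identical to the paper's and correct. Your decomposition of $\ell_u\ell_v\ell_u$ via Proposition~\ref{part} also checks out: with $c:=v\diamond u$ one does get $\ell_u\ell_v\ell_u=\ell_u\ell_c\,1_{A_{u,v}}\cup\ell_c\ell_u\,1_{\overline{A_{u,v}}}$ (the $\bar{A}$ half uses~\eqref{c-assoc} at the point $u\diamond w$ together with $\ell_u(\overline{A_{u,v}})=\overline{A_{u,v}}$), and comparing with $\ell_{u\diamond c}=\ell_u\ell_c\,1_{A_{u,c}}\cup\ell_c\ell_u\,1_{\overline{A_{u,c}}}$ and Proposition~\ref{commute} correctly reduces the left Bol instance to $A_{u,v}=A_{u,\,v\diamond u}$ when $u$ and $c$ do not commute.

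The genuine gap is that this key lemma is never proved. You yourself concede that identity-chasing is circular, and the counting argument is only conjectured (``should force,'' ``I expect''), not carried out; note that the count in Proposition~\ref{part}(b) worked because $\ell_v$ was trapped inside an explicit union of four pieces, three of which were shown disjoint from $\ell_v$ by the anticommutation $\ell_u\ell_v\cap\ell_v\ell_u=\emptyset$, and you have exhibited no analogous inclusion relating the two partitions $\{A_{u,v},\overline{A_{u,v}}\}$ and $\{A_{u,c},\overline{A_{u,c}}\}$. Indeed, since your lemma is \emph{equivalent} to the left Bol identity (in the non-commuting case), any argument for it carries the full weight of the proposition, which suggests the counting route attacks the statement from the wrong end. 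The paper sidesteps the whole issue with one algebraic observation (Proposition~\ref{abc+cba}): the linearized Jordan identity $a\star(b\star c)+b\star(a\star c)-(a\star b)\star c=\tfrac12(acb+bca)$, with $b=a$, shows $aca\in\mathcal{J}$ for any Jordan subalgebra; hence $\und{\ell_y}\cdot\und{\ell_z}\cdot\und{\ell_y}$ lies in the adjacency algebra, and since it is a permutation matrix lying in the span of the pairwise disjoint $0,1$-matrices $\und{\ell_s}$, it must equal a single basic matrix $\und{\ell_w}$. Evaluating at the loop identity $1_S$ then pins down $w=y\diamond(z\diamond y)$, giving left Bol directly. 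Replacing your unproved lemma by this observation (or, equivalently, deducing $A_{u,v}=A_{u,\,v\diamond u}$ \emph{from} it) would close your argument.
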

	\begin{proof}
		A loop is Moufang iff it is simultaneously a left and right Bol loop. 
		We start by showing that $(S,\diamond)$ is a left Bol loop. To this  end, we must prove the identity 
        \begin{equation}\label{lbol}
        y\diamond (z\diamond(y\diamond x))=(y\diamond(z\diamond y))\diamond x. 
        \end{equation}
        But this identity is equivalent to $\ell_y \ell_z \ell_y = \ell_{y\diamond(z\diamond y)}$. To establish the latter identity, we first note that $\cJ:=\langle \und{\ell}_s\rangle_{s\in S}$ is $\star$-closed. Therefore $\und{\ell}_y\und{\ell}_z\und{\ell}_y = 2(\und{\ell}_y\star(\und{\ell}_z\star\und{\ell}_y) - (\und{\ell}_y\star \und{\ell}_y)\star \und{\ell}_z)\in \cJ$. This implies the permutation matrix $\und{\ell}_y\und{\ell}_z\und{\ell}_y$ is a linear combination of pairwise disjoint permutation matrices $\und{\ell}_s,s\in S$. But this can only occur if $\und{\ell}_y\und{\ell}_z\und{\ell}_y =\und{\ell}_w$ for some $w\in S$, or equivalently, $\ell_y\ell_z\ell_y=\ell_w$. We further argue that since $(S,\{\ell_s\}_{s\in S})$ is a Jordan scheme, we have $\ell_w = \ell_{y\diamond(z\diamond y)}$ if and only if $\ell_w(1_S) = \ell_{y\diamond(z\diamond y)}(1_S)$. 
        (We have here used the fact that in a Jordan scheme two relations  are either equal or disjoint.)  We now compute
        \[
        \ell_w(1_S) = \ell_y(\ell_z(\ell_y(1_S))) = y\diamond(z\diamond y) = \ell_{y\diamond(z\diamond y)}(1_S).
        \]
        Thus identity~\eqref{lbol} holds, and $(S,\diamond)$ is indeed a left Bol loop.

        To see that $(S,\diamond)$ is a right Bol loop, we must establish the  identity
        \begin{equation}\label{rbol}
            ((x\diamond y)\diamond z)\diamond y = x\diamond ((y\diamond z)\diamond y).
        \end{equation}
        Similarly to the above, this identity is equivalent to  $r_y r_z r_y = r_{(y\diamond z)\diamond y}$. From Proposition~\ref{inverse} it follows that $r_z = \tau \ell_{z^t} \tau$ where $\tau\in\mathsf{Sym}(S)$ is the involutory transpose permutation on $S$, i.e.\ $\tau(x)=x^t$. Since $(S,\diamond)$ is a left Bol loop, we have $\ell_{y^t}\ell_{z^t}\ell_{y^t} = \ell_{y^t\diamond(z^t\diamond y^t)}$.  Conjugating both sides of this identity by $\tau$, we obtain on one hand
        \[
        \tau \ell_{y^t}\ell_{z^t}\ell_{y^t} \tau = (\tau\ell_{y^t}\tau)(\tau\ell_{z^t}\tau)(\tau\ell_{y^t}\tau) = r_y r_z r_y,
        \]
        while on the other hand
        \[
        \tau \ell_{y^t\diamond(z^t\diamond y^t)}\tau = \tau \ell_{((y\diamond z)\diamond y)^t}\tau = r_{(y\diamond z)\diamond y}.
        \]
     Thus identity~\eqref{rbol} holds, and $(S,\diamond)$ is a right Bol loop as well.%  
	\end{proof}
	
	\begin{prop}%\label{symmetric}
		Let $(\Omega, S)$ be a thin symmetric Jordan scheme.  Then $S$ is an abelian $2$-group and $(\Omega, S)$ is a symmetric association scheme.
	\end{prop}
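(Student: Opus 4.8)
The plan is to use the symmetry hypothesis to collapse the Moufang loop $(S,\diamond)$ all the way down to an elementary abelian $2$-group, and then to read off the association-scheme property from the resulting multiplicativity of the left translations.

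The first observation is what symmetry buys us at the loop level. By Proposition~\ref{SH-algebras}(a) every basic relation $s\in S$ satisfies $s=s^t$, and since $s^t=s^{-1}$ in the regular thin setting, each $s$ is an involution. By Proposition~\ref{loop}(2) this reads $s\diamond s = s^t\diamond s = 1_\Omega$, so every element of $(S,\diamond)$ is its own inverse. Commutativity is then immediate from Proposition~\ref{inverse}: feeding $u^t=u$, $v^t=v$ into $(v\diamond u)^t = u^t\diamond v^t$ gives $(v\diamond u)^t = u\diamond v$, while $v\diamond u\in S$ is itself symmetric, so $(v\diamond u)^t=v\diamond u$; hence $v\diamond u = u\diamond v$. (One could instead invoke diassociativity from Proposition~\ref{moufang}---any two elements generate an exponent-$2$, hence abelian, group---but the inverse-property route is shorter.)

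Associativity then drops out of~\eqref{c-assoc}: once $u\diamond v = v\diamond u$, its right-hand side degenerates to the singleton $\{(u\diamond v)\diamond w\}$, forcing $u\diamond(v\diamond w)=(u\diamond v)\diamond w$ for all $u,v,w$. Thus $(S,\diamond)$ is a commutative, associative loop, i.e.\ an abelian group, and as every element is an involution it is an elementary abelian $2$-group (in particular an abelian $2$-group, as claimed).

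It remains to upgrade the Jordan scheme to a bona fide symmetric association scheme. With associativity in hand one has $\ell_u\ell_v=\ell_{u\diamond v}$ as permutations of $S$, so $\{\ell_s\}_{s\in S}$ is closed under composition and is a regular subgroup of $\sym{S}$. Hence the copy $(S,\{\ell_s\})$ of $(\Omega,S)$ furnished by Proposition~\ref{conj} is a thin association scheme, and transporting back along the isomorphism shows $(\Omega,S)$ is one too; it is symmetric by hypothesis. The only step needing any care is this last one---verifying that closure under ordinary composition, and not merely the Jordan product, really yields a coherent algebra---but this is precisely what $\ell_u\ell_v=\ell_{u\diamond v}$ delivers, so I anticipate no genuine obstacle: given the identities already established, the whole argument is a short chain of substitutions driven by the strong symmetry assumption.
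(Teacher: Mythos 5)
Your proof is correct, but it follows a genuinely more self-contained route than the paper's. The paper's proof is two lines: symmetry together with $s^t=s^{-1}$ makes $(S,\diamond)$ a loop of exponent two, and since Proposition~\ref{moufang} already established that $(S,\diamond)$ is Moufang, it simply invokes the well-known fact that a Moufang loop of exponent two is a group, hence an elementary abelian $2$-group; the association-scheme part of the conclusion is left implicit. You avoid the external loop-theoretic fact entirely: commutativity falls out of Proposition~\ref{inverse} combined with the symmetry of $v\diamond u$ itself, and then~\eqref{c-assoc} (equivalently, Proposition~\ref{commute}(a)) collapses to full associativity once its right-hand side becomes a singleton---so you never need the Moufang property at all, only identities the paper proves from scratch. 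You also verify the final claim explicitly: universal commutativity gives $\ell_u\ell_v=\ell_{u\diamond v}$, so $\{\ell_s\}_{s\in S}$ is a regular subgroup of $\sym{S}$, and transporting along the isomorphism of Proposition~\ref{conj} shows $S$ is a regular subgroup of $\sym{\Omega}$, i.e.\ $(\Omega,S)$ is a (symmetric) thin association scheme---a step the paper's proof omits. What the paper's approach buys is brevity via a standard citation; what yours buys is independence from that citation and a complete derivation of the association-scheme conclusion. One cosmetic remark: the identity $s^t=s^{-1}$ presupposes that the scheme is regular, which holds here either by the standing assumption of the subsection or because symmetric Jordan schemes are always regular (\cite{MuzReiKli22}, as quoted in Section~2); a clause to that effect would make the argument airtight, but this is not a gap.
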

	\begin{proof} Since $a^t=a$, each element of the loop $(S,\diamond)$ has order two. It is well known that a Moufang loop of exponent two is a group (cf.\ \cite[Proposition 2]{Che74}). Hence $(S,\diamond)$ is an elementary abelian $2$-group.
	\end{proof}
	
	\subsubsection{Relation to alternative algebras}
	
	A loop $(S,\diamond)$ is \emph{ring-alternative} (an RA-loop, for short \cite{CheGoo86}) if the loop ring $RS$ is an alternative ring for any commutative associative ring  $R$. It is known that every RA-loop is Moufang but the converse is not true \cite{CheGoo86}. RA-loops were characterized in \cite[Theorem 1]{CheGoo86} as follows: 
	\begin{theorem}\label{RA} A nonassociative loop $(S,\diamond)$ is an RA-loop if and only if the following hold:
		\begin{enumerate}[(a)]
			\item If three elements in $S$ associate in some order, then they associate in all orders.
			\item If three elements $u,v,w$ do not associate, then  $(u\diamond v)\diamond w=u\diamond (w\diamond v)=v\diamond (u\diamond w)$.
		\end{enumerate}
	\end{theorem}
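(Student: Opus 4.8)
The plan is to reduce the statement to a linear-algebra condition on the associator in the loop ring and then translate that condition into the two combinatorial requirements. Since $RS\cong R\otimes_{\Z}\Z S$ for every commutative associative ring $R$, and the left and right alternative laws $(a,a,b)=0$, $(a,b,b)=0$ are multilinear identities preserved under base change, it suffices to treat $R=\Z$; thus $(S,\diamond)$ is an RA-loop if and only if $\Z S$ is alternative. I would first recall that, because $\Z S$ is torsion-free, $\Z S$ being alternative is equivalent to its associator $(x,y,z):=(x\cdot y)\cdot z-x\cdot(y\cdot z)$, extended trilinearly, being an \emph{alternating} form: the left law linearizes to skew-symmetry in the first two arguments, the right law to skew-symmetry in the last two, and these two suffice because the adjacent transpositions generate $\mathsf{Sym}(\{1,2,3\})$ (repeated arguments then give zero since $2(x,x,y)=0$ in a torsion-free ring). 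By trilinearity this can be checked on the basis $S$, where products of loop elements are again basis vectors; so, comparing coefficients, alternation on $S$ becomes the pair of multiset identities
\begin{align*}
\{(u\diamond v)\diamond w,\ (v\diamond u)\diamond w\}&=\{u\diamond(v\diamond w),\ v\diamond(u\diamond w)\},\\
\{(u\diamond v)\diamond w,\ (u\diamond w)\diamond v\}&=\{u\diamond(v\diamond w),\ u\diamond(w\diamond v)\},
\end{align*}
for all $u,v,w\in S$; call these $(\mathrm{I})$ and $(\mathrm{II})$. These are exactly the $\mathsf{Sym}(\{1,2,3\})$-symmetrizations of the relation already encountered in~\eqref{c-assoc}.

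The next step is to show that $(\mathrm{I})$ and $(\mathrm{II})$ together are equivalent to conditions~(1) and~(2), splitting into the associating and non-associating cases. If $u,v,w$ associate, i.e.\ $(u\diamond v)\diamond w=u\diamond(v\diamond w)$, then cancelling this common element in $(\mathrm{I})$ yields $(v\diamond u)\diamond w=v\diamond(u\diamond w)$ and in $(\mathrm{II})$ yields $(u\diamond w)\diamond v=u\diamond(w\diamond v)$; that is, association is inherited under the swaps corresponding to the transpositions $(1\,2)$ and $(2\,3)$. As these generate $\mathsf{Sym}(\{1,2,3\})$, association propagates to all six orders, which is precisely condition~(1). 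If $u,v,w$ do \emph{not} associate, then $(u\diamond v)\diamond w\neq u\diamond(v\diamond w)$, so the matching in $(\mathrm{I})$ and $(\mathrm{II})$ forces the cross-identifications $(u\diamond v)\diamond w=v\diamond(u\diamond w)$ and $(u\diamond v)\diamond w=u\diamond(w\diamond v)$, which is exactly condition~(2).

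For the converse I would run the same case analysis backwards. Condition~(1) makes non-association a symmetric property of the unordered triple, so condition~(2) may be applied to any reordering of a non-associating triple; chaining the resulting identities reproduces $(\mathrm{I})$ and $(\mathrm{II})$ in both the associating and the non-associating cases. This re-establishes alternation of the associator on $S$, hence alternativity of $\Z S$, hence of every $RS$, completing the equivalence.

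The main obstacle I anticipate is bookkeeping rather than conceptual, and it is concentrated in the non-associating case: one must ensure that the two loop elements on each side of $(\mathrm{I})$ and $(\mathrm{II})$ are genuinely distinct, so that the cross-identifications are the only admissible matching. Here the delicate point is that non-association already forces $u\diamond v\neq v\diamond u$ (otherwise the left multiset of $(\mathrm{I})$ is a doubled element, and multiset equality would force $u\diamond(v\diamond w)=v\diamond(u\diamond w)=(u\diamond v)\diamond w$, i.e.\ association), and similarly for the other relevant pairs; once this is checked the matchings are unambiguous. The reduction from ``all $R$'' to $R=\Z$ and the passage, via torsion-freeness, between the alternating associator and the integer-coefficient multiset identities each deserve a careful line but present no real difficulty.
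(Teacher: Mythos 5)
The paper gives no proof of this statement at all: Theorem~\ref{RA} is quoted verbatim from \cite[Theorem 1]{CheGoo86}, so your attempt can only be compared with Chein and Goodaire's original argument --- and it follows essentially that route: reduce alternativity of the loop ring to the associator identities evaluated on basis elements, and compare coefficients. Your case analysis is sound, and note that the ``delicate point'' you flag at the end is automatic: for two-element multisets, $\{a,b\}=\{c,d\}$ with $a\neq c$ already forces $a=d$ and $b=c$, because $a=b$ would give $c=d=a$, contradicting $a\neq c$; no separate distinctness check is needed. Your identity $(\mathrm{I})$ is precisely the relation~\eqref{c-assoc} from the paper, and $(\mathrm{II})$ is its analogue coming from skew-symmetry in the last two arguments, so the translation to the loop is correct.

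Two points do need repair. First, the alternative laws $(a,a,b)=0$ and $(a,b,b)=0$ are \emph{not} multilinear (they are quadratic in the repeated variable), so the assertion that they are ``multilinear identities preserved under base change'' is wrong as stated. What passes from $\Z S$ to $R\otimes_{\Z}\Z S$ are the linearized skew-symmetries \emph{together with} the diagonal basis identities $(s\diamond s)\diamond u=s\diamond(s\diamond u)$ and $(u\diamond s)\diamond s=u\diamond(s\diamond s)$; the latter are genuinely needed when $R$ has $2$-torsion (e.g.\ $R=\Z_2$), where skew-symmetry of the associator does not imply its vanishing on repeated arguments. Both ingredients do follow from alternativity of $\Z S$ (the diagonal identities directly, the skew ones by linearizing $(x+y,x+y,z)=0$), so your reduction to $R=\Z$ is correct, merely misjustified. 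Second, in the converse direction you must also verify $(\mathrm{I})$ and $(\mathrm{II})$ for triples with repeated entries, since $(\mathrm{I})$ with $u=v$ is exactly the left alternative law and is not covered by the generic ``chaining'' of condition~(2). This is a one-line fix: if the triple $u,u,w$ failed to associate, condition~(2) applied to it would yield $(u\diamond u)\diamond w=u\diamond(w\diamond u)=u\diamond(u\diamond w)$, whose outer equality \emph{is} association --- a contradiction; hence repeated triples always associate, condition~(1) spreads this to all orders, and $(\mathrm{I})$, $(\mathrm{II})$ hold for them termwise. With these two repairs your proof is complete and is, in substance, the standard proof of the cited result.
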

	\begin{theorem}%\label{main} 
        Let $(S,\diamond)$ be a loop. The permutations $\{\ell_a\}_{a\in S}$ form a thin Jordan scheme if and only if $(S,\diamond)$ is an RA-loop.
	\end{theorem}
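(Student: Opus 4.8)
The plan is to route the whole equivalence through the classical fact that $S$ is an RA-loop exactly when the loop algebra $\F S$ is an alternative algebra, and to exploit the \emph{linearized} alternative laws. Throughout I identify each basic relation $\ell_a$ with the left-translation operator $\ell_a\colon x\mapsto a\diamond x$ on $\F S$, so that $\langle\underline{\ell_a}\rangle_{a\in S}$ is the image of the regular representation. The point of entry is that $(S,\{\ell_a\})$ is a thin Jordan scheme precisely when the operator identity
\[
\forall_{u,v\in S}\colon\quad \ell_u\ell_v+\ell_v\ell_u=\ell_{u\diamond v}+\ell_{v\diamond u}
\]
holds, together with transpose-closure $\ell_a^t=\ell_{a^t}$ and the presence of $\ell_{1_\Omega}=I$ (the $\circ$-orthogonality, the partition of $S^2$, and $\sum_a\underline{\ell_a}=J$ being automatic for a loop). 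This displayed identity is exactly~\eqref{jordan} rewritten as operators, and conversely it yields $\underline{\ell_u}\star\underline{\ell_v}=\tfrac12(\underline{\ell_{u\diamond v}}+\underline{\ell_{v\diamond u}})\in\langle\underline{\ell_a}\rangle$; moreover it is precisely the left-alternative law $x(xy)=x^2y$ read off on the basis $S$. Both implications will therefore come from passing between this identity on basis elements and the polynomial identity on all of $\F S$.

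For the direction ``RA $\Rightarrow$ thin JS'' I would argue as follows. If $S$ is an RA-loop then $\F S$ is alternative, hence left-alternative: $\ell_x^2=\ell_{x^2}$ for every $x\in\F S$. Linearizing $x\mapsto u+v$ and using $\operatorname{char}\F\neq2$ gives $\ell_u\ell_v+\ell_v\ell_u=\ell_{u\diamond v}+\ell_{v\diamond u}$ on $S$, hence closure under $\star$. Since every RA-loop is Moufang~\cite{CheGoo86}, it has the two-sided inverse property, so $\ell_a^t=\ell_a^{-1}=\ell_{a^t}$, the rainbow is transpose-closed, and $(S,\{\ell_a\})$ is a thin Jordan scheme.

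For ``thin JS $\Rightarrow$ RA'' I would reverse the mechanism. From~\eqref{jordan} I have $\ell_u\ell_v+\ell_v\ell_u=\ell_{u\diamond v}+\ell_{v\diamond u}$ on the basis; extending bilinearly and setting the two arguments equal gives $\ell_x^2=\ell_{x^2}$ for all $x\in\F S$, i.e.\ $\F S$ is left-alternative. To get right-alternativity I use that the scheme already forces $(S,\diamond)$ to be Moufang (Proposition~\ref{moufang}) with anti-automorphic inverse $(v\diamond u)^t=u^t\diamond v^t$ (Proposition~\ref{inverse}); writing $\tau$ for the transposing involution, the right translations satisfy $r_a=\tau\ell_{a^t}\tau$, whence
\[
r_ur_v+r_vr_u=\tau(\ell_{u^t}\ell_{v^t}+\ell_{v^t}\ell_{u^t})\tau=\tau(\ell_{u^t\diamond v^t}+\ell_{v^t\diamond u^t})\tau=r_{u\diamond v}+r_{v\diamond u}.
\]
The same bilinear-and-diagonal step yields $r_x^2=r_{x^2}$, so $\F S$ is also right-alternative, and therefore alternative. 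I would then invoke the standard fact (Artin) that in an alternative algebra the associator $(x,y,z)=(xy)z-x(yz)$ is alternating in its arguments: on basis elements this makes $(a\diamond b)\diamond c=a\diamond(b\diamond c)$ a property of the unordered triple, which is condition~(1) of Theorem~\ref{RA}; and comparing the two-term identities $(u,v,w)=-(u,w,v)$ and $(u,v,w)=-(v,u,w)$ for a non-associating triple forces $(u\diamond v)\diamond w=u\diamond(w\diamond v)=v\diamond(u\diamond w)$, which is condition~(2). By Theorem~\ref{RA} the non-associative case is an RA-loop, while an associative $(S,\diamond)$ is simply a group and hence trivially RA.

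The step I expect to be the crux is the passage from the Jordan-scheme identity, which a priori only constrains the \emph{left} translations, to genuine right-alternativity of $\F S$: this is exactly where the Moufang property and the inverse-reversal law (Propositions~\ref{moufang} and~\ref{inverse}) are indispensable, since without them the scheme ``sees'' only one side of the multiplication. A secondary point requiring care is the extraction of the purely combinatorial conditions of Theorem~\ref{RA} from the analytic statement that $\F S$ is alternative; here the alternating nature of the associator does all the work, and one should note that conditions~(1) and~(2) are independent of the coefficient ring, so alternativity over the single field $\F$ with $\operatorname{char}\F\neq2$ already yields the RA property for all commutative associative coefficient rings.
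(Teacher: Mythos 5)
Your proposal is correct, but it takes a genuinely different route from the paper's. Both arguments pivot on the same identity $\ell_u\ell_v+\ell_v\ell_u=\ell_{u\diamond v}+\ell_{v\diamond u}$ (the paper's~\eqref{jordan}, equivalently the loop-ring identity~\eqref{abc}), but you read it as the linearized left-alternative law in $\F S$, whereas the paper treats it combinatorially. In the direction ``RA $\Rightarrow$ thin JS'' the paper verifies~\eqref{abc} by a case analysis on whether $a,b,c$ associate, invoking the two Chein--Goodaire conditions of Theorem~\ref{RA}; you obtain it in one stroke by linearizing $x(xy)=x^2y$ in the alternative algebra $\F S$, which is cleaner and does not use Theorem~\ref{RA} at all in this direction. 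In the converse direction the paper checks conditions (1) and (2) of Theorem~\ref{RA} directly: (1) from diassociativity of Moufang loops (an associating triple generates a group), and (2) from~\eqref{c-assoc} applied to a non-associating triple and to its inverses via Proposition~\ref{inverse}. You instead upgrade~\eqref{jordan} to full alternativity of $\F S$ --- left-alternativity by bilinear extension, right-alternativity by conjugating with the inversion $\tau$ through $r_a=\tau\ell_{a^t}\tau$, which is the same trick the paper uses inside the proof of Proposition~\ref{moufang} --- and then extract (1) and (2) from Artin's alternating-associator property. What your approach buys: it bypasses Moufang's diassociativity theorem for condition (1), and it makes explicit the (true, and also due to Chein--Goodaire) point that alternativity of the loop algebra over a single field of characteristic $\neq 2$ already yields the RA property over every coefficient ring, since the conditions of Theorem~\ref{RA} are ring-free. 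Two small steps should be written out in a final version: the asserted equivalence between ``thin JS'' and the displayed operator identity needs the one-line evaluation-at-the-identity argument (permutations of the form $\ell_a$ are determined by the image of $1$, which is how the paper derives~\eqref{jordan}); and the passage from the associator identities $(u,v,w)=-(u,w,v)$ and $(u,v,w)=-(v,u,w)$ to the set equalities of condition (2) requires matching basis elements in $\F S$ and ruling out coincidences of terms using $\operatorname{char}\F\neq 2$ together with the assumed non-association. Both are routine, but they are exactly the points where your linear-algebraic argument touches the combinatorics, so they deserve explicit mention.
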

	\begin{proof}
		($\Leftarrow$) Let $(S,\diamond)$ be an RA-loop. Then $(S,\diamond)$ is a Moufang loop \cite{CheGoo86}, and therefore every element $a$ has an inverse $a^{-1}$ satisfying $\forall_{a,b\in S}\  a\diamond (a^{-1}\diamond b) = b$. This implies that $\ell_{a^{-1}}= \ell_a^{-1} = \ell_a^{\,t}$. Together with $\ell_{1_\Omega}=1_S$, we conclude that the partition $L:=\{\ell_a\}_{a\in S}$ of $S^2$ is a  rainbow. If $\diamond$ is associative, then $(L,S)$ is a thin \AS, and therefore it is a Jordan scheme as well. 
        
        If $\diamond$ is nonassociative, then $(L,S)$ is a Jordan scheme if and only if  
        \[\forall_{a,b\in S}:\,\und{\ell_a}\cdot\und{\ell_b}+\und{\ell_b}\cdot\und{\ell_a} = \und{\ell_{a \diamond b}} + \und{\ell_{b\diamond a}}.
        \] 
        Observe that this is equivalent to the following relation in the loop ring $RS$: \begin{equation}\label{abc}
			a\diamond (b\diamond c) + b\diamond (a\diamond c) = (a\diamond b)\diamond c + (b\diamond a)\diamond c.
		\end{equation} 
		If $a,b,c$ associate in lexicographic order, then by Theorem~\ref{RA} they  associate in any order and~\eqref{abc} follows.  
		If $a,b,c$ do not associate, then by Theorem~\ref{RA} 
		$
		a\diamond (b\diamond c) = (b\diamond a)\diamond c, b\diamond (a\diamond c) = (a\diamond b)\diamond c
		$
		and~\eqref{abc} holds once more.
		
		($\Rightarrow$) Assume now that the permutations $L=\{\ell_u\}_{u\in S}$ form a Jordan scheme. Choosing $\omega_0=1_\Omega$ we obtain a binary operation $\odot$ on $L$ via 
        \[\ell_a\odot \ell_b =\ell_c \iff \ell_c(\omega_0) = \ell_a(\ell_b(\omega_0)) \iff c=a\diamond b.
        \]
        Thus it always holds that  $\ell_a\odot \ell_b = \ell_{a\diamond b}$. Therefore 
        the mapping $a\mapsto \ell_a$ is an isomorphism between the magmas $(L,\odot)$ and $(S,\diamond)$. 
        
        By Proposition~\ref{moufang}, $(L,\odot)$ is a Moufang loop. We must check the two conditions of Theorem~\ref{RA}. If $\ell_u,\ell_v,\ell_w$ associate, then they generate a subgroup of $L$ and therefore associate in any order. (Here we have used the fact  that any subloop of a  Moufang loop that is generated by three associate elements is associative.)  Thus the first condition of Theorem~\ref{RA} is satisfied. 
		
		To check the second condition, we  consider a nonassociate triple  $\ell_u,\ell_v,\ell_w$, i.e.\ $\ell_u\odot(\ell_v\odot \ell_w)\neq (\ell_u\odot\ell_v)\odot \ell_w$. Then it follows from~\eqref{c-assoc} that $\ell_u\odot (\ell_v\odot \ell_w) = (\ell_v\odot \ell_u)\odot \ell_w$ and 
		$\ell_v\odot (\ell_u\odot \ell_w) = (\ell_u\odot \ell_v)\odot \ell_w$. 
		
		It remains to show that $(\ell_u\odot \ell_v)\odot \ell_w = \ell_u\odot (\ell_w\odot \ell_v)$. 
		Since $\ell_u,\ell_v,\ell_w$ do not associate, the elements $\ell_w^{-1},\ell_v^{-1},\ell_u^{-1}$ do not associate either. Now~\eqref{c-assoc} implies $\ell_w^{-1}\odot (\ell_v^{-1}\odot \ell_u^{-1}) = (\ell_v^{-1}\odot \ell_w^{-1})\odot \ell_u^{-1}$.
		Taking the inverse of both  sides, we obtain 
		$(\ell_u\odot \ell_v)\odot \ell_w = \ell_u\odot (\ell_w\odot \ell_v)$. Combining this with the equality of the previous paragraph, we conclude that $(L,\odot)$ satisfies the second condition of Theorem~\ref{RA}. Hence $(L,\odot)$, and therefore $(S,\diamond)$, are RA-loops.
	\end{proof}
    
    We end this subsection with an isomorphism criterion for loop-based Jordan schemes. Following \cite{GMRS16}, 
     a \emph{half-isomorphism}  between two loops $(L,\diamond)$ and $(L',\diamond')$ is a bijection $\varphi :L\rightarrow L'$ satisfying $\varphi (x\diamond y)\in\{\varphi (x)\diamond' \varphi (y),\varphi (y)\diamond' \varphi (x)\}$. Note that every loop isomorphism or anti-isomorphism is a half-isomorphism, although we regard these as {trivial}.
     
    \begin{prop}\label{iso}
        Given two RA-loops $(L,\diamond)$ and $(L',\diamond')$, their corresponding Jordan schemes $\cJ:=(L,\{\ell_a\}_{a\in L})$ and $\cJ':=(L',\{\ell'_a\}_{a\in L'})$ are isomorphic (as Jordan schemes) if and only if $L$ and $L'$ are half-isomorphic. 
    \end{prop}
    \begin{proof}
        The reverse direction is obvious. To prove the forward direction, we 
         assume that $\cJ$ and $\cJ'$ are isomorphic Jordan schemes (i.e.\ rainbows). In other words, there exists a bijection $f:L\rightarrow L'$ such that 
        $
        f\{\ell_a\}_{a\in L}f^{-1} = \{\ell'_a\}_{a\in L'}$. The latter is equivalent to the existence of a bijection $\ \varphi:L\rightarrow L'$ satisfying $f\ell_a f^{-1} = \ell'_{\varphi(a)}
        $ for all $a\in L$. This yields
        $$
        f(\ell_a\ell_b\cup\ell_b\ell_a)f^{-1}= \ell'_{\varphi(a)}\ell'_{\varphi(b)}\cup \ell'_{\varphi(b)}\ell'_{\varphi(a)}.
        $$
        Since both $\cJ$ and $\cJ'$ are Jordan schemes, the above equality is equivalent to
        $$
        f(\ell_{a\diamond b}\cup\ell_{b\diamond a})f^{-1} = \ell'_{\varphi(a)\diamond'\varphi(b)}\cup \ell'_{\varphi(b)\diamond '\varphi(a)}.
        $$
        Since the left-hand side of this equality is equal  to 
        $\ell'_{\varphi(a\diamond b)}\cup\ell'_{\varphi(b\diamond a)}$, we conclude that 
        $$\{\varphi(a\diamond b), \varphi(b\diamond a)\} =\{ \varphi(a)\diamond'\varphi(b),\varphi(b)\diamond'\varphi(a)\},$$
        as desired.
    \end{proof}
    Although there exist Moufang loops that are non-trivially half-isomorphic, we are not aware of any examples of isomorphic Jordan schemes based on nontrivial half-isomorphic Moufang loops.
    
	\subsubsection{RA-loops.}
	RA-loops were classified in~\cite[Theorem 1.1]{JesLeaMil95}. To describe them, we must first introduce a special construction. 
	
	Let $G$ be a finite group satisfying 
	$G/Z(G)\cong \Z_2^2$ and  $G'=\{e,s\}\subseteq Z(G)$  where $e$ is the identity element of $G$. Note that according to~\cite{JesLeaMil95}, the property 
	$G/Z(G)\cong \Z_2^2$ implies $G'=\{e,s\}\subseteq Z(G)$. Note also that in this case there exists a $2$-subgroup $P$ of $G$ and an abelian subgroup $A$ of $G$ such that $G=P\times A$ and $P/Z(P)\cong\Z_2^2$.
	
	Define an involution $g\mapsto g^*$ ($g\in G$) by 
	\[
	g^* = \left\{
	\begin{array}{ll}
		g, & g\in Z(G)\\
		gs, & g\not\in Z(G).
	\end{array} 	\right. 
	\] 
	We next fix an arbitrary element $g_0\in Z(G)$ and define a binary operation $\diamond$ on the set $G\times\{0,1\}$ as follows:
	\begin{equation}\label{prod}
		(g,x)\diamond (h,y) =\left\{
		\begin{array}{ll}
			(gh,0), & x=y=0\\
			(hg,1), & x=0,y=1\\
			(gh^*,1), & x=1,y=0\\
			(g_0h^*g,0), & x=y=1.
		\end{array}
		\right.
	\end{equation}
 As in \cite{JesLeaMil95}, we denote this algebraic structure by $L(G,*,g_0)$.

	\begin{theorem}[{\cite[Theorem 1.1]{JesLeaMil95}}]%\label{LM}
		The algebraic structure $(L(G,*,g_0),\diamond)$ is an RA-loop. Moreover,  any RA-loop is isomorphic to $L(G,*,g_0)$ for suitable $G$ and $g_0\in Z(G)$ as described above. 
	\end{theorem}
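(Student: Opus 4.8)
The plan is to prove the two implications separately, using the Chein--Goodaire criterion recorded in Theorem~\ref{RA} as the main tool.

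\textbf{The construction produces an RA-loop.} Set $L:=L(G,*,g_0)$ and write $G_0=\{(g,0)\mid g\in G\}$, a subgroup of index two isomorphic to $G$, and $u:=(e,1)$. From~\eqref{prod} one reads off $u\diamond u=(g_0,0)$, so that $u^2$ is central, and $(u\diamond(g,0))\diamond u^{-1}=(g^*,0)$, so that conjugation by $u$ realizes the involution $*$ on $G_0$. I would then verify the two conditions of Theorem~\ref{RA} by a finite case analysis on the second coordinates $x,y,z\in\{0,1\}$ of a triple. In each case the two bracketings of the product differ at most by the central factor $s$ (because $G$ is associative and $g^*=gs$ for $g\notin Z(G)$), which immediately gives condition~(1); and whenever the triple fails to associate, the same computation, simplified through $h^*=hs$ and the centrality of $s$ and $g_0$, yields the symmetric identities $(u\diamond v)\diamond w=u\diamond(w\diamond v)=v\diamond(u\diamond w)$ of condition~(2). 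Hence $L$ is an RA-loop.

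\textbf{Every RA-loop is of this form.} Conversely, let $(S,\diamond)$ be a non-associative RA-loop. It is Moufang \cite{CheGoo86}, and the first task is to extract its structural skeleton from Theorem~\ref{RA}: condition~(2) forces every non-trivial associator to equal one fixed central element $s$ of order two, so that the associator subloop is exactly $\{e,s\}$, the nucleus coincides with the centre $Z:=Z(S)$, and $S/Z\cong\Z_2^3$. Using diassociativity of Moufang loops together with the order-two associator subloop, I would then locate an associative subloop $G\le S$ of index two which is a non-abelian group with $Z(G)=Z$ and $G/Z(G)\cong\Z_2^2$, and pick $u\in S\setminus G$. One sets $g_0:=u^2\in Z$ and defines $g^*$ by $g^*=g$ on $Z$ and $g^*=gs$ off $Z$; the key structural point is that this canonical involution coincides with conjugation by $u$, i.e.\ $(u\diamond g)\diamond u^{-1}=g^*$ for all $g\in G$.

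\textbf{Matching the multiplication.} With $G$, $*$ and $g_0$ fixed, I would define the bijection $S\to G\times\{0,1\}$ by $g\mapsto(g,0)$ for $g\in G$ and $g\diamond u\mapsto(g,1)$, and check that $\diamond$ transports to the four rules of~\eqref{prod}. The case $x=y=0$ is the group law of $G$; the two mixed cases follow from the twisted commutation $u\diamond g=g^*\diamond u$; and the case $x=y=1$ rewrites $(g\diamond u)\diamond(h\diamond u)$, via the Moufang identities together with $u\diamond u=g_0$ and $u\diamond h=h^*\diamond u$, into $(g_0\diamond h^*)\diamond g$. This gives $S\cong L(G,*,g_0)$ and completes the classification.

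\textbf{Main obstacle.} The genuine difficulty is entirely in the converse: proving that a non-associative RA-loop has associator subloop of order exactly two and central, that its nucleus equals its centre with $S/Z\cong\Z_2^3$, and --- most delicately --- that one can choose the index-two subgroup $G$ so that conjugation by a complementary element $u$ induces precisely the canonical involution $*$. This is where the full Moufang/RA machinery of \cite{CheGoo86,JesLeaMil95} is needed; once it is in place, the isomorphism with $L(G,*,g_0)$ is a routine verification against~\eqref{prod}.
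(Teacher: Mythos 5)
You should first note what you are up against: the paper does not prove Theorem~\ref{LM} at all --- it is imported verbatim from \cite[Theorem 1.1]{JesLeaMil95}, so there is no internal proof to compare with, and the honest benchmark is the original argument of Jespers--Leal--Mili\'es built on Chein--Goodaire \cite{CheGoo86}. Your outline follows exactly that route: verify the two conditions of Theorem~\ref{RA} for $L(G,*,g_0)$ by a case analysis on second coordinates, and for the converse extract the structural skeleton (associator subloop $\{e,s\}$ central, nucleus $=$ center, $S/Z\cong\Z_2^3$), pick an index-two group $G$ and $u\notin G$, set $g_0=u^2$, and match against \eqref{prod}. The forward direction is essentially sound: your computations $u\diamond u=(g_0,0)$ and $(u\diamond (g,0))\diamond u^{-1}=(g^*,0)$ check out against \eqref{prod}, and the remaining verification is a finite (if tedious) associator computation. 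One caveat there: ``the two bracketings differ at most by the central factor $s$'' does not by itself yield condition~(1) of Theorem~\ref{RA}; condition~(1) requires that the associator of a triple is the \emph{same} element in every ordering and bracketing, which the case analysis does establish but which you must actually exhibit rather than assert.

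The genuine gap is the converse, which is where the entire mathematical content of the classification lives. Every structural claim you need --- that condition~(2) forces all nontrivial associators to equal one fixed element $s$, that $s$ has order two and is central, that the nucleus coincides with $Z(S)$, that $S/Z\cong\Z_2^3$, and that an associative non-abelian index-two subloop $G$ with $Z(G)=Z$ exists --- is asserted (``I would then locate\dots'', ``the key structural point is\dots'') and then explicitly deferred to ``the full Moufang/RA machinery of \cite{CheGoo86,JesLeaMil95}''. As a blind proof this is circular at its core: you are citing the theorem's own proof for its hardest steps. You also misplace the one delicacy you do flag: once the preliminary structure facts are in hand (commutators and associators lie in $\{e,s\}\subseteq Z(S)$, and $C_S(u)\cap G=Z$ for any $u\notin G$), the twisted commutation $u\diamond g=g^*\diamond u$ holds automatically for \emph{every} choice of $G$ and complementary $u$ --- no careful choice is needed, as one sees directly in the model $L(G,*,g_0)$, where $(g,0)\diamond(h,1)=(hg,1)$ while $(h,1)\diamond(g,0)=(hg^*,1)$. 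The real difficulty is precisely the preliminary structure theory (a single $s$ independent of the non-associating triple, $s^2=e$, $s\in Z(S)$, nucleus $=$ center), and without deriving it from Theorem~\ref{RA} your bijection-matching step has nothing to stand on.
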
 
	The following result lists properties of $L(G,*,g_0)$ that will be critical to our later arguments. 
	\begin{prop}\label{loop-prop} Let $L=L(G,*,g_0)$ be as above. Then 
		\begin{enumerate}[(a)]
			\item The map $x\mapsto x^*$ is an anti-automorphism of $G$;
			\item $Z(L)=Z(G)\times \{0\}$;
			\item $Z(L)$ coincides with the right and left nucleus of $L$.
		\end{enumerate}   
	\end{prop}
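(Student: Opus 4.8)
The plan is to prove the three parts in order, since (b) and (c) both rely on the anti-automorphism property established in (a) together with a careful reading of the four-case multiplication rule~\eqref{prod}.

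For part (a), I would first record that $*$ is an involution. Writing $\delta(g)=0$ when $g\in Z(G)$ and $\delta(g)=1$ otherwise, one has $g^*=g\,s^{\delta(g)}$ with $s\in Z(G)$ and $s^2=e$ (as $G'=\{e,s\}$), so $(g^*)^*=g$; in particular $*$ is a bijection that fixes $Z(G)$ setwise and permutes $G\setminus Z(G)$. The real content is the identity $(gh)^*=h^*g^*$. Since $s$ is central, $h^*g^*=hg\,s^{\delta(g)+\delta(h)}$ while $(gh)^*=gh\,s^{\delta(gh)}$, so the claim is equivalent to $hg=gh\,s^{\delta(g)+\delta(h)+\delta(gh)}$. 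Now $gh$ and $hg$ lie in the same coset of $Z(G)$ and differ by a commutator, hence $hg=gh\,t$ with $t\in G'=\{e,s\}$, where $t=s$ exactly when $g,h$ fail to commute. It then remains to verify, over the four cases for the cosets $\bar g,\bar h$ in $G/Z(G)\cong\Z_2^2$, that $t=e$ iff $\delta(g)+\delta(h)+\delta(gh)$ is even. The only delicate case is $\bar g\neq\bar h$ with both nonzero: there $\delta(g)+\delta(h)+\delta(gh)$ is odd, so one needs $g,h$ to \emph{not} commute, which I would deduce from the fact that $\bar g,\bar h$ then generate $G/Z(G)$, so their commuting would force $G$ abelian, contradicting $G'\neq\{e\}$. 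The remaining three cases ($g$ or $h$ central, or $\bar g=\bar h$) are immediate using $a^2\in Z(G)$ for all $a$ (as $G/Z(G)$ has exponent $2$).

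For part (b), the inclusion $Z(G)\times\{0\}\subseteq Z(L)$ is checked by direct computation from~\eqref{prod}: for $z\in Z(G)$ one has $z^*=z$, and substituting $(z,0)$ into each of the four rules shows that $(z,0)$ commutes with every $(h,y)$ and associates in every position (the four left-nucleus subcases $y,w\in\{0,1\}$ all collapse to moving the central element $z$ past the internal group products, and the middle and right positions are analogous, using also that $g_0$ is central). For the reverse inclusion I would use the commuting requirement alone, since a central element in particular commutes with everything: if $(a,1)$ were central, then comparing $(a,1)\diamond(h,0)=(ah^*,1)$ with $(h,0)\diamond(a,1)=(ah,1)$ forces $h^*=h$ for all $h$, which is false, so the second coordinate of any central element must be $0$; and then $(a,0)\diamond(h,0)=(ah,0)$ versus $(h,0)\diamond(a,0)=(ha,0)$ forces $a\in Z(G)$.

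For part (c), I would show directly that the left nucleus, the right nucleus, and $Z(L)$ all equal $Z(G)\times\{0\}$. Since $Z(L)\subseteq N_\lambda(L)\cap N_\rho(L)$ always holds, part (b) reduces the task to proving $N_\lambda(L),N_\rho(L)\subseteq Z(G)\times\{0\}$. For $(a,x)$ in the left nucleus, the case $x=1$ is excluded by testing $((a,1)\diamond(h,0))\diamond(k,0)=(a,1)\diamond((h,0)\diamond(k,0))$, which after applying $(hk)^*=k^*h^*$ (part (a)) and left cancellation reduces to $h^*k^*=k^*h^*$ for all $h,k$, contradicting non-abelianness; and the case $x=0$ forces $a\in Z(G)$ by examining the associator of $(a,0),(h,0),(k,1)$. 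The right nucleus is handled by the mirror computations: testing $(a,1)$ against $(h,0)\diamond(k,0)$ on the right collapses to $hk=kh$ for all $h,k$, while testing $(a,0)$ against $(h,1)\diamond(k,0)$ yields $k^*a^*=a^*k^*$ for all $k$, i.e.\ $a^*\in Z(G)$ and hence $a\in Z(G)$. I expect part (a) to be the main obstacle: everything in (b) and (c) is bookkeeping of~\eqref{prod}, whereas (a) requires extracting from $G/Z(G)\cong\Z_2^2$ and $G'=\{e,s\}$ the precise commuting pattern of cosets — in effect the nondegeneracy of the induced alternating commutator form — which is exactly what makes the parity identity $t=s^{\delta(g)+\delta(h)+\delta(gh)}$ hold in the one nontrivial case.
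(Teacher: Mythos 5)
Your proof is correct. For parts (a) and (b) you essentially retrace the paper's argument: your (a) is the same reduction of $(gh)^*=h^*g^*$ to the parity identity $[g,h]=s^{\delta(g)+\delta(h)+\delta(gh)}$, settled by the commuting pattern of cosets in $G/Z(G)\cong\Z_2^2$ --- the paper states this pattern as ``$g,h$ do not commute iff $gZ(G)$, $hZ(G)$, $ghZ(G)$ are pairwise distinct'' and treats the commuting case by restricting $\alpha$ to the abelian subgroup $\langle g,h,Z(G)\rangle$, while your explicit four-coset case analysis, including the generation argument showing that distinct nonzero cosets force non-commuting (else $G$ would be abelian), is the same content spelled out more fully. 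In (b) your reverse inclusion is in fact tighter than the paper's: the paper only notes that $(g,0)$ and $(g,1)$ fail to commute when $g\notin Z(G)$, leaving the elements $(z,1)$ with $z\in Z(G)$ to be excluded only implicitly by the later nucleus computation, whereas your comparison of $(a,1)\diamond(h,0)=(ah^*,1)$ with $(h,0)\diamond(a,1)=(ah,1)$ rules out all of $G\times\{1\}$ at once. Part (c) is where you take a genuinely different route: the paper invokes Theorem~\ref{LM} ($L$ is an RA-loop, hence Moufang) to obtain $N_\lambda(L)=N_\rho(L)$ for free, and then determines $N_\rho(L)$ alone via an exhaustive four-case computation of both sides of the associator law with third argument $(f,0)$ or $(f,1)$; you instead compute both nuclei directly from~\eqref{prod} with a handful of witness triples, never touching the Moufang property. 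Your witness reductions check out: the left-nucleus tests collapse to $h^*k^*=k^*h^*$ (for $(a,1)$) and $kah=kha$ (for $(a,0)$), and the right-nucleus tests to $ahk=akh$ and $hk^*a^*=ha^*k^*$, each forcing exactly what you claim since $G$ is non-abelian and $*$ is a bijection fixing $Z(G)$. Your route buys self-containedness --- part (c) no longer depends on the Chein--Goodaire/RA-loop machinery --- at the cost of checking two nuclei instead of one; the paper's route buys brevity given that Theorem~\ref{LM} is already available. One small bookkeeping remark: your step ``$Z(L)\subseteq N_\lambda(L)\cap N_\rho(L)$ always holds'' uses the loop-theoretic convention that the center is the commutant intersected with the nucleus; this is harmless here because your part (b) already verified directly that $(z,0)$, $z\in Z(G)$, associates in every position, so the chain of inclusions closes under either convention.
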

	\begin{proof}
		{\sl Proof of \ref{conj_a}:} Define $g^*=g\alpha(g)$ where $\alpha(g)=e$ if $g\in Z(G)$ and  $\alpha(g)=s$ otherwise. We must show $(gh)^*=h^*g^*$, which is equivalent to showing $[g,h]=\alpha(g)\alpha(h)\alpha(gh)^{-1}$.
		
		It follows from the properties of a group that $g,h\in G$ do not commute if and only if the cosets $gZ(G),$ $hZ(G),$ $ghZ(G)$ are pairwise distinct. In this case $[g,h]=s=\alpha(g)=\alpha(h)=\alpha(gh)$, and the required equality holds.
		
		On the other hand, if $g,h$ commute then they belong to an abelian subgroup $N:=\langle g,h,Z(G)\rangle$ that is normal in $G$. As the restriction of $\alpha$ to $N$ is a homomorphism, we have  $\alpha(gh)^{-1}\alpha(g)\alpha(h)=e=[g,h]$, thereby completing the proof of part (a).
        \smallskip
		
		{\sl Proof of \ref{conj_b}:} For any $g\in Z(G)$ and $h\in G$, it follows from  the product formula~\eqref{prod} that $gh=hg$, $g^*=g$. Therefore,
		\[(g,0)\diamond (h,y) = 
        \left\{
		\begin{array}{ll}
			gh, & y=0\\
			hg, & y=1
		\end{array}
		\right. 
        =\left\{
		\begin{array}{ll}
			hg, & y=0\\
			hg^*, & y=1\\
		\end{array}
		\right.
		=
        (h,y)\diamond (g,0),
		\]  
        implying $(g,0)\in Z(L)$. 
        
		From this it follows that $(g,0)$, $g\in Z(G)$, commutes with every element of $L$. If $g\not\in Z(G)$, then $(g,0)$ and $(g,1)$ do not commute. Hence, neither $(g,0)$ nor $(g,1)$ lies in $Z(L)$. 
        \smallskip
		
		{\sl Proof of \ref{conj_c}:} Given that $L$ is an RA-loop, it is also a Moufang loop. Thus its left nucleus $N_\lambda(L)$ coincides with its right nucleus $N_\rho(L)$. We claim $N_\rho(L)=Z(L)$.
        
        To this end, let $f,g,h\in G$, $x,y\in\{0,1\}$ and define 
        \begin{align*}
        \mathsf{L}&:=((g,x)\diamond (h,y))\diamond (f,0),\\
        \mathsf{R}&:=(g,x)\diamond ((h,y)\diamond (f,0)).
        \end{align*}
        Using \eqref{prod}, we compute  
		\[
		\mathsf{L} = 
		\left\{
		\begin{array}{ll}
			(gh,0)\diamond (f,0), & x=y=0\\
			(hg,1)\diamond (f,0), & x=0,y=1\\
			(gh^*,1)\diamond (f,0), & x=1,y=0\\
		(g_0h^*g,0)\diamond (f,0), & x=y=1
		\end{array}
		\right. 
		= 
		\left\{
		\begin{array}{ll}
			(ghf,0), & x=y=0\\
			(hgf^*,1), & x=0,y=1\\
			(gh^*f^*,1), & x=1,y=0\\
			(g_0h^*gf,0), & x=y=1.
		\end{array}\right.
		\]
		\[
		\mathsf{R}=
		(g,x)\diamond 
        \left\{
        \begin{array}{ll}
             (hf,0),& y=0  \\
             (hf^*,1),& y=1
        \end{array} =
        \right.
		\left\{
		\begin{array}{ll}
			(ghf,0), & x=y=0\\
			(hf^*g,1), & x=0,y=1\\
			(g(hf)^*,1), & x=1,y=0\\
			(g_0(hf^*)^*g,0), & x=y=1.
		\end{array}
		\right. 
		\]
		If $f\in Z(G)$ then $f^*=f$, which together with $(hf)^*=f^*h^*$ implies 
		$\mathsf{L}=\mathsf{R}$. Therefore, $Z(G)\times\{0\}\subseteq N_\rho(L)$.
		
		If $f\not\in Z(G)$ then $f^*=fs\not\in Z(G)$, which implies $hgf^*\neq hf^*g$ for any $g\in G$ that does not commute with $f^*$. Therefore $\mathsf{L}\neq\mathsf{R}$ for every $g$ with $[g,f]\neq e$. Hence $(f,0)\not\in N_\rho(L)$ for any $f\not\in Z(G)$.
		
		It remains to show that $(f,1)\not\in N_\rho(L)$ for any $f\in G$. For arbitrary $g,h\in G$, we compute 
		$\mathsf{L}:=((g,1)\diamond (h,1))\diamond (f,1), \mathsf{R}:=(g,1)\diamond ((h,1)\diamond (f,1))$.
		Applying~\eqref{prod}, we now obtain 
		\[
        \mathsf{L} = (g_0h^*g,0)\diamond (f,1) = (fg_0h^*g,1),\; \mathsf{R}=(g,1)\diamond (g_0f^*h,0) = (g(g_0f^*h)^*,1)=(gh^*fg_0^*,1).
        \]
		It follows from $g_0\in Z(G)$ that $g_0^*=g_0$. Thus $\mathsf{L}=\mathsf{R}$ holds iff $fh^*g=gh^*f$. If $f\not\in Z(G)$, then choosing $h=e$ and $g\not\in C_G(f)$ we obtain $\mathsf{L}\neq\mathsf{R}$. (Here $C_G(f)$ denotes the centralizer in $G$ of $f$.) If $f\in Z(G)$, then we may take $g,h\in G$ with $[h^*,g]\neq e$ to ensure $\mathsf{L}\neq\mathsf{R}$. We thereby conclude that $(f,1)\not\in N_\rho(L)$ for any $f\in G$.		
	\end{proof}

	\section{Coherent closure of thin Jordan schemes}
	
	The goal of this section is to show that the \JSs we have found in the previous section are \gen. For this purpose we compute the WL-closure $\mathfrak{C}$ of the Jordan scheme $\mathfrak{J}=(S,\{\ell_s\}_{s\in S})$ obtained from an RA-loop $(S,\diamond)$. Following, we compute the group $\jaut(\mathfrak{C})$ and use it to check that no algebraic fusion of $\mathfrak{C}$ coincides with $\mathfrak{J}$.

	\begin{prop}%\label{thin} 
        Let $\Omega$ be a finite set, and let $G\leq \sym{\Omega}$ be a transitive subgroup. Then the WL-closure of $G$ is a \CC consisting of the $2$-orbits of the group $H:=C_{\sym{\Omega}}(G)$.
	\end{prop}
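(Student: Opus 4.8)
The plan is to sandwich $\WL(G)$ between two explicitly described algebras and force equality. Identify every $g\in G$ with its permutation matrix $\und g$ and set $A(G):=\langle \und g\mid g\in G\rangle$; since $\und{g}\,\und{h}=\und{gh}$ and $\und g^t=\und{g^{-1}}$, this space is closed under $\cdot$ and ${}^t$, it contains $I_\Omega=\und{1_\Omega}$, and, because $G$ is transitive, $\sum_{g\in G}\und g$ is a positive constant multiple of $J_\Omega$, so it contains $J_\Omega$ as well. By definition $\WL(G)$ is the smallest coherent algebra containing $A(G)$. On the other side, let $\mathcal V(H)$ denote the centralizer algebra of $H=C_{\sym{\Omega}}(G)$, i.e.\ the set of matrices commuting with every $\und h$, $h\in H$; it is a coherent algebra whose standard basis consists of the adjacency matrices of the $2$-orbits of $H$ (a matrix lies in $\mathcal V(H)$ iff it is constant on each $2$-orbit of $H$). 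As every $h\in H$ commutes with every $g\in G$, we have $\und g\in\mathcal V(H)$ for all $g$, hence $A(G)\subseteq\mathcal V(H)$ and therefore $\WL(G)\subseteq \mathcal V(H)$. This is the easy inclusion.

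For the reverse inclusion I would show that already the Schur--Hadamard closure of $A(G)$ equals $\mathcal V(H)$. Let $\mathcal B$ be the smallest $\circ$-closed subspace containing $A(G)$. Its standard basis is the set of indicator matrices of the blocks of the finest partition of $\Omega^2$ on which every $\und g$ is constant; concretely, $(\alpha,\beta)$ and $(\gamma,\delta)$ lie in one block exactly when the two subsets
\[
G_{\alpha\to\beta}:=\{g\in G\mid g(\alpha)=\beta\},\qquad G_{\gamma\to\delta}:=\{g\in G\mid g(\gamma)=\delta\}
\]
of $G$ coincide. Each $G_{\alpha\to\beta}$ is a left coset $g_0\,G_\alpha$ of the point stabilizer $G_\alpha$ (where $g_0(\alpha)=\beta$), and $\mathcal B\subseteq\WL(G)$ since $\WL(G)$ is $\circ$-closed and contains $A(G)$.

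The heart of the argument is to identify the blocks of $\mathcal B$ with the $2$-orbits of $H$. In one direction, if $(\gamma,\delta)=(h\alpha,h\beta)$ with $h\in H$, then, using $h^{-1}gh=g$ for $g\in G$, one checks $G_{\gamma\to\delta}=G_{\alpha\to\beta}$, so each $H$-orbit is contained in a block. The converse is the real obstacle and rests on the auxiliary fact
\[
\{\gamma\in\Omega\mid G_\gamma=G_\alpha\}=H\alpha,
\]
which I would prove directly: given $G_\gamma=G_\alpha$, the assignment $g\alpha\mapsto g\gamma$ is well defined (by $G_\alpha=G_\gamma$), is a bijection of $\Omega$ commuting with $G$, hence an element of $H$ carrying $\alpha$ to $\gamma$. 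Granting this, equality of the cosets $G_{\alpha\to\beta}=G_{\gamma\to\delta}$ forces first $G_\alpha=G_\gamma$, so $\gamma=h\alpha$ for some $h\in H$, and then a short coset computation gives $\delta=h\beta$; thus each block is a single $2$-orbit of $H$. Consequently $\mathcal B$ and $\mathcal V(H)$ have the same standard basis, so $\mathcal B=\mathcal V(H)$, and the chain $\mathcal B\subseteq\WL(G)\subseteq\mathcal V(H)=\mathcal B$ collapses to $\WL(G)=\mathcal V(H)$, which is exactly the coherent configuration of $2$-orbits of $H$. The points to watch are the transitivity input (needed for $J_\Omega\in A(G)$ and for the well-definedness above) and the two-sided matching of blocks with $H$-orbits; the surjectivity statement $\{\gamma\mid G_\gamma=G_\alpha\}=H\alpha$ is where the main work sits.
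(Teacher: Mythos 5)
Your proposal is correct, and its skeleton is the same sandwich as the paper's: the easy inclusion $\WL(G)\subseteq \mathcal{V}(H)$ from $H$ centralizing $G$, and then the identification of the finest $\circ$-generated partition with the $2$-orbits of $H$. Indeed your blocks $\{(\gamma,\delta)\mid G_{\gamma\to\delta}=G_{\alpha\to\beta}\}$ are exactly the paper's relations $r_{\omega_1,\omega_2}=\bigcap_{g\in G_{\omega_1\to\omega_2}}g$, since all these cosets have the same cardinality $|G|/|\Omega|$, so containment of one in another forces equality; this same remark is what justifies your (stated but unproved) description of the standard basis of $\mathcal{B}$, because each block indicator is then literally the Hadamard product of the matrices $\und{g}$ over the coset $G_{\alpha\to\beta}$ --- worth a sentence in a final write-up, though it is not a gap. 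Where you genuinely diverge is in the decisive transitivity step. The paper reduces to the diagonal piece $r_{\omega_1,\omega_1}=1_{\Omega_1}$ with $\Omega_1=\mathsf{Fix}(G_{\omega_1})$, checks that $H$ acts semiregularly there, and then invokes the classical counting fact $|\Omega_1|=[N_G(G_{\omega_1}):G_{\omega_1}]=|C_{\sym{\Omega}}(G)|=|H|$ to upgrade semiregularity to transitivity. You instead prove $\{\gamma\mid G_\gamma=G_\alpha\}=H\alpha$ constructively, exhibiting the centralizing permutation $h\colon g\alpha\mapsto g\gamma$ (well defined by $G_\alpha=G_\gamma$ and transitivity), and then transporting along the coset to get $\delta=h\beta$. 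Your route buys a self-contained, citation-free argument --- essentially inlining the standard proof of the ``well-known'' fact the paper quotes --- at the cost of a few more verifications; the paper's count is shorter but opaque. You also make explicit a slightly sharper by-product that the paper leaves implicit: already the Schur--Hadamard closure of $\langle \und{g}\rangle_{g\in G}$ equals the centralizer algebra of $H$, so closure under matrix multiplication comes for free.
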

	\begin{proof}
		Let  $\mathfrak{X}=(\Omega,C)$ be the coherent closure of $G$.  Since  $H$ centralizes $G$, each $h\in H$ is an automorphism of every $g\in G$. Therefore every basic relation of $\mathfrak{X}$ is a union of $2$-orbits of $H$. We must show that each $2$-orbit of $H$ is a union of basic relations of $\mathfrak{X}$.
		
		Pick an arbitrary pair $(\omega_1,\omega_2)\in \Omega^2$ and define $G_{\omega_1\rightarrow \omega_2}$ to  be the set of all $g\in G$ that map $\omega_1$ to  $\omega_2$, that is $G_{\omega_1\rightarrow \omega_2} =  \{g\in G\mid  g(\omega_1)=\omega_2\}= \{g\in G \mid (\omega_1,\omega_2)\in g\}$. Note that $G_{\omega_1\rightarrow \omega_2}$ is nonempty and $G_{\omega_1\rightarrow \omega_2} = gG_{\omega_1}$ for any $g\in G_{\omega_1\rightarrow \omega_2}$.
		
		The relation $r_{\omega_1,\omega_2} := \bigcap_{g\in G_{\omega_1\rightarrow \omega_2}} g$ is nonempty and is the union of certain elements from $C$. We claim $r_{\omega_1,\omega_2}$ is a $2$-orbit of $H$. Fix a permutation $g\in G_{\omega_1\rightarrow \omega_2}$ and write $r_{\omega_1,\omega_2} = g r_{\omega_1,\omega_1}$. Obviously $r_{\omega_1,\omega_1}=1_{\Omega_1}$ where $\Omega_1$ is the fixed-point set $\mathsf{Fix}(G_{\omega_1})$ of $\Omega$ under $G_{\omega_1}$. Clearly $\Omega_1$ is $H$-invariant implying that $1_{\Omega_1}$ is $H$-invariant as well.

		Observe that the action of $H$ on $1_{\Omega_1}$ is semiregular.
        Otherwise, given any $\omega\in H$, if there were to exist $h,\omega',\omega''\in H$ such that $h(\omega)=\omega$ and $h(\omega')=\omega''$, then for any element $g\in G_{\omega\to\omega'}$ we would obtain the contradiction $g(\omega)=hgh^{-1}(\omega)= h(g(\omega))=\omega''$.
		
		Finally, it is well known that $|\Omega_1|=[N_G(G_{\omega_1}):G_{\omega_1}] = |C_{\sym{\Omega}}(G)|=|H|$. Hence $H$ acts transitively in $1_{\Omega_1}$, and therefore it also acts transitively in $g1_{\Omega_1}=r_{\omega_1,\omega_2}$. We conclude that  $r_{\omega_1,\omega_2}$ is a $2$-orbit of $H$.
	\end{proof}
	
	\begin{prop}\label{WL-loop}
		Let	$(S,\diamond)$ be a loop, and let $G\le Sym(S)$ be the subgroup generated by $\{\ell_s\}_{s\in S}$. Then $C_{\sym{S}}(G)$ %contains a subgroup 
		coincides with the subgroup   $\widehat{N}_\rho:=\{r_s\}_{s\in N_\rho}$ of $\sym{S}$, where $N_\rho$ is the right nucleus of $(S,\diamond)$.
	\end{prop}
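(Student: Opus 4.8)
The plan is to pin down the centralizer in two moves: first show that any centralizing permutation must be a right translation $r_c$, and then decide exactly which right translations centralize $G$. Since $G=\langle \ell_s : s\in S\rangle$, a permutation centralizes $G$ if and only if it commutes with every generator $\ell_s$. So I would fix $\phi\in\sym{S}$ with $\phi\ell_s=\ell_s\phi$ for all $s\in S$ and evaluate this relation at the neutral element $e$ of the loop. Using $\ell_s(e)=s\diamond e=s$, this gives $\phi(s)=\phi(\ell_s(e))=\ell_s(\phi(e))=s\diamond\phi(e)$ for every $s$. Writing $c:=\phi(e)$, we obtain $\phi=r_c$. Thus every element of $C_{\sym{S}}(G)$ is a right translation (a genuine permutation, since right translations in a loop are bijective), and in particular $\phi$ is completely determined by its value at $e$.

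It remains to characterize the $c\in S$ for which $r_c$ centralizes $G$, i.e.\ commutes with every $\ell_s$. Unwinding $r_c\ell_s=\ell_s r_c$ at an arbitrary point $x\in S$ yields
\[
(s\diamond x)\diamond c = s\diamond(x\diamond c)\qquad\text{for all } s,x\in S,
\]
which is exactly the defining condition for $c$ to lie in the right nucleus $N_\rho$ of $(S,\diamond)$. Conversely, if $c\in N_\rho$ then this same identity shows $r_c\ell_s=\ell_s r_c$ for all $s$, so $r_c\in C_{\sym{S}}(G)$. Combining the two steps gives $C_{\sym{S}}(G)=\{r_c : c\in N_\rho\}=\widehat{N}_\rho$; that this set is a subgroup needs no separate verification, being a centralizer.

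The argument is short, and the only real subtlety is keeping the conventions aligned: because $G$ is the \emph{left}-multiplication group, it is the \emph{right} translations and the \emph{right} nucleus that appear, and swapping left for right would produce the wrong statement. Two small points deserve to be made explicit rather than treated as obstacles --- that commuting with each generator $\ell_s$ is equivalent to centralizing all of $G$, and that the evaluation trick relies on $e$ being a two-sided neutral element so that $\ell_s(e)=s$. As a sanity check, in the associative case $N_\rho=S$ and the formula recovers the classical fact that the centralizer of the left regular representation of a group is its right regular representation. Beyond this bookkeeping there is no hard step.
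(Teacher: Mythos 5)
Your proof is correct and follows essentially the same route as the paper: evaluating the commutation relation $\phi\ell_s=\ell_s\phi$ at the neutral element to conclude $\phi=r_{\phi(e)}$, then observing that the remaining identity $(s\diamond x)\diamond c = s\diamond(x\diamond c)$ is precisely the defining condition of the right nucleus, with the same easy converse. Your remark that the subgroup property comes for free once the set is identified with a centralizer is a slight streamlining of the paper, which instead verifies directly that $\widehat{N}_\rho$ is closed under composition.
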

	\begin{proof}
		Recall that $N_\rho:=\{z\in S \mid \forall_{x,y\in S}\ x\diamond (y\diamond z) = (x\diamond y)\diamond z\}$. It is easy to see that $N_\rho$ contains $1_\Omega$ and is $\diamond$-closed. Together with $r_{y\diamond z} = r_y r_z$, we obtain that $\widehat{N}_\rho$ is a subgroup of $\sym{S}$. Note that $\widehat{N}_\rho$ acts semiregularly on $S$.
		
		A permutation $\phi\in \sym{S}$ centralizes $G$ if and only if it commutes with $\ell_s$ for every $s\in S$, that is, if $\phi(s\diamond u)=s\diamond \phi(u)$ for all $s,u\in S$. Substituting $u=1_\Omega$ into this equality, we obtain $\phi = r_{\phi(1_\Omega)}$, i.e.\ $\phi(x)=x\diamond \phi(1_\Omega)$. Thus the equality $\forall_{s,u\in S}\  \phi(s\diamond u)=s\diamond \phi(u)$ becomes 
		$\forall_{s,u\in S}\ (s\diamond u)\diamond \phi(1_\Omega) = 
		s\diamond (u\diamond \phi(1_\Omega))$ whence $\phi(1_\Omega)\in N_\rho$. To summarize, $\phi\in C_{\sym{S}}(G)$ implies $\phi=r_{\phi(1_\Omega)}$ and  $\phi(1_\Omega)\in N_\rho$. 

        On the other hand, if $s\in N_\rho$ then 
        \[
        \forall_{u,v\in S}\ r_s(\ell_u(v)) =(u\diamond v)\diamond s = u\diamond(v\diamond s) = \ell_u(r_s(v)),
        \]
        in other words $r_s\ell_u = \ell_u r_s$. 
	\end{proof}

	\subsection{Jordan algebraic automorphisms of coherent configurations}
	
	Let $\mathfrak{C}=(\Omega,C)$ be a coherent configuration, and let $\cA$ be its adjacency algebra which we consider as a Jordan algebra. Denote by $\Omega_1,\dots,\Omega_k$ the fibers of $\mathfrak{C}$. Then the matrices $\und{1_{\Omega_i}},i=1,\dots,k$ are the only minimal idempotents with respect to $\star$ and $\circ$. Therefore, every  algebraic automorphism $\phi$ of $\mathfrak{C}$ permutes the relations $1_{\Omega_i},i=1,\dots,k$, which we may regard as a permutation of the indices $1,\dots,k$. Following this, we abuse notation by writing
	$\phi(1_{\Omega_i}) = 1_{\Omega_{\phi(i)}}$.

    In what follows, if $r$ is a relation on  $\Omega$ and $\Omega_1\subseteq\Omega$, then we write $r\restr_{\Omega_1}:= r\cap(\Omega_1\times\Omega_1)$ to indicate the restriction of $r$ to $\Omega_1$. Moreover, if $R$ is a set of relations on $\Omega$ we write $R\restr_{\Omega_1}:=\{r\restr_{\Omega_1}\mid r\in R\}$.
	\begin{prop}\label{jaut}
		Let $\mathfrak{C}=(\Omega,C)$ be the $2$-orbit configuration of a semiregular subgroup $H\leq\sym{\Omega}$. Then 
		\begin{enumerate}[(a)]
			\item $\mathfrak{C}\cong (\Omega_1,C\restr_{\Omega_1})\otimes \mathfrak{D}_k$ where $\Omega_1$ is an $H$-orbit, $k$ is the number of $H$-orbits on $\Omega$, and $\mathfrak{D}_k$ is the discrete configuration on $[k]:=\{1,\dots,k\}$.
			\item $\jaut(\mathfrak{C})=\taut(\mathfrak{C})$.
		\end{enumerate}
	\end{prop}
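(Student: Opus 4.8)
The plan is to handle the two parts in sequence, using part (a) to install explicit coordinates that make the Jordan structure constants of $\mathfrak{C}$ transparent, and then running an orientation-bookkeeping argument for part (b).

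\emph{Part (a).} First I would fix one point $\omega_i$ in each $H$-orbit $\Omega_i$ ($i\in[k]$) and use semi-regularity to identify $\Omega_i$ with $H$ via $h\mapsto h(\omega_i)$; transporting everything to $\Omega_1$ gives a bijection $f\colon\Omega\to\Omega_1\times[k]$. A $2$-orbit of $H$ through a pair in $\Omega_i\times\Omega_j$ is $\{(gh(\omega_i),gh'(\omega_j)):g\in H\}$, whose $f$-image has constant second coordinates $(i,j)$ while the first coordinates range over a $2$-orbit of $H$ on $\Omega_1$. Hence $f$ carries each basic relation of $\mathfrak{C}$ onto a product (basic relation of $\mathsf{Inv}(\Omega_1,H)$)$\times\{(i,j)\}$, i.e.\ onto a basic relation of $\mathsf{Inv}(\Omega_1,H)\otimes\mathfrak{D}_k$; since $H$ acts regularly on $\Omega_1$, the factor $\mathsf{Inv}(\Omega_1,H)$ is just the thin scheme of $H$. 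I would then record coordinates once and for all: write $R_{s,i,j}$ for the basic relation of $s\in H$ in block $(i,j)$, so that $\und{R_{s,i,j}}\cdot\und{R_{t,i',j'}}=\delta_{j,i'}\und{R_{st,i,j'}}$, $R_{s,i,j}^t=R_{s^{-1},j,i}$ and $1_{\Omega_i}=R_{e,i,i}$. Every Jordan product is then immediate; the three I would single out are $1_{\Omega_i}\star\und{R_{s,i,j}}=\half\und{R_{s,i,j}}$ (for both $i$ and $j$, and zero for other fibers), $\und{R_{s,i,j}}\star\und{R_{s^{-1},j,i}}=\half(1_{\Omega_i}+1_{\Omega_j})$, and the triangle identity $\und{R_{s,i,j}}\star\und{R_{t,j,l}}=\half\und{R_{st,i,l}}$ for distinct $i,j,l$.

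\emph{Part (b).} Since $\taut(\mathfrak{C})\le\jaut(\mathfrak{C})$ always holds, only the reverse inclusion needs proof. For $k=1$ the configuration is the thin scheme of $H$ and equality is Scott's theorem \cite{Sco57} as quoted above, so I assume $k\ge2$. Given $\phi\in\jaut(\mathfrak{C})$, it permutes the minimal idempotents $1_{\Omega_i}$ and so induces $\bar\phi\in\sym{[k]}$; the $\half$-eigenrelation for $1_{\Omega_i}$ together with $\und{R}\star\und{R^t}=\half(1_{\Omega_i}+1_{\Omega_j})$ shows that $\phi$ maps the two-sided block on $\{i,j\}$ onto the block on $\{\bar\phi(i),\bar\phi(j)\}$, and diagonal blocks to diagonal blocks. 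The first delicate point is to define, for each ordered off-diagonal block $(i,j)$, an orientation $\varepsilon_{i,j}\in\{+,-\}$ recording whether $\phi$ sends $R_{\bullet,i,j}$ into block $(\bar\phi(i),\bar\phi(j))$ or into $(\bar\phi(j),\bar\phi(i))$. That this is well defined (one orientation per block) I would prove by contradiction: if $R_{s,i,j}$ and $R_{s',i,j}$ had opposite orientations, then $\phi$ would send the nonzero product $\und{R_{s,i,j}}\star\und{R_{(s')^{-1},j,i}}=\half(1_{\Omega_i}+1_{\Omega_j})$ to a Jordan product of two relations lying in the \emph{same} off-diagonal block, which vanishes. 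Compatibility with ${}^t$ then gives $\varepsilon_{i,j}=\varepsilon_{j,i}$.

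\emph{The hard part} is to force $\varepsilon$ to be globally constant and to pin down $\phi$. Applying $\phi$ to the triangle identity, for distinct $i,j,l$ the image $\phi(\und{R_{s,i,j}})\star\phi(\und{R_{t,j,l}})$ must be nonzero; inspecting which block compositions survive forces $\varepsilon_{i,j}=\varepsilon_{j,l}$. With $\varepsilon_{i,j}=\varepsilon_{j,i}$ and connectivity of the block-adjacency graph for $k\ge2$, this makes $\varepsilon$ equal to a single value $\varepsilon_0$. When $\varepsilon_0=+$, applying $\phi$ to $1_{\Omega_i}\star\und{R_{s,i,j}}=\half\und{R_{s,i,j}}$ and to its right-hand analogue $\und{R_{s,i,j}}\star 1_{\Omega_j}=\half\und{R_{s,i,j}}$ shows directly (with no appeal to Scott in this range) that the map $\theta_i$ induced on diagonal block $i$ is a homomorphism, hence an automorphism, and that $\phi(R_{s,i,j})=R_{\theta_i(s)c_{i,j},i,j}$ for constants $c_{i,j}\in H$ determined by $\phi(R_{e,i,j})=R_{c_{i,j},i,j}$; feeding the triangle identity with $s=t=e$ yields the cocycle relation $c_{i,j}c_{j,l}=c_{i,l}$, which is exactly what is needed to check that $\phi$ preserves all ordinary products, i.e.\ $\phi\in\aaut(\mathfrak{C})$. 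When $\varepsilon_0=-$ the permutation $\tau\phi$ has orientation $+$, so $\tau\phi\in\aaut(\mathfrak{C})$ and $\phi\in\taut(\mathfrak{C})$. Either way $\phi\in\taut(\mathfrak{C})$, giving $\jaut(\mathfrak{C})=\taut(\mathfrak{C})$. I expect the orientation-consistency step to be the main obstacle: it is precisely the phenomenon of excluding ``mixed'' Jordan automorphisms that separates $\jaut$ from $\taut$ in general, and the $k\ge2$ off-diagonal structure is what rules it out here.
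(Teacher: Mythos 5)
Your overall strategy coincides with the paper's: part (a) is the same construction (orbit representatives, transport to $\Omega_1$, cardinality-plus-implication check), and in part (b) the paper likewise first normalizes the fiber permutation by composing with an element of $\aaut(\mathfrak{C})$, then proves a global switched/non-switched dichotomy, then composes with $\tau$ and pins $\phi$ down via constants satisfying a cocycle relation. The only cosmetic difference is that the paper runs the dichotomy argument inside the ideal spanned by the matrices $E_{i,j}=\frac{1}{|H|}\und{\Omega_i\times\Omega_j}$ (isomorphic to $M_k(\mathbb{C})$), using $E_{\ell,j}=2E_{i,j}\star E_{\ell,i}$, whereas you track individual basic relations with per-block orientations $\varepsilon_{i,j}$ propagated by the triangle identity; these are the same computation in different clothing.

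Two steps in your write-up are defective, one of them a genuine gap. The minor one: for $s\neq s'$ one has $\und{R_{s,i,j}}\star\und{R_{(s')^{-1},j,i}}=\half\bigl(\und{R_{s(s')^{-1},i,i}}+\und{R_{(s')^{-1}s,j,j}}\bigr)$, not $\half(1_{\Omega_i}+1_{\Omega_j})$; your contradiction only needs this product to be nonzero, which it is, so orientation well-definedness survives, but the displayed identity is false. The serious one is the endgame for $\varepsilon_0=+$: the relations $1_{\Omega_i}\star\und{R_{s,i,j}}=\half\und{R_{s,i,j}}$ and $\und{R_{s,i,j}}\star 1_{\Omega_j}=\half\und{R_{s,i,j}}$ are one and the same equation (as $\star$ is commutative), carry no multiplicative information, and are automatically preserved once block membership is known --- they cannot show that $\theta_i$ is a homomorphism. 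What is actually needed are the mixed identities $\und{R_{s,i,i}}\star\und{R_{t,i,j}}=\half\und{R_{st,i,j}}$ (which give $\phi(R_{t,i,j})=R_{\theta_i(t)c_{i,j},i,j}$ and homomorphy of $\theta_i$) together with the two-block identity $2\,\und{R_{s,i,j}}\star\und{R_{t,j,i}}=\und{R_{st,i,i}}+\und{R_{ts,j,j}}$, which yields $c_{j,i}=c_{i,j}^{-1}$ and $\theta_j(h)=c_{i,j}^{-1}\theta_i(h)c_{i,j}$. Without these compatibilities the cocycle relation $c_{i,j}c_{j,l}=c_{i,l}$ alone does not let you verify preservation of products such as $\und{R_{t,i,j}}\cdot\und{R_{s,j,j}}$ or $\und{R_{s,i,j}}\cdot\und{R_{t,j,i}}$, nor compare $\theta_i$ with $\theta_j$; note also that for $k=2$ there is no triangle at all, so the cocycle relation is not even available there and the two-block identity must carry the whole load. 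These missing identities are precisely the paper's equations \eqref{phi} and the substitutions $f=e$, $h=e$ performed there; with that repair (all ingredients you already recorded in part (a)) your argument closes, including $k=1$ via Scott's theorem, exactly as in the paper.
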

	\begin{proof}
		{\sl Proof of \ref{conj_a}:}
		Let $\Omega_1,\dots,\Omega_k$ be a complete set of $H$-orbits on $\Omega$. Clearly $|\Omega_i|=|H|$ for all $i\in [k]$. Note that $\Omega_1\times [k]$ is the point set 
		of $(\Omega_1,C\restr_{\Omega_1})\otimes \mathfrak{D}_k$. Thus, 
		to build a bijection $f:\Omega_1\times [k] \rightarrow \Omega$ we choose arbitrary  $\omega_i\in \Omega_i,i\in [k]$.  Define $f(\omega,i) = h(\omega_i)$ where $h\in H$ is the unique element satisfying $\omega = h(\omega_1)$.
		
		We now check that $f$ maps basic relations of $(\Omega_1,C\restr_{\Omega_1})\otimes \mathfrak{D}_k$ onto basic relations of $\mathfrak{C}$. Since in both configurations all basic relations have the same cardinality, namely $|H|$, it is sufficient to  prove  the following holds for any quadruple of points $(\alpha,i),(\beta,j),(\alpha',i'),(\beta',j')\in \Omega_1\times [k]$:
		\[
        r((\alpha,i),(\beta,j))=r((\alpha',i'),(\beta',j'))\Rightarrow  r(f(\alpha,i),f(\beta,j))=r(f(\alpha',i'),f(\beta',j')).
		\]
		It follows from the first equality that $i'=i$ and $j'=j$. Also  $\alpha' = h(\alpha)$ and $\beta'=h(\beta)$ where $h$ is the unique element of $H$ that maps $\alpha$ to $\alpha'$. Let $a,b\in H$ be elements satisfying $\alpha=a(\omega_1),\beta=b(\omega_1)$ from which $\alpha'=ha(\omega_1)$ and $\beta'=hb(\omega_1)$ follows. Then we obtain
		$ f(\alpha,i) =a(\omega_i)$, $f(\beta,j)=b(\omega_j)$ and 
		$ f(\alpha',i) =ha(\omega_i)$, $f(\beta',j)=hb(\omega_j)$. The latter equality implies $(f(\alpha',i),f(\beta',j))=(h(f(\alpha,i)),h(f(\beta,j)))$, whence we have  $r(f(\alpha,i),f(\beta,j))=r(f(\alpha',i'),f(\beta',j'))$ proving (a).
        \smallskip
		
		{\sl Proof of \ref{conj_b}:} We first  describe the group of isomorphisms $\mathsf{Iso}(\mathfrak{C})$ (referred to as color automorphisms in~\cite{KliMuzPecWolZie07}). Since $H$ acts semiregularly on $\Omega$, $H$ is $2$-closed~\cite{Wie69}. In this case $\mathsf{Iso}(\mathfrak{C})$ coincides with the normalizer of $H$ in the symmetric group $\sym{\Omega}$.   
		
         Under the identification 
        $\mathfrak{C}$ with $(\Omega_1,C\restr_{\Omega_1})\otimes \mathfrak{D}_k$.
        a semiregular action of $H$ on $\Omega_1\times [k]$ is given by $h(\omega,i)=(h(\omega),i)$. Denoting by $F$ the set of basic relations of $(\Omega_1,C\restr_{\Omega_1})$, we can now describe the basic relations of $\mathfrak{C}$ in the  following manner: $[f]_{i,j}=\{((\alpha,i),(\beta,j)) \mid (\alpha,\beta)\in f\}$ where $f\in F$. In what follows, we set $C:=\{[f]_{ij} \mid f\in F,i,j\in [k]\}$.
        
        Since $H$ acts regularly on $\Omega_1$, its 2-orbit scheme is thin, i.e.\ each $f\in F$ is a permutation on $\Omega_1$ and $F$ is a regular permutation subgroup of $\sym{\Omega_1}$. Note that $F$ 
		is the  centralizer of $H$ in $\sym{\Omega_1}$ and $F\cong H$. 

        We next describe certain elements of the group $\aaut(\mathfrak{C})$. 
        Given $v\in F^k$, we define a permutation $\hat{v}\in\sym{C}$ by $\hat{v}: [f]_{i,j}\mapsto [v_i^{-1}fv_j]_{i,j}$. It is easy to check that $\hat{v}$ is an algebraic automorphism of $\mathfrak{C}$. Another class of algebraic automorphisms comes from the symmetric group $S_k=\sym{[k]}$, namely every permutation $\psi\in S_k$ permutes the basic relations in the following manner: $[f]_{i,j}\mapsto [f]_{\psi(i),\psi(j)}$. We denote such induced permutations by $\hat{\psi}$. 
        
        A third class of algebraic automorphisms comes from $\aut{F}$, namely 
        $\psi\in \aut{F}$  permutes the basic relations by $[f]_{i,j}\mapsto [\psi(f)]_{i,j}$.
        In keeping with the above, we denote these induced permutations by $\hat{\psi}$. One can easily check that the three types of automorphisms just described generate a group isomorphic to $F^k\rtimes (S_k\times\aut{F})$.

		Pick an arbitrary $\phi\in \jaut(\mathfrak{C})$. Then $\forall_{i\in [k]}\ \phi(I_{\Omega_i})=I_{\Omega_{{\varphi}(i)}}$ for a suitably chosen ${\varphi}\in S_k$. Since $\hat{\varphi}\in \aaut(\mathfrak{C})$, we have that $\hat{\varphi}^{-1}\phi$ is a Jordan automorphism of $\mathfrak{C}$ that fixes each fiber $I_{\Omega_i},i\in [k]$. Thus, in what follows we may assume $\forall_{i\in [k]}\ \phi(I_{\Omega_i})=I_{\Omega_i}$. 
        
        Consider now the matrices $E_{i,j}:=\frac{1}{|H|}\und{\Omega_i\times \Omega_j}$. 
		The linear span $\cA_0:=\langle E_{i,j}\rangle_{i,j\in [k]}$ is a two-sided ideal of $\cA:=\mathbb{C}[C]$ isomorphic to $M_k(\mathbb{C})$. Since $E_{i,i}$ is the unique nonzero solution to the equations $X\star I_{\Omega_i} = X$, $X\star X = X$, $X\circ X = \frac{1}{|H|} X$ and $\phi(I_{\Omega_i}) = I_{\Omega_i}$, we obtain $\phi(E_{i,i})=E_{i,i}$ for each $i \in [k]$. Given $i\neq j\in [k]$, the equations $E_{i,i}\star X =\half X$, $E_{j,j}\star X =\half X$ and $X\circ X = \frac{1}{|H|} X$ have two nonzero solutions $E_{i,j}$ and $E_{j,i}$. Hence for any pair $i\neq j$, it is either the case that $\phi(E_{i,j})=E_{j,i},\phi(E_{j,i})=E_{i,j}$ or $\phi(E_{i,j})=E_{i,j},\phi(E_{j,i})=E_{j,i}$. Let us call a pair \emph{switched} in the former case and \emph{non-switched} in the latter case.

		 Our next step is to show that if a single pair $\{i,j\}\subseteq [k]$ is switched, then all pairs in $[k]$ are switched.   
		This is trivial if $k=2$. For $k\geq 3$, it suffices to prove that given any three distinct elements $i,j,\ell\in [k]$, if $\{i,j\}$ is switched then
        $\{i,\ell\}$ and $\{j,\ell\}$ are also switched. 
        
		By way of  contradiction, suppose $\{i, j\}$ is a switched pair and at least one of $\{i,\ell\}$ and $\{j,\ell\}$ is non-switched. First suppose $\{i,\ell\}$ is non-switched. Then  $\phi(E_{i,j})=E_{j,i},\phi(E_{j,i})=E_{i,j}$ and $\phi(E_{i,\ell})=E_{i,\ell}, \phi(E_{\ell,i})=E_{\ell,i}$.  This implies 
		\[
        E_{\ell,j} = 2 E_{i,j}\star E_{\ell,i} = 2\phi(E_{j,i})\star \phi(E_{\ell,i}) = 2\phi(E_{j,i}\star E_{\ell,i}) = O,
        \]
		a contradiction. Thus we are in the case where $\{i,j\}$ and $\{i,\ell\}$ are switched and $\{j,\ell\}$ is non-switched.  But here, just as above, we reach the contradiction
        \[
        E_{i,j} = 2 E_{i,\ell}\star E_{\ell,j} = 2\phi(E_{\ell,i})\star \phi(E_{\ell,j}) = 2\phi(E_{\ell,i}\star E_{\ell,j}) = O.
        \] 
        We conclude that every pair in $[k]$ is switched or every such pair is non-switched.

        It follows from above that $\phi$ acts on $\cA_0\cong M_k(\mathbb{C})$ either trivially or via matrix transposition. Combining $\phi$ with $\tau$ in the latter case, we may assume $\phi$ acts trivially on $\cA_0$, i.e.\ $\phi(E_{i,j}) = E_{i,j}$ for all $i,j\in [k]$. This implies the set of relations $\{[f]_{i,j}\}_{f\in F}$ is $\phi$-invariant for any $i,j\in [k]$, i.e.\ there exists a permutation $\phi_{ij}\in\sym{F}$ such that $\phi([f]_{i,j}) = [\phi_{ij}(f)]_{i,j}$. 
		
		It follows from $2[f]_{ii}\star [h]_{ii} = [fh]_{ii}+[hf]_{ii}$ that $\phi_{ii}(fh)\in\{\phi_{ii}(f)\phi_{ii}(h),\phi_{ii}(h)\phi_{ii}(f)\}$. Thus $\phi_{ii}$ is a half-automorphism~\cite{Sco57} of $F$. According to~\cite{Sco57}, $\phi_{ii}$ is either an automorphism or an anti-automorphism. 
		
		For every pair $i\neq j$ and $f,h\in F$ we have $2[f]_{i,j}\star [h]_{j,i}=[fh]_{i,i}+[hf]_{j,j}$.  Applying $\phi$ to  both sides, we obtain 
		\begin{equation}\label{phi}
			\begin{array}{rcl}
				\phi_{ij}(f)\phi_{ji}(h) & = & \phi_{ii}(fh)\\
				\phi_{ji}(h)\phi_{ij}(f) & = & \phi_{jj}(hf)
			\end{array}
		\end{equation}
		for all  $f,h\in F$.
        Substituting $f=e$ into~\eqref{phi}
        now gives 
        \[ 
        \phi_{ij}(e)\phi_{ji}(h)=\phi_{ii}(h), \phi_{ji}(h)\phi_{ij}(e)=\phi_{jj}(h)\Rightarrow \phi_{ij}(e)^{-1}\phi_{ii}(h)\phi_{ij}(e) = \phi_{jj}(h).
        \]
		It follows from $f_{ii}(e)=e$ that $\phi_{ij}(e)=\phi_{ji}(e)^{-1}$.
         Substituting $h=e$ into~\eqref{phi} we obtain $\phi_{ij}(f)=\phi_{ii}(f)\phi_{ji}(e)^{-1}$. Together with $\phi_{ji}(h)=\phi_{ij}(e)^{-1}\phi_{ii}(h)$ this yields 
         \[
         \phi_{ij}(f)\phi_{ji}(h)=\phi_{ii}(f)\phi_{ji}(e)^{-1}\phi_{ij}(e)^{-1}\phi_{ii}(h) = \phi_{ii}(f)\phi_{ii}(h). 
         \]
         It now follows from~\eqref{phi} that $\phi_{ii}(f)\phi_{ii}(h) = \phi_{ii}(fh)$, i.e.\ $\phi_{ii}$ is an automorphism of $F$ for every $i\in [k]$.

        Since $\phi_{11}^{-1}\in \aut{F}$, it induces an algebraic automorphism $\widehat{\phi_{11} ^{-1}}\in\aaut(\mathfrak{C})$ which acts via $[f]_{ij}\mapsto [\phi_{11}^{-1}(f)]_{ij}$. Replacing $\phi$ by $\phi \widehat{\phi_{11}^{-1}}$,we may assume that $\phi_{11}$ is trivial. This yields $\phi_{jj}(h)=\phi_{j1}(e)^{-1}\phi_{11}(h)\phi_{1j}(e).$
    Setting $a_j:=\phi_{1j}(e)\in F$, we may write  $\phi_{jj}(h)=a_j^{-1}ha_j$ for each $j\in[k]$. Also $\phi_{1j}(f) = \phi_{11}(f) \phi_{j1}(e)^{-1} = f a_j$.

    To complete the proof, it suffices to show that $\phi = \hat{a}$ where $a:=(a_1,\dots,a_k)\in F^k$. The equality $\phi = \hat{a}$ is equivalent to 
	$\phi([f]_{ij})=\hat{a}([f]_{ij})=[a_i^{-1}fa_j]_{ij}$.
		
	It follows from~\eqref{phi} that $\phi_{ij}(f)=\phi_{ii}(f)\phi_{ji}(e)^{-1}=a_i^{-1} f a_i \phi_{ji}(e)^{-1}$. So it remains to show that $\phi_{ji}(e)=a_j^{-1}a_i$. If  $i=j$, this equality follows from $\phi_{ii}\in\aut{F}$.  Hence we assume $i\neq j$. But now   $2[e]_{ji}	= [e]_{j1}\star [e]_{1i}$ implies $\phi_{ji}(e)=\phi_{j1}(e)\phi_{1i}(e)=\phi_{1j}(e)^{-1}\phi_{1i}(e)=a_j^{-1}a_i$ as desired.
\end{proof}
\subsection{Thin regular Jordan schemes are \gen}
	
  Before proceeding to fulfill the task reflected in the subsection title, we make some small notational changes. Namely, elements of a loop $(S,\diamond)$ shall now be denoted by $\hat{s}$ rather than $\ell_s$, which was our previously chosen notation. Also, the set of all permutations $\{\hat{s}\}_{s\in S}$ will be denoted by $\hat{S}$. 
  
  We are now equipped to prove the main result of this subsection.
      \begin{theorem}%\label{gen}
		Let $(S,\diamond)$ be a nonassociative RA-loop. Then $\mathfrak{S}:=(S,\hat{S})$ is an \gen Jordan scheme. 
	\end{theorem}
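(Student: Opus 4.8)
The plan is to follow the roadmap of Section~4: compute the WL-closure $\mathfrak{C}:=\WL(\mathfrak{S})$, determine $\jaut(\mathfrak{C})$, and show that none of its algebraic fusions equals $\mathfrak{S}$. First I would identify $\mathfrak{C}=\WL(\hat S)$. Since the left translations generate a transitive group $G=\langle\hat S\rangle\le\sym{S}$, and the coherent closure of $\hat S$ contains $\underline g$ for every $g\in G$, we have $\WL(\hat S)=\WL(G)$; by Proposition~\ref{thin} this is the $2$-orbit configuration of $H:=C_{\sym{S}}(G)$, and by Proposition~\ref{WL-loop} $H=\wh{N}_\rho$ with $N_\rho$ the right nucleus. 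For a nonassociative RA-loop $L=L(G,*,g_0)$, Proposition~\ref{loop-prop}(c) gives $N_\rho=Z(L)=Z(G)\times\{0\}$; as $H$ acts semiregularly with $|S|/|Z(L)|=2|G|/|Z(G)|=8$ orbits, $\mathfrak{C}$ is a thin configuration, and by Proposition~\ref{jaut}(a) $\mathfrak{C}\cong\mathsf{Inv}(\Omega_1,H)\otimes\mathfrak{D}_8$ with $H\cong Z(G)$ abelian.

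The crucial structural input is Proposition~\ref{jaut}(b): since $\mathfrak{C}$ is the $2$-orbit configuration of the semiregular group $H$, we have $\jaut(\mathfrak{C})=\taut(\mathfrak{C})=\aaut(\mathfrak{C})\times\langle\tau\rangle$. I would combine this with two intrinsic features of $\mathfrak{S}$. First, $\mathfrak{S}$ is \emph{not} a coherent configuration: closure of $\langle\underline{\hat s}\rangle_{s\in S}$ under $\cdot$ would force $\hat u\hat v\in\hat S$ for all $u,v$, i.e.\ associativity of $\diamond$, contrary to hypothesis. Second, $\mathfrak{S}$ is \emph{not} symmetric: $\hat s=\hat s^{t}$ for all $s$ would mean $s=s^{-1}$ for every $s$, making the Moufang loop of exponent two and hence a group (cf.\ Proposition~\ref{symmetric}), again contradicting nonassociativity.

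Now suppose, for contradiction, that $\mathfrak{S}={}^{\Psi}\mathfrak{C}$ for some $\Psi\le\jaut(\mathfrak{C})$, and set $\Psi_0:=\Psi\cap\aaut(\mathfrak{C})$. Since every element of $\Psi_0$ preserves $\cdot$, the fusion $\mathfrak{C}_0:={}^{\Psi_0}\mathfrak{C}$ is a coherent configuration. If $\Psi=\Psi_0$, then $\mathfrak{S}=\mathfrak{C}_0$ is a CC, contradicting the first feature. If $[\Psi:\Psi_0]=2$, then any $\eta\in\Psi\setminus\Psi_0$ has the form $\psi_0\tau$ with $\psi_0\in\aaut(\mathfrak{C})$; as $\psi_0$ fixes every $\Psi_0$-orbit setwise, it acts trivially on the basic relations of $\mathfrak{C}_0$, so the induced action of $\eta$ on $\mathfrak{C}_0$ sends $\Psi_0(c)$ to $\Psi_0(c)^{t}$, i.e.\ is exactly transposition. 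Hence $\mathfrak{S}={}^{\langle\tau\rangle}\mathfrak{C}_0$ is the symmetrization of a CC and therefore symmetric, contradicting the second feature. Thus no algebraic fusion of $\mathfrak{C}$ equals $\mathfrak{S}$.

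The main obstacle is the reduction that justifies restricting attention to $\mathfrak{C}=\WL(\mathfrak{S})$: \emph{a priori} $\mathfrak{S}$ could be an algebraic fusion of some strictly finer coherent configuration $\mathfrak{C}'$, and an algebraic J-automorphism need not preserve the WL-closure (it respects $\star,\circ,{}^{t}$ but not $\cdot$), so the fusing group does not obviously descend from $\mathfrak{C}'$ to $\mathfrak{C}$. I would instead show that \emph{every} witness $\mathfrak{C}'$ is again covered by Proposition~\ref{jaut}: as $\mathfrak{C}'$ refines the thin scheme $\mathfrak{S}$, all its basic relations are thin; and since $\mathfrak{S}$ is homogeneous, the diagonal $1_S=\hat 1$ is a single $\Phi$-orbit, forcing $\Phi$ to permute the fibers of $\mathfrak{C}'$ transitively, whence $\mathfrak{C}'$ is a connected thin configuration, i.e.\ the $2$-orbit configuration of a semiregular group. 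Then $\jaut(\mathfrak{C}')=\taut(\mathfrak{C}')$ and the dichotomy of the previous paragraph applies verbatim with $\mathfrak{C}'$ in place of $\mathfrak{C}$, completing the proof that $\mathfrak{S}$ is \gen.
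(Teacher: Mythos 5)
Your reduction matches the paper up to a point: any witness CC must be thin, hence (via \cite[Exercise 2.7.35]{ChePon24}) the $2$-orbit configuration of a semiregular group, so Proposition~\ref{jaut}(b) applies, and with $\Psi_0:=\Psi\cap\aaut(\mathfrak{C})$ of index at most $2$ the case $\Psi=\Psi_0$ is excluded because $\mathfrak{S}$ is not a CC; your two ``intrinsic features'' are both correct. The genuine gap is in your treatment of the case $[\Psi:\Psi_0]=2$. You write $\eta=\psi_0\tau$ and claim that ``$\psi_0$ fixes every $\Psi_0$-orbit setwise,'' concluding that $\eta$ induces exactly the transposition on the basic relations of $\mathfrak{C}_0={}^{\Psi_0}\mathfrak{C}$ and hence that $\mathfrak{S}$ would be the symmetrization of a CC. There is no basis for this: $\psi_0=\eta\tau$ lies in $\aaut(\mathfrak{C})$ but in general \emph{not} in $\Psi$, so nothing forces it to stabilize $\Psi_0$-orbits. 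The $\Psi$-orbit of $c$ is $\Psi_0(c)\cup\Psi_0(\psi_0(c^t))$, which is the symmetrization of $\Psi_0(c)$ only in the special case where $\psi_0$ preserves the $\Psi_0$-orbits. The paper's own rank-$3$ example on $\Omega=\{1,2\}$ refutes your dichotomy outright: there $\mathfrak{C}$ is the discrete CC, $\Psi=\langle\phi\rangle$ with $\phi$ an anti-automorphism (so $\Psi_0$ is trivial and $[\Psi:\Psi_0]=2$), yet the fusion is a homogeneous \emph{non-symmetric} Jordan scheme, not the symmetrization of $\mathfrak{C}_0=\mathfrak{C}$; here $\psi_0$ swaps the two diagonal singleton orbits. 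So ``not a CC and not symmetric'' cannot by itself rule out fusions with $[\Psi:\Psi_0]=2$, and your proof collapses exactly where nonassociativity must be exploited more deeply.

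The paper closes this case by counting rather than by symmetry. Since $\Psi_0$ is normal of index $2$, each $\hat{s}$ is a union of at most two basic relations of $\mathfrak{C}_0$ of equal size, and since $\hat{S}\sqsubseteq C_0$ is a proper refinement, some $\hat{s}$ splits as $c_1\cup c_2$ with $|c_1|=|c_2|=|S|/2$; these relations are non-regular, forcing $\mathfrak{C}_0$ to be non-homogeneous with \emph{all} basic relations of size $|S|/2$. On the other hand, the coherent closure $\mathfrak{T}=(S,T)$ of $\mathfrak{S}$ satisfies $\hat{S}\sqsubset T\sqsubseteq C_0$, and by Propositions~\ref{thin}, \ref{WL-loop} and~\ref{loop-prop}(c) its basic relations are the $2$-orbits of $\widehat{N}_\rho$ with $N_\rho=Z(S)$, each of size $|N_\rho|=|S|/8$; hence every basic relation of $C_0$ has size at most $|S|/8<|S|/2$ --- a contradiction. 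Ironically, your first paragraph computes precisely this closure (including the $8$ fibers and $|N_\rho|=|S|/8$) and then never uses it; inserting that computation at the point where your symmetrization claim fails is exactly what repairs the argument.
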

	\begin{proof}
		By way of contradiction, assume $\mathfrak{S}$ is the algebraic fusion of a coherent configuration $\mathfrak{C}=(S,C)$ with respect to a certain subgroup $\Phi\leq\jaut(\mathfrak{C})$. Then $\hat{s} =\Phi(c)$ for every $s\in S$ and $c\in C$ contained in $\hat{s}$. In particular, this implies that each $c\in C$ is thin. By \cite[Exercise 2.7.35]{ChePon24}, every such configuration is Schurian, i.e.\ there exists $H\leq\sym{S}$ such that $C$ is the set of $2$-orbits of $H$. Since every $2$-orbit of $H$ is thin, we have that $H$ is semiregular. Thus by Proposition~\ref{jaut}, $\jaut(\mathfrak{C})=\taut(\mathfrak{C})$. Hence the subgroup $\Phi_0:=\Phi\cap\aaut(\mathfrak{C})$ has index at most two in $\Phi$. Moreover, since $\Phi_0\leq \aaut(\mathfrak{C})$, the rainbow ${}^{\Phi_0}\mathfrak{C}=(S,C_0)$ where $C_0:=\{\Phi_0(c)\}_{c\in C}$ is a  coherent configuration. 
        Thus $\hat{S}={}^{\Phi}C\sqsubseteq C_0 \sqsubseteq C$. Since  $\mathfrak{S}$ is not a \CC, the inclusion $\hat{S}\sqsubseteq C_0$ must be proper, and $[\Phi:\Phi_0]=2$. 
        In particular, $\Phi_0$ is normal in $\Phi$, which in turn implies that each $\Phi$-orbit on $C$ is either a $\Phi_0$-orbit or a disjoint union of two $\Phi_0$-orbits of equal size.  
        Therefore, each $\hat{s}\in\hat{S}$ is either a basic relation of ${}^{\Phi_0}\mathfrak{C}$ or the union of two basic relations of ${}^{\Phi_0}\mathfrak{C}$ of equal size. Since ${}^{\Phi_0}\mathfrak{C}$ and ${}^{\Phi}\mathfrak{C}$ are distinct, there exists  $\hat{s}\in\hat{S}$ which splits into a disjoint union of two basic relations of ${}^{\Phi_0}\mathfrak{C}$ of equal size, say $\hat{s}=c_1\cup c_2$. It follows from $|c_1|=|c_2|=|S|/2$ that both $c_1$ and $c_2$ are nonregular basic relations of ${}^{\Phi_0}\mathfrak{C}$. Therefore ${}^{\Phi_0}\mathfrak{C}$ is a nonhomogeneous \CC, that is, each of its basic relations is nonregular. This in turn implies that all basic relations of ${}^{\Phi_0}\mathfrak{C}$ have size $|S|/2$.

		 Observe that the coherent closure $\mathfrak{T}=(S,T)$ of $\mathfrak{S}$ satisfies $\hat{S}\sqsubset T \sqsubseteq C_0$. By Proposition~\ref{WL-loop}, the basic relations of $T$ coincide with the $2$-orbits of the right nucleus $N_\rho$ of the loop $(S,\diamond)$. In particular, $|t|=|N_\rho|$ for each $t\in  T$. It follows from Proposition~\ref{loop-prop} that $N_\rho = Z(S)$, and therefore $|N_\rho|=|S|/8$. Since each $t\in T$ is the union of basic relations of $C_0$, a basic relation $c\in C_0$ has cardinality at most $|S|/8$, contrary to $|c|=|S|/2$. As this is an obvious contradiction, the proof is complete.
	\end{proof}

    \section*{Acknowledgments}
    The author MM wishes to thank Villanova University for the hospitality shown to him during the final stages of writing this paper. Additionally, the authors are grateful to R.~Lipyanski who informed them of the paper~\cite{Sco57}, and to Misha Klin for helpful discussions and suggestions. Finally, the authors acknowledge the many thoughtful comments made by the referees which greatly enhanced the quality of our paper.

\bibliographystyle{habbrv}
\bibliography{mainArXiv}

\end{document}